\title{Groups of F-Type}
\author[B. Fine, G. Rosenberger and L. Wienke]{Benjamin Fine, Gerhard Rosenberger and Leonard Wienke}
\address{Department of Mathematics, University of Hamburg, Bundesstraße 55, 20146 Hamburg, Germany}
\email{gerhard.rosenberger@uni-hamburg.de}
\address{Department of Mathematics, University of Bremen, Bibliothekstraße 5, 28359 Bremen, Germany}
\email{wienke@uni-bremen.de}
\DeclareMathOperator{\rank}{rank}
\DeclareMathOperator{\tr}{tr}
\DeclareMathOperator{\PSL}{PSL}
\DeclareMathOperator{\vcd}{vcd}
\DeclareRobustCommand{\gobblefour}[4]{}
\begin{document}

\begin{abstract}We consider a class of groups, called groups of F-type, which includes some known and important classes like Fuchsian groups of geometric rank $\ge 3$, surface groups of genus $\ge 2$, cyclically pinched one-relator groups and torus-knot groups, and prove algebraic and geometric properties of these groups.
\end{abstract}
\keywords{group presentations, groups of F-type, large groups, virtually torsion-free, coherent, conjugacy separable, subgroup separable, SQ-universal, Tits alternative, essential representations, free products with amalgamation, Euler characteristic.}
\maketitle
\section{Introduction} A group $G$ is of \textit{of F-type}, if it admits a presentation of the following form
\[G=\langle a_1,\dots, a_n\mid a_1^{e_1}=\dots=a_n^{e_n}=U(a_1,\dots,a_p)V(a_{p+1},\dots,a_n)=1\rangle\]
where $n\ge 2, e_i=0$ or $e_i\ge 2$, for $i=1,\dots, n$, $1\le p\le n-1$, $U(a_1,\dots, a_p)$ is a cyclically reduced word in the free product on $a_1,\dots,a_p$ which is of infinite order and $V(a_{p+1},\dots,a_n)$ is a cyclically reduced word in the product on $a_{p+1},\dots,a_n$ which is of infinite order. With $p$  understood we write $U$ for $U(a_1,\dots,a_p)$ and $V$ for $V(a_{p+1},\dots, a_n)$. 

Now, if $U=a_1^{\pm 1}$ then $e_1$ must equal zero since we assume that $U$ has infinite order. In this case, $G$ reduces to
\[ G=\langle a_1,\dots, a_n\mid a_2^{e_2}=\dots=a_n^{e_n}=1\rangle \]
which is a free product of cyclic groups. Therefore if $p=1$ (or $p=n-1$) we restrict groups of F-type to those where $U=a_1^m$ (or $V=a_n^m$) with $m\ge 2$.

It follows then that in all cases the group $G$ decomposes as a non-trivial free product with amalgamation:
\[G=G_1\Asterisk_A G_2\]
with $G_1\neq A\neq G_2$ where the factors are free products of the cyclics
\[ G_1=\langle a_1,\dots, a_p\mid a_1^{e_1}=\dots=a_p^{e_p}=1\rangle,\]
\[ G_2=\langle a_{p+1},\dots, a_n\mid a_{p+1}^{e_{p+1}}=\dots=a_n^{e_n}=1\rangle,\]
and
\[A=\langle U^{-1}\rangle =\langle V\rangle.\]

We first mention the observation that a group of F-type is \textit{coherent}, that is, finitely generated subgroups are finitely presented. Observe that finitely generated subgroups of $G_1$ and $G_2$, respectively, are finitely related and every subgroup of $A$ is finitely generated, in fact cyclic because $A$ is cyclic. Hence, $G$ is coherent by \cite{128}.

In what follows we make a further restriction. Suppose $UV$ omits some generator. For instance, suppose that $UV$ does not involve $a_1$. Then $G$ is a free product $H_1\ast H_2$, where \[H_1=\langle a_1\mid a_1^{e_1}=1\rangle\] and \[H_2=\langle a_2,\dots, a_n\mid a_2^{e_2}=\dots =a_n^{e_n}=UV=1\rangle.\]
This does not affect the validity of the upcoming results in the next sections. Hence, we may assume that $UV$ involves all the generators. One could assume that it would be convenient to have $U$ and $V$ of minimal length in their respective orbits under Nielsen transformations but unfortunately in doing so, we eventually lose the property that $G_1$ and $G_2$ are free products on the sets of the new generators, especially if $e_i\ge 2$ for some $i$. It is important for our purposes that the factors $G_1$ and $G_2$ are free products on exactly the given generators, respectively.

In the following sections we consider essentially faithful representations in general and for groups of F-type and discuss many consequences of this. Then we classify the hyperbolic groups of F-type. The hyperbolic groups of F-type have a faithful representation into $\PSL(2,\mathbb{R})$ which gives interesting consequences for groups of F-type. Then we give some further algebraic properties and finally some quotients of groups of F-type.

This paper is in most parts a survey but contains also new results. The new results often appear in combination with already known results and are given together in one statement. These include for instance Theorem 3.2, Corollaries 3.3 and 3.5, and Theorems 4.17 to 4.19. Many results have appeared in different publications but it would be convenient to have the known results in just one place. The reason is that groups of F-type still form an interesting class of groups, especially in connection with covering space theory and algebraic topology. We have mentioned some of the many interesting examples in the abstract of this paper. As background for the used notations, definitions, and results one may take \cite{0}, \cite{149}, and \cite{152}. Groups of F-type were originally introduced in \cite{78} and \cite{81}. A reasonable discussion can be found in \cite{82}.

Sadly, our coauthor and friend Ben Fine died during the final preparation of the paper. The remaining two authors dedicate the paper to the memory of Ben and to all the work that he inspired.

\section{Essential Representations and Algebraic Consequences}

We say that a linear representation $\rho$ over a field of characteristic 0 is an \textit{essentially faithful representation} if $\rho$ is finite-dimensional with torsion-free kernel. In the following, we give some historic remarks on the notion of essentially faithful representations. The term was introduced in 1996 by B.~Fine (see \cite{E1}). Earlier, in 1985, he introduced the weaker concept of \textit{essential representations}. Suppose $G$ is a group with presentation \[G=\langle a_1,\dots, a_n\mid a_1^{e_1}=\dots=a_n^{e_n}=R_1^{m_1}=\dots=R_k^{m_k}=1\rangle\]
where $e_i=0$ or $e_i\ge 2$ for $i=1,\dots, n$, $m_j\ge 1$ for $j=1,\dots,k$, and each $R_j$ is a cyclically reduced word in the free product of the cyclic groups $\langle a_1\rangle, \dots, \langle a_m\rangle$ of syllable length at least two.

A representation $\rho\colon G\to \text{Linear Group}$ over a field of characteristic zero is an essential representation if for each $i$ the image $\rho(a_i)$ is of infinite order if $e_i=0$ or of order $e_i$ if $e_i\ge2$, and for each $j$ the image $\rho(R_j)$ has order $m_j$. This last term was used by B.~Fine, J.~Howie, R.~Hidalgo, N.~Kopteva, F.~Levin, G.~Rosenberger, R.~Thomas, E.\,B.~Vinberg and others in their work on generalized modular groups, generalized triangle groups, generalized tetrahedron groups, one-relator quotients of free products with amalgamation, groups of special NEC-type and Coxeter groups. In their paper \cite{E2}, G.~Baumslag, J.\,W.~Morgan and P.~Shalen described this phenomenon for generalized triangle groups as special representations.

We first give some general statements and show then that a group of F-type has an essentially faithful representation into $\PSL(2,\mathbb{C})$.

\begin{prop}Let $G$ be a finitely generated group. Then $G$ admits an essentially faithful representation if and only if $G$ is virtually torsion-free.
\end{prop}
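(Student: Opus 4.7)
The plan is to prove the two directions separately.

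For the direction that virtually torsion-free implies the existence of an essentially faithful representation, I would start with a torsion-free subgroup $H$ of finite index in $G$, and replace it by its normal core $N$, which is normal of finite index and still torsion-free (being a subgroup of $H$). The quotient $G/N$ is finite, so it embeds faithfully into $\mathrm{GL}(m,\mathbb{Q})$ for some $m$ via its (left) regular representation. Composing with the quotient $G\to G/N$, I obtain a finite-dimensional $\mathbb{Q}$-representation of $G$ whose kernel is exactly $N$, hence torsion-free. That gives the essentially faithful representation.

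For the converse, let $\rho\colon G\to \mathrm{GL}(n,k)$ be an essentially faithful representation with $k$ a field of characteristic $0$ and $\ker\rho$ torsion-free. Since $G$ is finitely generated, the image $\rho(G)$ is a finitely generated linear group in characteristic $0$. I would then apply Selberg's lemma to obtain a torsion-free subgroup $K$ of finite index in $\rho(G)$. The preimage $\rho^{-1}(K)$ then has finite index in $G$, and the main remaining point is to check that it is torsion-free: if $g\in \rho^{-1}(K)$ is a torsion element, then $\rho(g)\in K$ is torsion, so $\rho(g)=1$ since $K$ is torsion-free; hence $g\in\ker\rho$, and since $\ker\rho$ is torsion-free, $g=1$. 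This produces a torsion-free finite-index subgroup of $G$.

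The essential input beyond elementary group theory is Selberg's lemma; the rest is bookkeeping with normal cores, preimages, and the observation that an essentially faithful representation interacts well with torsion because of the two torsion-free conditions (torsion-free kernel on one side, torsion-free finite-index subgroup on the other). I expect no substantial obstacle beyond invoking Selberg's lemma, which is why the statement is phrased as an equivalence: both directions are structurally transparent.
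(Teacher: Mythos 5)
Your proposal is correct and follows essentially the same route as the paper: Selberg's lemma applied to the finitely generated linear image for one direction, and the normal core together with a faithful representation of the finite quotient for the other. The only cosmetic difference is that you name the regular representation of $G/N$ explicitly where the paper merely asserts the existence of a faithful finite-dimensional representation of the finite quotient.
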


\begin{proof}Suppose $G$ is finitely generated and $\rho\colon G\to \text{Linear Group}$ is an essentially faithful representation. Since $G$ is finitely generated, $\rho(G)$ is a finitely generated linear group. From a result of Selberg, see \cite{200}, $\rho(G)$ is then virtually torsion-free. Let $H$ be a torsion-free normal subgroup of $\rho(G)$ of finite index and let $H^\ast$ be the pullback of $H$ in $G$. $H^\ast$ has finite index in $G$.

If $g\neq 1$ has finite order then $\rho(g)$ has exactly the same order since $\rho$ has torsion-free kernel. Therefore $g$ cannot be in $H^\ast$ since its image would then be an element of finite order in the torsion-free group $H$. Thus $H^\ast$ must be torsion-free and $G$ is virtually torsion-free.

Conversely, suppose $G$ is virtually torsion-free. Let $H$ be a torsion-free subgroup of $G$ of finite index. The intersection of the conjugates of $H$ in $G$ is a normal subgroup of finite index. Hence, $G$ must contain a torsion-free normal subgroup $H^\ast$ of finite index. Choose a faithful finite-dimensional representation $\rho^\ast$ of the finite group $G/H^\ast$. The composition of this with the natural homomorphism from $G$ to $G/H^\ast$ will give the desired representation.
\end{proof}

This has two immediate relations to the following.

\begin{prop}\label{prop2.2}\begin{enumerate}
\item[1.] Let $G$ be a finitely generated group. Then $G$ is residually finite if and only if for each $g\in G$, $g\neq 1$, there exists a non-trivial linear representation $\rho$ of $G$ with $\rho(G)\neq \{1\}$.
\item[2.] Let $G$ be a finitely generated group with a balanced representation
\[ G=\langle a_1,\dots, a_n\mid R_1^{m_1}=\dots=R_n^{m_n}=1\rangle \]
where each $R_i$ is a non-trivial cyclically reduced word in the free product on $\{a_1,\dots,a_n\}$. If at least one $m_j\ge 2$, then $G$ is non-trivial.
\end{enumerate}
\end{prop}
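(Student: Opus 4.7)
For part 1, I read the statement as ``for every $g \ne 1$ there is a finite-dimensional linear representation $\rho$ of $G$ over a characteristic-zero field with $\rho(g) \ne 1$,'' since the literally written condition ``$\rho(G) \ne \{1\}$'' does not depend on $g$ and would be trivial. Both directions are then quick. If $G$ is residually finite, then for each $g \ne 1$ I pick a finite-index normal subgroup $N$ of $G$ with $g \notin N$ and compose the quotient $G \to G/N$ with any faithful representation of the finite group $G/N$ (its left-regular representation, say) to get the desired $\rho$. Conversely, $\rho(G)$ is a finitely generated linear group in characteristic zero, hence residually finite by Malcev's theorem (the same reference \cite{200} quoted in Proposition~1); a finite-index normal subgroup of $\rho(G)$ avoiding $\rho(g)$ pulls back to one in $G$ avoiding $g$.

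For part 2 I pass to the abelianization. The relation $R_i^{m_i}=1$ abelianizes to $m_i\sigma(R_i)=0$ in $\mathbb{Z}^n$, where $\sigma(R_i)$ is the exponent-sum vector of $R_i$. Let $M$ be the $n\times n$ integer matrix whose $i$-th row is $m_i\sigma(R_i)$, and $\Sigma$ the analogous matrix for the $\sigma(R_i)$'s themselves; then $M=\mathrm{diag}(m_1,\dots,m_n)\,\Sigma$, and $G^{ab}\cong\mathbb{Z}^n/M\mathbb{Z}^n$, so $\det M=\bigl(\prod_i m_i\bigr)\det\Sigma$. I split cases. If $\det M=0$ (which includes the possibility that some $m_i=0$ or that $\Sigma$ is singular), then $G^{ab}$ has positive free rank and $G$ is infinite. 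If $\det M\ne 0$, then all $m_i$ are nonzero and $|\det\Sigma|\ge 1$, whence $|G^{ab}|=|\det M|\ge\prod_i m_i\ge m_j\ge 2$. In both cases $G^{ab}\ne 1$, so $G\ne 1$.

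The only real subtlety is in part 2, and it is precisely where the hypothesis that some $m_j\ge 2$ enters: without it, the non-singular case could yield a trivial abelianization, and the argument would detect nothing. Part 1 is essentially a packaging of Malcev's theorem together with faithful representations of finite groups, so I expect that direction to be routine.
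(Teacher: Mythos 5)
Your proof is correct and follows the paper's approach: for part 2 both arguments pass to the abelianization, the paper reducing (informally) to $n=2$ and observing that adjoining $R_2^{m_2}$ with $m_2\ge 2$ to an abelian group of rank $\ge 1$ leaves it non-trivial, while you run the same computation uniformly in $n$ via the determinant of the relation matrix, which is if anything more complete. For part 1 the paper writes only ``there is nothing to show,'' so your argument via Malcev's theorem and the regular representation of a finite quotient (together with the sensible reading of $\rho(G)\neq\{1\}$ as $\rho(g)\neq 1$; note only that the residual finiteness of finitely generated linear groups is Malcev's result rather than the Selberg reference \cite{200}, which is the virtually torsion-free statement) simply supplies the routine details the paper omits.
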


For Proposition \ref{prop2.2}.1 there is nothing to show. It is a consequence of the Theorem of A.~Malcev, see \cite{E3}. We just mention this for completeness.

We now give a proof for Proposition \ref{prop2.2}.2.

\begin{proof}Assume that $m_n\ge 2$. Consider the group \[G^\ast=\langle a_1,\dots, a_n\mid R_1^{m_1}=\dots=R_{n-1}^{m_{n-1}}=1\rangle\] where $m_1,\dots, m_{n-1}\ge 1$. Its Abelianization has torsion-free rank at least 1. If we adjoin $R_n^{m_n}$ with $m_n\ge 2$ to the Abelianization, the resulting Abelian group is non-trivial.
\end{proof}

We now show that groups of F-type admit an essentially faithful representation into $\PSL(2,\mathbb{C})$. More concretely, we show the following.

\begin{thm}\label{theo3}Let $G$ be a group of F-type. Then $G$ has a representation $\rho\colon G\to \PSL(2,\mathbb{C})$ such that $\rho|_{G_1}$ and $\rho|_{G_2}$ are faithful.

Further, if neither $U$ nor $V$ is a proper power then $G$ has a faithful representation in $\PSL(2,\mathbb{C})$.
\end{thm}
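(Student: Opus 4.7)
The plan is to build $\rho$ by first constructing faithful representations $\rho_1\colon G_1\to\PSL(2,\mathbb{C})$ and $\rho_2\colon G_2\to\PSL(2,\mathbb{C})$ on the two amalgam factors, then gluing them along the cyclic subgroup $A$ via the universal property of the amalgamated free product. Since $G_1$ and $G_2$ are free products of cyclics (finite or infinite cyclic), each is a classical Fuchsian group and has well-known faithful discrete embeddings into $\PSL(2,\mathbb{R})\subset\PSL(2,\mathbb{C})$ — one can for instance realize each as a subgroup of the modular group or construct the embedding generator-by-generator, sending $a_i$ with $e_i\geq 2$ to an elliptic element of order $e_i$ and $a_i$ with $e_i=0$ to a hyperbolic element, then arranging the generators in general position so that the Klein-Maskit combination theorem (or a direct ping-pong argument) gives faithfulness.

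The next step is the matching step. Because $\rho_1$ is faithful and $U$ has infinite order, $\rho_1(U^{-1})$ is a non-torsion element of $\PSL(2,\mathbb{C})$, hence hyperbolic or parabolic; the same holds for $\rho_2(V)$. Up to conjugation in $\PSL(2,\mathbb{C})$ I may assume both are in diagonal (or in the parabolic case, unipotent) normal form with the same fixed point set. By continuously deforming the representations — exploiting the freedom in the choice of translation length or trace of the generators of $G_1$ and $G_2$, while keeping them faithful — I can arrange $\rho_1(U^{-1})=\rho_2(V)$ as elements of $\PSL(2,\mathbb{C})$. This is the technical heart of the argument: I need to check that the map from representation parameters to the trace (or translation length) of $\rho_i(U)$ and $\rho_i(V)$ is surjective onto an interval large enough for the two sides to meet. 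Once $\rho_1$ and $\rho_2$ agree on $A=\langle U^{-1}\rangle=\langle V\rangle$, the universal property of $G_1\Asterisk_A G_2$ yields $\rho\colon G\to\PSL(2,\mathbb{C})$ with $\rho|_{G_i}=\rho_i$ faithful, proving the first assertion.

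For the second assertion, suppose neither $U$ nor $V$ is a proper power. Then $\langle U\rangle$ is a \emph{maximal} infinite cyclic subgroup of $G_1$: any cyclic subgroup of $G_1$ properly containing $\langle U\rangle$ would give $U=W^k$ with $k\geq 2$, contradicting the hypothesis. Similarly $\langle V\rangle$ is maximal cyclic in $G_2$. This is precisely the malnormality/edge-group condition needed for a Bass–Serre or combination-theorem style ping-pong: with the representations chosen generically (and, if necessary, after a further small deformation to ensure no coincidental fixed points between $\rho_1(G_1)$ and $\rho_2(G_2)$ other than those of $\rho(A)$), the images of $G_1$ and $G_2$ play ping-pong on their respective limit sets in $\partial\mathbb{H}^3$ along the common axis (or parabolic fixed point) of $\rho(A)$, and the standard normal-form argument for amalgamated free products shows that any non-trivial reduced word in $G_1\Asterisk_A G_2$ maps to a non-trivial element of $\PSL(2,\mathbb{C})$.

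The main obstacle I expect is the matching step: not merely identifying the \emph{conjugacy classes} of $\rho_1(U^{-1})$ and $\rho_2(V)$, which is easy, but verifying that the parameter spaces of faithful representations of $G_1$ and $G_2$ are rich enough that the trace (equivalently, translation length) of the image of $U$ respectively $V$ can be made to coincide while faithfulness is preserved. The hypothesis that neither $U$ nor $V$ is a proper power is essential precisely here — without it, the cyclic subgroup $A$ would sit inside a strictly larger cyclic subgroup on one side, and no generically chosen combination of faithful $\rho_1$ and $\rho_2$ could avoid collapsing this larger cyclic group onto $\rho(A)$, destroying faithfulness of $\rho$.
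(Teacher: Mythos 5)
Your proposal follows essentially the same route as the paper: construct faithful representations of the free-product-of-cyclics factors $G_1,G_2$, use the freedom in the representation parameters (the paper phrases this as positive dimension of the character variety and chooses the common diagonal entry transcendental, where you phrase it as deforming Fuchsian representations until the traces of $\rho_1(U^{-1})$ and $\rho_2(V)$ agree) to match them on $A$, glue by the universal property, and then obtain faithfulness in the non-proper-power case from the fact that no element of $G_i\setminus\langle U\rangle$ (resp.\ $\langle V\rangle$) shares a fixed point with $\rho(A)$, via the normal-form/ping-pong argument. The only slip is in your closing paragraph: the proper-power hypothesis is not needed for the matching step (the first assertion holds for all groups of F-type), but only, as you correctly argue earlier, for the ping-pong that upgrades faithfulness on the factors to faithfulness on all of $G$.
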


\begin{proof}Choose faithful representations \[\rho_1\colon G_1\to \PSL(2,\mathbb{C})\text{ and }\rho_2\colon G_2\to \PSL(2,\mathbb{C})\] such that \[\rho_1(U^{-1})=\begin{pmatrix}
t_1 & 0\\
0& t_1^{-1}
\end{pmatrix}\text{ and } \rho_2(V)=\begin{pmatrix}
t_2 & 0\\
0& t_2^{-1}
\end{pmatrix}\]

where $t_1$ and $t_2$ are transcendental. This may be done since $U$ and $V$ both have infinite order.

If both $G_1$ and $G_2$ are cyclic, then we may choose $\rho(G)$ to be a cyclic group. Now, let $G_1$ or $G_2$ be non-cyclic. Then there exists an irreducible representation from $G$ in $\PSL(2,\mathbb{C})$. We have that each of the $n$ matrices have at least two degrees of freedom with the trace and determinant being specified.

Therefore, from the work of M.~Culler and P.~Shalen, see \cite{E4}, or also from \cite{E5} and \cite{E6} in a different setting, the dimension of the character space, that is, the representation space modulo conjugation, of $G$ in $\PSL(2,\mathbb{C})$ is at least $2n-1-3=2n-4$ which is positive for $n\ge 3$. Here, the $-1$ represents a possible additional conjugation of $G_1$ or $G_2$ using the fundamental theorem of algebra. This, especially, implies that we may choose $\rho_1$ and $\rho_2$ such that $t_1=t_2$. Now define $\rho\colon G\to\PSL(2,\mathbb{C})$ via $\rho_i=\rho|_{G_i}$ for $i=1,2$.

This gives the desired representation of the theorem. Further if neither $U$ nor $V$ is a proper power, the above construction leads to the existence of a faithful representation of $G$ because $\rho(g)$, $g\in G_1\setminus\langle U\rangle$, and $\rho(U)$ have no common fixed point, considered as a linear fractional transformation, analogously for $G_2$ and $V$. This gives the result that $G$ has a faithful representation in $\PSL(2,\mathbb{C})$.\end{proof}

We give the following consequence of Theorem \ref{theo3}.

\begin{cor}\label{cor2.4}Let $G$ be a group of F-type. Then $G$ admits an essentially faithful representation into $\PSL(2,\mathbb{C})$.
\end{cor}

We note that if both $U$ and $V$ are proper powers then there is no faithful representation in $\PSL(2,\mathbb{C})$. If $U=U_1^\alpha$, $\alpha\ge 2$, and $V=V_1^\beta$, $\beta\ge 2$, then $\rho(U_1)$ and $\rho(V_1)$ commute but $U_1$ and $V_1$ do not commute.

If $A$ and $B=A^k$, $|k|\ge 2$, are elements of infinite order in $\PSL(2,\mathbb{C})$, then they have the same fixed points, considered as linear fractional transformations, and hence commute.

In the following corollary, we describe some straightforward algebraic consequences which we get from the essentially faithful representation of a group of F-type into $\PSL(2,\mathbb{C})$ in Corollary \ref{cor2.4}.

\begin{cor}\label{cor2.5} Let $G$ be a group of F-type. Then
\begin{enumerate}
\item[1.] $G$ is virtually torsion-free.
\item[2.]If neither $U$ nor $V$ is a proper power then $G$ is residually finite and thus Hopfian.
\item[3.]\begin{enumerate}
\item[(i)]If $e_i\ge 2$ then $a_i$ has order exactly $e_i$.
\item[(ii)]Any element of finite order in $G$ is conjugate to a power of some $a_i$.
\item[(iii)]Any finite subgroup is cyclic and conjugate to a subgroup of some $\langle a_i\rangle$.
\item[(iv)] Any Abelian subgroup is cyclic or free Abelian of rank 2.
\end{enumerate}
\end{enumerate}
\end{cor}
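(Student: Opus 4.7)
The overall strategy is to combine Theorem~\ref{theo3} with the Bass--Serre structure of $G=G_1\ast_A G_2$, where $A=\langle U^{-1}\rangle=\langle V\rangle$ is infinite cyclic and hence torsion-free. The workhorse is the standard torsion theorem for amalgamated free products with torsion-free amalgam: every element of finite order in $G$ is conjugate into $G_1$ or $G_2$, and every finite subgroup of $G$ is conjugate into one of the two factors.

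For part 1, the plan is first to upgrade the representation $\rho$ of Theorem~\ref{theo3} to an essentially faithful one. If $g\in G$ has finite order, then $g$ is conjugate to some $h\in G_i$ of the same finite order by the torsion theorem; since $\rho|_{G_i}$ is faithful, $\rho(g)$ has the same order as $h$ and is in particular non-trivial. Thus $\ker\rho$ is torsion-free and $\rho$ is essentially faithful, so the first proposition of this section yields virtual torsion-freeness. Part 2 is then immediate: under the hypothesis that neither $U$ nor $V$ is a proper power, Theorem~\ref{theo3} supplies a faithful representation of $G$ into $\PSL(2,\mathbb{C})$, exhibiting $G$ as a finitely generated linear group over $\mathbb{C}$; hence $G$ is residually finite by Mal'cev, and being finitely generated residually finite, is Hopfian (again by Mal'cev).

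For part 3, item (i) is direct: $a_i$ lies in some factor $G_j$ in which it has order exactly $e_i$, and faithfulness of $\rho|_{G_j}$ transfers this equality to $G$. Items (ii) and (iii) combine the torsion theorem with the fact that in a free product of cyclic groups every torsion element is conjugate to a power of one of the generators and every finite subgroup is cyclic and conjugate into a single cyclic factor. For (iv) I would analyse an Abelian subgroup $H\le G$ via its action on the Bass--Serre tree $T$ of the splitting. Either $H$ fixes a vertex, in which case $H$ embeds into some $G_j$ and is cyclic, since the centralizer of any non-trivial element in a free product of cyclic groups is cyclic; or $H$ contains a hyperbolic element $h$, and commutativity forces every element of $H$ to preserve the axis of $h$, so that $H$ sits inside an extension of $\mathbb{Z}$ by a subgroup of an edge stabiliser (a conjugate of the cyclic $A$) and is therefore free Abelian of rank at most 2.

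The main obstacle is expected to be (iv): the axis-and-translation-length bookkeeping on the tree requires care, especially to confirm that the extension above really is free Abelian and of rank at most two, using torsion-freeness of $A$ and the structure of centralizers in the vertex groups. The remaining parts are relatively direct applications of Theorem~\ref{theo3} together with well-known results on amalgamated free products and free products of cyclics.
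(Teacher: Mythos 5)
The paper states this corollary without proof, as a ``straightforward algebraic consequence'' of Theorem~\ref{theo3}, and your argument follows exactly the intended route: upgrade $\rho$ to an essentially faithful representation via the torsion theorem for $G_1\Asterisk_A G_2$ (so part 1 follows from the first proposition of Section 2), get parts 2 and 3(i)--(iii) from faithfulness on the factors, Mal'cev, and the standard conjugacy statements for torsion in amalgams and in free products of cyclics. The one point to shore up is in (iv): for a possibly infinitely generated Abelian subgroup $H$ the dichotomy ``$H$ fixes a vertex or $H$ contains a hyperbolic element'' is not exhaustive --- there is a third case in which every element of $H$ is elliptic but $H$ only fixes an end of the Bass--Serre tree, so that $H$ is an ascending union of cyclic subgroups each lying in an edge stabilizer (a conjugate of $A$). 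One must rule out a non-cyclic locally cyclic group here; this follows because a nontrivial element of $A$ has only boundedly divisible roots in $G$ (roots are controlled by the maximal roots $U_0$, $V_0$ of $U$, $V$ in the free products of cyclics $G_1$, $G_2$), so the ascending chain stabilizes and $H$ is cyclic. With that case added, the proof is complete and agrees with the authors' intent.
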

We remark that Corollary \ref{cor2.5}.1 is follows directly from Corollary \ref{cor2.4}. Corollary \ref{cor2.5}.2 is a consequence of A.~Malcev's Theorem \cite{E3} and Corollary \ref{cor2.5}.3 follows straightforward from the Nielsen cancellation method in free products with amalgamation as described in \cite{0}, see also Theorem 3.2 for a more general situation.

The next result (Corollary \ref{cor2.6}) is concerned with the Tits alternative, that is, a group satisfies the Tits alternative if it either contains a free subgroup of rank 2 or is virtually solvable. We use for the Tits alternative some simple facts.

\begin{rem}\label{rem2.6}
\begin{enumerate}
\item[1.] A subgroup $H$ of $\PSL(2,\mathbb{C})$ is \textit{elementary} if the commutator of any two elements of infinite order has trace 2; or equivalently, $G$ is elementary if any two elements of infinite order, regarded as linear fractional transformations, have at least one common fixed point. A non-elementary subgroup contains a free subgroup of rank two. Hence, if $G$ is a group of F-type and $\rho\colon G\to \PSL(2,\mathbb{C})$ an essentially faithful representation such that $\rho(G)$ is non-elementary, then $G$ has a free subgroup of rank 2.
\item[2.] If $G=\langle a_1, a_2\mid a_1^{p}=a_2^q=1\rangle$ with $2\le p,q$ and $p+q\ge 5$ then $G$ contains a free subgroup of rank 2.
\end{enumerate}
\end{rem}
More details can be found in \cite{0}. We now give the following reasoning. In many cases we get representations $\rho\colon G\to \PSL(2,\mathbb{C})$ with $\rho(G)$ non-elementary. Then we have Corollary~\ref{cor2.6}. If each time $\rho(G)$ is elementary then we may apply Remark \ref{rem2.6}.2. or we may consider factor groups. For instance, let \[G=\langle a,b,c\mid b^2=c^2=a^s(bc)^t=1\rangle\] with $s>2$ or $t\ge 2$. Then $G$ has the factor group
\[\overline G=\langle a,b,c\mid a^s=b^2=c^2=(bc)^t=1\rangle\] which certainly has a free subgroup of rank 2. In an analogous manner, we may consider the remaining groups.

\begin{cor}\label{cor2.6}Let $G$ be a group of F-type. Then either $G$ has a free subgroup of rank 2 or is solvable and isomorphic to a group with one of the following representations:
\begin{enumerate}
\item[(1)]$H_1=\langle a,b \mid a^2b^2=1\rangle$,
\item[(2)]$H_2=\langle a,b,c \mid a^2=b^2=abc^2=1\rangle$, and
\item[(3)]$H_3=\langle a,b,c,d \mid a^2=b^2=c^2=d^2=abcd=1\rangle$.
\end{enumerate}
\end{cor}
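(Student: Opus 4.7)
Assume $G$ contains no free subgroup of rank $2$; the goal is to conclude that $G\cong H_1$, $H_2$, or $H_3$.

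First I would cut down the possibilities for the two factors $G_1$ and $G_2$. Both are subgroups of $G$, so neither contains a rank-$2$ free subgroup. Combined with the structure of each $G_i$ as a free product of cyclics and the fact that $U\in G_1$, $V\in G_2$ have infinite order, a short check shows that each $G_i$ is either $\mathbb{Z}$ or the infinite dihedral group $D_\infty=\mathbb{Z}/2\ast\mathbb{Z}/2$. The ingredients are: fact $2$ (ruling out $\mathbb{Z}/p\ast\mathbb{Z}/q$ with $p+q\ge 5$); an easy Euler-characteristic or Reidemeister--Schreier argument showing $\mathbb{Z}\ast\mathbb{Z}/q$ has an index-$q$ free subgroup of rank $q\ge 2$ when $q\ge 2$ and that $\mathbb{Z}/2\ast\mathbb{Z}/2\ast\mathbb{Z}/2$ has an index-$2$ free subgroup of rank $2$; and the elementary observation that any free product of three or more non-trivial cyclics contains one of these configurations. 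Since $U$ and $V$ are of infinite order, the cyclic option in turn forces $G_i=\mathbb{Z}$.

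Next I would read off the relevant indices in the amalgamation $G=G_1\ast_A G_2$, where $A=\langle U^{-1}\rangle=\langle V\rangle\cong\mathbb{Z}$: $[G_i:A]=m_i\ge 2$ when $G_i=\mathbb{Z}$ (the exponent appearing in $U=a_1^{m_1}$, respectively $V=a_n^{m_n}$), and $[G_i:A]=2k$ when $G_i=D_\infty$ and $U$ (or $V$) equals $(a_ja_{j+1})^k$. The decisive input is the standard Bass--Serre fact: an amalgamated free product $G_1\ast_A G_2$ with $A$ proper in both factors is virtually cyclic (hence solvable and $F_2$-free) exactly when $[G_1:A]=[G_2:A]=2$; in every other case it contains a rank-$2$ free subgroup. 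So both indices must be $2$.

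Combining these restrictions case-by-case gives precisely three configurations: $(G_1,G_2)=(\mathbb{Z},\mathbb{Z})$ with $m_1=m_2=2$ is $H_1$; $(\mathbb{Z},D_\infty)$ (or its symmetric variant) with $m_1=2$ and $k=1$ is $H_2$ after relabelling; and $(D_\infty,D_\infty)$ with $k=l=1$ is $H_3$. Solvability in each case is witnessed by the short exact sequence $1\to A\to G\to G_1/A\ast G_2/A=D_\infty\to 1$ produced in the index-$2$ situation, which displays $G$ as a metabelian cyclic-by-$D_\infty$ extension.

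The main obstacle is the Bass--Serre step, since the dichotomy invoked is not among the two explicit bullet facts stated before the corollary. A self-contained workaround is to argue case-by-case by producing explicit quotients: for $(\mathbb{Z},\mathbb{Z})$ one mods out by the centre $\langle U\rangle$ to land in $\mathbb{Z}/m_1\ast\mathbb{Z}/m_2$, applies fact $2$, and uses that central $\mathbb{Z}$-extensions of free groups split (since $H^2(F_2;\mathbb{Z})=0$) to lift $F_2$ back to $G$; in the mixed and dihedral cases, killing a suitable torsion element leaves a quotient containing a subgroup of the form $\mathbb{Z}/m\ast\mathbb{Z}/l$ with $m+l\ge 5$, and fact $2$ together with the standard lifting of free subgroups along surjections finishes.
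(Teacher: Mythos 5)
Your argument is correct, and it is worth noting that the paper actually states this corollary without proof, merely prefacing it with the two ``simple facts'' (the non-elementary $\PSL(2,\mathbb{C})$ criterion and the statement about $\mathbb{Z}/p\ast\mathbb{Z}/q$ with $p+q\ge 5$), which suggests an intended route through the essential representation of Theorem \ref{theo3}. Your first step --- cutting each factor down to $\mathbb{Z}$ or $\mathbb{Z}/2\ast\mathbb{Z}/2$ using fact 2 together with Euler-characteristic/Reidemeister--Schreier computations for $\mathbb{Z}\ast\mathbb{Z}/q$ and $\mathbb{Z}/2\ast\mathbb{Z}/2\ast\mathbb{Z}/2$, and the infinite order of $U$ and $V$ --- matches what the paper's fact 2 is there for. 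Where you genuinely diverge is the decisive second step: instead of arguing that every essential representation has non-elementary image outside the three exceptional cases, you invoke the standard Bass--Serre dichotomy for $G_1\Asterisk_A G_2$ (a rank-$2$ free subgroup exists unless $[G_1:A]=[G_2:A]=2$), read the indices off the normal forms $U=a_1^{m}$ or $U=(a_1a_2)^k$, and exhibit solvability via the normal subgroup $A$ with quotient $D_\infty$. This is a good trade: it is self-contained and sidesteps a real subtlety in the representation-theoretic route, namely that the specific representation constructed in Theorem \ref{theo3} (with $\rho(U^{-1})=\rho(V)$ diagonal) can have elementary image even for non-solvable groups such as $\langle a,b\mid a^3b^3=1\rangle$, so that route would require choosing a different representation or an auxiliary argument in exactly the cases your index computation handles directly. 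Your fallback ``explicit quotient'' workaround is sketchier (the central-extension lifting step is only needed because you avoid Bass--Serre), but the main line of the proof does not depend on it.
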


From our previous result we get a close tie between the existence of non-Abelian free subgroups and SQ-universality. Recall that a group $H$ is \textit{SQ-universal} if every countable group can be embedded isomorphically as a subgroup of a quotient of $H$. SQ-universality is related to the concept of large groups (in the sense of S.~Pride).

A \textit{large group} is a group with a finite index subgroup that maps onto the free group $F_2$ of rank 2. We know that $F_2$ is SQ-universal, see for instance \cite{0}. Now in \cite{E7}, P.\,M.\,~Neumann showed the following. If $G$ is a subgroup of finite index in a group $G$, then $G$ is SQ-universal if and only if $H$ is SQ-universal. Hence, altogether, large groups are SQ-universal.

\begin{thm} Let $G$ be a group of F-type. If $G$ is not solvable then $G$ is \textit{large}. In particular, a group of F-type is either SQ-universal or solvable.
\end{thm}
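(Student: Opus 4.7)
The plan is to pass to a torsion-free finite-index normal subgroup, exploit its Bass--Serre decomposition induced by the amalgamation $G = G_1 \Asterisk_A G_2$, and then apply a deficiency-based largeness criterion. By the preceding corollary, $G$ is virtually torsion-free, so let $N$ be a torsion-free normal subgroup of finite index. Since $N$ acts without inversions on the Bass--Serre tree of the splitting and the quotient graph is finite, $N$ is the fundamental group of a finite graph of groups. The vertex groups are torsion-free subgroups of conjugates of $G_1$ or $G_2$; because the $G_i$ are free products of cyclic groups, the Kurosh subgroup theorem forces these vertex groups to be finitely generated free groups. The edge groups are torsion-free subgroups of conjugates of the infinite cyclic group $A$, hence trivial or infinite cyclic.

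From the standard presentation of such a graph of groups one reads off that $N$ admits a presentation of deficiency at least $1 - \chi(N)$, where $\chi(N) = [G:N]\,\chi(G)$ by multiplicativity of the rational Euler characteristic. If $\chi(G) < 0$, I would take $[G:N]$ large enough that $\chi(N) \leq -1$, forcing the deficiency of $N$ to be at least $2$; the Baumslag--Pride largeness theorem then produces a finite-index subgroup of $N$, and hence of $G$, surjecting onto $F_2$. The case $\chi(G) > 0$ does not occur for F-type groups, as the explicit formula $\chi(G) = 2 - n + \sum_{i=1}^n 1/e_i$ (with the convention $1/0 := 0$) combined with the F-type restrictions on $U$ and $V$ shows. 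In the borderline case $\chi(G) = 0$, the presentation is highly restricted; excluding the three solvable groups $H_1, H_2, H_3$ of the preceding corollary, the remaining examples are principally the torus-knot groups $\langle a, b \mid a^m b^k = 1\rangle$ with $(m,k) \neq (2,2)$, together with a short list of closely related groups with $n \in \{3,4\}$. For torus-knot groups the centre $Z = \langle a^m \rangle = \langle b^{-k}\rangle$ is infinite cyclic and $G/Z \cong \mathbb{Z}/m \ast \mathbb{Z}/k$ contains $F_2$ as a subgroup of finite index (Fact~2 of the Tits-alternative discussion). The preimage of such an $F_2$ is a central $\mathbb{Z}$-extension of a free group, which splits as $\mathbb{Z} \times F_r$ since $H^2(F_r;\mathbb{Z}) = 0$, and hence surjects onto $F_2$. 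The remaining sporadic cases admit analogous centre- or quotient-and-pullback arguments.

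The in-particular clause is then immediate: $F_2$ is SQ-universal (classically, via Higman--Neumann--Neumann, since every countable group embeds into a two-generator group), any overgroup of a subgroup mapping onto $F_2$ inherits SQ-universality, and SQ-universality passes from a finite-index subgroup to the whole group. The main obstacle is the borderline case $\chi(G) = 0$, which lies just outside the scope of the Baumslag--Pride deficiency argument and requires the separate centre-splitting treatment for the torus-knot groups along with a small amount of explicit analysis for the finite list of other $\chi = 0$ examples.
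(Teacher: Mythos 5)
Your argument is correct in substance and ultimately rests on the same engine as the paper, namely the Baumslag--Pride theorem that a finite presentation with two more generators than relators yields a finite-index subgroup mapping onto $F_2$; but the route to the deficiency bound and the treatment of the boundary cases are genuinely different. The paper does not pass to a torsion-free subgroup of the amalgam: it works with the free product of cyclics $X=\langle a_1,\dots,a_n\mid a_1^{e_1}=\dots=a_n^{e_n}=1\rangle$ (after replacing any $e_i=0$ by a finite exponent in a quotient), takes the kernel $Y$ of a map onto a finite group meeting each $\langle a_i\rangle$ trivially, computes $\rank(Y)$ by Riemann--Hurwitz, and presents the finite-index subgroup $Y/K$ of $G$ on $r$ generators and $j$ conjugates of $UV$, obtaining deficiency $1-j\chi(G)$ --- numerically the same as your $1-\chi(N)$. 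Your Bass--Serre computation is a clean substitute; note that since $\chi(N)$ is an integer, $\chi(G)<0$ already forces $\chi(N)\le-1$ for \emph{any} torsion-free finite-index $N$, so no enlargement of the index is needed. The real divergence is at the boundary: the paper splits by $n$, handling $n=4$ with all $e_i=2$ via an explicit quotient onto the free product of a finite dihedral group with $\mathbb{Z}/2$ and deferring $n=3$ to \cite{82} (and not addressing $n=2$ at all), whereas you organize the boundary as $\chi(G)=0$ and treat it by a normal-cyclic-quotient argument; your organization is tidier and does cover the torus-knot case.

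Two places should be tightened. The $\chi(G)=0$ list you gesture at is exactly: $n=2$ with $e_1=e_2=0$; $n=3$ with one $e_i=0$ and the other two equal to $2$; and $n=4$ with all $e_i=2$. In each case $G_1,G_2\in\{\mathbb{Z},D_\infty\}$ and $A=\langle U\rangle=\langle V\rangle$ is normal in both factors, hence in $G$, so $G/A\cong(G_1/A)\ast(G_2/A)$ is a free product of two finite groups, which is non-elementary except precisely for $H_1,H_2,H_3$; pulling back a finite-index free subgroup of rank at least $2$ gives a finite-index subgroup of $G$ that already surjects onto it, so the splitting of the central extension is not even needed. Writing this out would replace the phrase ``analogous centre- or quotient-and-pullback arguments'' with a proof. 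Finally, the clause ``any overgroup of a subgroup mapping onto $F_2$ inherits SQ-universality'' is false in general (an infinite simple group containing $F_2$ need not be SQ-universal); what you need, and also state, is that SQ-universality lifts along quotient maps and, by P.~Neumann's theorem, passes up from a subgroup of finite index.
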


\begin{proof}Suppose $G$ is not solvable. We may assume without loss of generality that each $e_i\ge 2$. Let $\rho\colon G\to \PSL(2,\mathbb{C})$ be an essentially faithful representation, and let $N$ be a normal torsion-free subgroup of finite index in $\rho(G)$. Let $\pi$ be the canonical epimorphism from $\rho(G)$ onto the finite group $\rho(G)/N$. Now consider \[X=\langle a_1,\dots, a_n\mid a_1^{e_1}=\dots=a_n^{e_n}=1\rangle.\] There is a canonical epimorphism $\varepsilon\colon X\to G$. Consider the sequence
\[\xymatrix{
  X \ar[r]^-{\varepsilon} &  \ar[r]^-{\rho} G & \rho(G) \ar[r]^-{\pi}  & \rho(G)/N.
}\]

This yields an epimorphism $\psi\colon X\to \rho(G)/N$. Let $Y=\ker(\psi)$. Then $Y$ is a normal subgroup of finite index $j$ in $X$, and $Y\cap \langle a_i\rangle=\{1\}$ for $i=1,\dots, n$. Then by the Kurosh theorem $Y$ is a free group of finite rank $r$. The finitely generated free product of cyclic groups $Y$ may be considered as a Fuchsian group of finite hyperbolic area.

From the Riemann-Hurwitz formula we have \[j\left(n-1-\left(\frac{1}{e_1}+\dots+\frac{1}{e_n}\right)\right)=r-1.\]
Therefore we have \[r=1-j\left(\left(\frac{1}{e_1}+\dots+\frac{1}{e_n}\right)-n+1\right).\]
The group $G$ is obtained from $X$ by adjoining the additional relation $UV=1$ and thus $G=X/K$ where $K$ is the normal closure of $UV$ in $X$. Since $K\subset Y$ the factor group $Y/K$ may be regarded as a subgroup of some finite index $j$ in $G$, and using the Reidemeister-Schreier method, $Y/K$ can be defined on $r$ generators and $j$ relations. Then the deficiency of this presentation is given by
\begin{align*}d&=r-j=1-j\left(\left(\frac{1}{e_1}+\dots+\frac{1}{e_n}\right)-n+2\right)\\
&=1+j\left(n-2-\left(\frac{1}{e_1}+\dots + \frac{1}{e_n}\right)\right).
\end{align*}
If $n\ge 5$ or $n=4$ and at least one $e_i\neq 2$, then $d\ge 2$. It follows then from \cite{9} that $G$ contains a subgroup of finite index which maps onto a free group of rank 2. Hence, $G$ is large and therefore SQ-universal. Next suppose $n=4$ and all $e_i=2$. Then necessarily $p=2$ and $U=(a_1a_2)^s, V=(a_3a_4)^t$ with $|s|,|t|\ge 1$. Since $G$ is non-solvable, then without loss of generality we may assume that $s\ge 2$ and $t\ge 1$. Then $G$ has as a factor group the free product 
\[\overline{G}=\langle a_1, a_2, a_3\mid a_1^2=a_2^2=(a_1a_2)^s=a_3^2=1\rangle .\]
The group $\overline{G}$ has as a normal subgroup of index 2 a group that is isomorphic to \[\langle x,y,z\mid x^s=y^2=z^2=1\rangle.\] Therefore, $\overline{G}$ and hence $G$ also has a subgroup of finite index mapping onto a free group of rank 2. If $n=3$ then the result follows by a similar case-by-case consideration, see \cite[Chapter 8]{82}.
\end{proof}

One of the most powerful techniques in the study of Fuchsian groups is the Riemann-Hurwitz formula relating the Euler characteristic of the whole group to that of a subgroup of finite index.

The concept of a rational Euler characteristic is extended to more general finitely presented groups. Further, these general rational Euler characteristics satisfy the Riemann-Hurwitz formula. For the general development of group homology and Euler characteristic we refer to \cite{29}.

\begin{thm} Let $G$ be a group of F-type. Then $G$ has a rational Euler characteristic $\chi(G)$ given by \[\chi(G)=2+\sum_{i=1}^na_i,\] where $a_i=-1$ if $e_i=0$ and $a_i=-1+\frac{1}{e_i}$ if $e_i\ge 2$. If $|G:H|<\infty$ then $\chi(G)$ is defined and $\chi(H)=|G:H|\cdot\chi(G)$. In addition, $G$ then is of finite homological type WFL, that is, $G$ is virtually torsion-free and for every torsion free subgroup of finite index $\mathbb{Z}$ admits a finite free resolution over the group ring $\mathbb{Z}G$, and $G$ has virtual cohomological dimension $\vcd(G)\le 2$.
\end{thm}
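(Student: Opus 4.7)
The plan is to exploit the amalgamated free product decomposition $G=G_1\ast_A G_2$ with $A\cong\mathbb{Z}$ recorded in the introduction, together with the standard multiplicativity and amalgamation formulas for the rational Euler characteristic (see \cite{29}).

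First I would compute $\chi(G_1)$ and $\chi(G_2)$. Using $\chi(\mathbb{Z}/e)=1/e$ for $e\ge 2$ and $\chi(\mathbb{Z})=0$, one checks that $\chi(\langle a_i\rangle)=a_i+1$ in either case. The free product formula $\chi(H_1\ast H_2)=\chi(H_1)+\chi(H_2)-1$ applied inductively then gives
\[\chi(G_1)=\sum_{i=1}^p(a_i+1)-(p-1)=1+\sum_{i=1}^p a_i,\]
and symmetrically $\chi(G_2)=1+\sum_{i=p+1}^n a_i$. Since $A$ is infinite cyclic, $\chi(A)=0$, so the amalgamation formula $\chi(G_1\ast_A G_2)=\chi(G_1)+\chi(G_2)-\chi(A)$ yields $\chi(G)=2+\sum_{i=1}^n a_i$. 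The Riemann-Hurwitz identity $\chi(H)=|G:H|\,\chi(G)$ for finite-index subgroups is then a formal consequence of the Wall-Serre theory of rational Euler characteristic, again by \cite{29}.

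For the homological statements I would start from the preceding Corollary, which gives that $G$ is virtually torsion-free; pick a torsion-free subgroup $H$ of finite index. Bass-Serre theory applied to the splitting $G=G_1\ast_A G_2$ displays $H$ as the fundamental group of a finite graph of groups whose vertex groups are intersections of $H$ with conjugates of $G_1$ or $G_2$, hence torsion-free finite-index subgroups of free products of cyclics, which are free by the Kurosh subgroup theorem, and whose edge groups are subgroups of conjugates of $A\cong\mathbb{Z}$, hence trivial or infinite cyclic. The Mayer-Vietoris sequence in group cohomology associated to this graph of groups then forces $\mathrm{cd}(H)\le 2$, so $\vcd(G)\le 2$. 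Finite generation of all the pieces, together with finiteness of the underlying graph, upgrades this to a finite free resolution of $\mathbb{Z}$ over $\mathbb{Z}H$, establishing the WFL property.

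The main delicate step is checking that the Bass-Serre data for $H$ are genuinely finite: because $H$ has finite index in $G$, the Bass-Serre tree of $G=G_1\ast_A G_2$ has finitely many $H$-orbits of vertices and edges, and each vertex (resp.\ edge) stabilizer in $H$ is a finite-index subgroup of the corresponding finitely generated $G$-stabilizer and hence finitely generated. With these finiteness properties in hand, the cohomological dimension bound and the FL property follow directly from the standard machinery of \cite{29}.
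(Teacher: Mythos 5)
Your proposal is correct and follows essentially the same route as the paper: the Euler characteristic is computed exactly as there, via the amalgamation formula $\chi(G)=\chi(G_1)+\chi(G_2)-\chi(A)$ with $\chi(A)=0$ and the free-product computation rules for $G_1$ and $G_2$, with Riemann--Hurwitz and the homological statements delegated to Brown \cite{29}. Your Bass--Serre/Mayer--Vietoris argument for $\vcd(G)\le 2$ and the WFL property is simply a correct unpacking of the machinery that the paper cites from Section 7.6 of \cite{29} without further detail.
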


\begin{proof}Since $G$ is virtually finite we can apply the techniques of K.~Brown, see \cite{29}, to get an Euler characteristic for $G$ by the formula \[\chi(G)=\chi(G_1)+\chi(G_2)-\chi(A).\]
Recall that $\chi(H)$ is defined if $H$ is a free product of cyclics. Since $A$ is infinite cyclic we have $\chi(A)=0$. Therefore $\chi(G)=\chi(G_1)+\chi(G_2)$. $G_1$ and $G_2$ are free products of cyclic groups, so we can apply the computation rules for the free products, and we get \[\chi(G)=2+\sum_{i=1}^n\alpha_i\]
where $\alpha_i=-1$ if $e_i=0$ and $\alpha_i=-1+\frac{1}{e_i}$ if $e_i\ge 2$. We note that the Euler characteristic of a group of F-type can be zero. In fact, for instance, $\chi(G)=0$ if $n=2$. Now, from Section 7.6 in \cite{29}, we get that $G$ is of finite homological type WFL with $\vcd(G)\le 2.$
\end{proof}

\section{Additional Algebraic Results for Groups of F-Type}
The first two results follow by a straightforward application of the Nielsen cancellation method in free products with amalgamation, see \cite{49} and \cite{0} for a discussion of the Nielsen cancellation method.

\begin{thm}[Freiheitssatz for groups of F-type] Let $G$ be a group of F-type. Suppose that $UV$ involves all the generators. Then
\begin{enumerate}
\item[1.]Any subset of $(n-2)$-many of the given generators generates a free product of cyclics of the obvious orders.
\item[2.]If both $U$ and $V$ are proper powers in the respective factors $G_1$ and $G_2$, then any subset of $(n-1)$-many of the given generators generates a free product of cyclics of the obvious orders.
\end{enumerate}
\end{thm}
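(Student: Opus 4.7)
The plan is to exploit the amalgamated-product decomposition $G=G_1\ast_A G_2$ with $A=\langle U^{-1}\rangle=\langle V\rangle$ infinite cyclic, and to apply the Nielsen cancellation method in free products with amalgamation (as in \cite{49} and \cite{0}). The fundamental remark is that since $UV$ involves every $a_i$, the word $U$ involves all of $a_1,\ldots,a_p$ and $V$ involves all of $a_{p+1},\ldots,a_n$; consequently every non-trivial power of $U$ (respectively of $V$) still involves each of these generators, so the subgroup generated by any proper subset of $\{a_1,\ldots,a_p\}$ (respectively $\{a_{p+1},\ldots,a_n\}$) meets $A$ trivially.

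For part~1, I would write the chosen subset as $T=T_1\sqcup T_2$ with $T_j$ contained in the generators of $G_j$, and split into cases according to where the two omitted generators lie. When one generator is removed from each factor, $\langle T_1\rangle\cap A=\{1\}=\langle T_2\rangle\cap A$, and the normal form theorem for amalgamated products yields $\langle T\rangle=\langle T_1\rangle\ast\langle T_2\rangle$ directly. When both omissions lie in the same factor, say $G_1$, one has $\langle T_2\rangle=G_2$ and $\langle T_1\rangle\cap A=\{1\}$; a Nielsen cancellation argument then shows $\langle T\rangle=\langle T_1\rangle\ast G_2$, because any $G_2$-factor in $A$ that tries to absorb into adjacent $\langle T_1\rangle$-factors would produce an element of $G_1$ involving the omitted generators, which cannot lie in $A$, precluding further reduction. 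In every case the subgroup is a free product of cyclics of the obvious orders.

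For part~2, one generator is omitted; without loss of generality this is $a_1$, so $\langle T_2\rangle=G_2\le H:=\langle T\rangle$. Because $A\le G_2\le H$, the standard subgroup result for amalgamated products gives $H=(H\cap G_1)\ast_A G_2$, and by tracing the amalgamation one identifies $H\cap G_1=\langle T_1,U\rangle$ inside $G_1$. I would then invoke the proper-power hypothesis $U=U_0^{\alpha}$, $\alpha\ge 2$, and Nielsen reduction in the free product of cyclics $G_1$ to prove $\langle T_1,U\rangle=\langle T_1\rangle\ast\langle U\rangle$; the amalgamation over $A=\langle U\rangle$ then absorbs the $\langle U\rangle$ factor into $G_2$, producing $H=\langle T_1\rangle\ast G_2$, the predicted free product of cyclics. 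Removing a generator from $\{a_{p+1},\ldots,a_n\}$ is handled symmetrically using the proper-power hypothesis on $V$.

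The principal obstacle is the splitting assertion $\langle T_1,U\rangle=\langle T_1\rangle\ast\langle U\rangle$ inside $G_1$: without the proper-power assumption on $U$, cyclic reduction in $G_1$ can equate products of $T_1$-elements with powers of a root of $U$ and introduce spurious relations, but with $\alpha\ge 2$ the Nielsen-reduced form of the generating set $T_1\cup\{U\}$ confines such interactions so that $\langle T_1\rangle$ and $\langle U\rangle$ remain independent free factors, which is exactly what is needed for the amalgamation to collapse into the desired free product.
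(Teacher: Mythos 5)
You are working against a statement for which the paper records no argument at all --- it only remarks that the result ``follows by a straightforward application of the Nielsen cancellation method in free products with amalgamation'' --- so your proposal must stand on its own. Its skeleton is the right one and is surely what the authors intend: write $G=G_1\ast_A G_2$ with $A=\langle U\rangle=\langle V\rangle$, observe that every non-trivial element of $A$ involves all of $a_1,\dots,a_p$ (resp.\ $a_{p+1},\dots,a_n$), so that $\langle T_j\rangle\cap A=\{1\}$ whenever $T_j$ omits a generator of $G_j$, and reduce everything to the normal form theorem for the amalgam. The sub-case of part~1 in which one generator is dropped from each factor is indeed immediate from this. You have also correctly isolated the crux: every remaining case reduces to the splitting assertion $\langle T_1,U\rangle=\langle T_1\rangle\ast\langle U\rangle$ inside the free product of cyclics $G_1$ (and its mirror image in $G_2$).

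The gap is that this splitting assertion is only asserted, not proved, and as stated it is false: ``Nielsen reduction confines such interactions'' does not survive $2$-torsion. Take $G_1=\langle a_1,a_2\mid a_1^2=a_2^2=1\rangle$ and $U=(a_1a_2)^2$, a proper power of infinite order, with $T_1=\{a_2\}$. Then $a_2Ua_2^{-1}=(a_2a_1)^2=U^{-1}$, so $\langle a_2,U\rangle$ is infinite dihedral and not $\langle a_2\rangle\ast\langle U\rangle\cong\mathbb{Z}_2\ast\mathbb{Z}$. This is not merely a hole in the write-up: pushed through your (correct) reduction it contradicts the conclusion itself. In the group of F-type $\langle a_1,a_2,a_3,a_4\mid a_1^2=a_2^2=a_3^2=a_4^2=(a_1a_2)^2(a_3a_4)^2=1\rangle$, where both $U$ and $V$ are proper powers, one has $a_2(a_3a_4)^2a_2^{-1}=a_2U^{-1}a_2^{-1}=U=(a_3a_4)^{-2}$, so $a_2(a_3a_4)^2$ has order $2$ in $G$ although it has infinite order in $\langle a_2\rangle\ast\langle a_3\rangle\ast\langle a_4\rangle$; hence $\{a_2,a_3,a_4\}$ does not generate a free product of cyclics of the obvious orders (the subgroup is $D_\infty\ast_{\mathbb{Z}}D_\infty$ with the amalgamated $\mathbb{Z}$ of indices $2$ and $4$, which has Euler characteristic $0$ and contains a free group of rank $2$, so it is not a free product of cyclics at all). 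The variant $U=a_1a_2a_1\,a_3a_4a_3$, $V=a_5a_6$, all $e_i=2$, does the same damage to part~1 when both omitted generators ($a_3$ and $a_4$) come from one factor, since then $a_1a_2a_1\cdot a_5a_6$ is an involution in $G$. So the obstruction you must rule out is precisely an involution $x\in\langle T_1\rangle$ with $xUx^{-1}=U^{-1}$, equivalently $U$ being a product of two involutions one of which lies in $\langle T_1\rangle$ --- the same ``product of two elements of order $2$'' configuration that the paper itself must exclude, in the remark following this theorem, to obtain malnormality of $\langle U\rangle=\langle V\rangle$. A correct treatment has to either impose that exclusion as a hypothesis (or restrict part~1 to subsets omitting one generator from each factor, where your argument is complete) or handle this configuration separately; the Nielsen-reduction step cannot be completed as you have written it.
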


\begin{thm} Let $G$ be a group of F-type and let $H$ be a non-cyclic two-generator subgroup of $G$. Then $H$ is conjugate in $G$ to a subgroup $\langle x,y\rangle$ satisfying one of the following conditions:
\begin{enumerate}
\item[(1)]$\langle x,y\rangle$ is a free product of cyclic groups.
\item[(2)]$x^t$ is in $\langle U\rangle=\langle V\rangle$ for some natural number $t$ and $y^{-1}x^ty$ is in $\langle a_1,\dots, a_p\rangle$ or $y^{-1}x^ty$ is in $\langle a_{p+1},\dots , a_n\rangle$.
\end{enumerate}
\end{thm}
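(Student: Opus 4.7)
The plan is to apply the Nielsen cancellation method for two-generator subgroups of an amalgamated free product, as developed in \cite{49} and recorded in \cite{0}, to the canonical decomposition $G = G_1 *_A G_2$ with $A = \langle U^{-1}\rangle = \langle V\rangle$ infinite cyclic that is spelled out in the introduction.

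First I would take an arbitrary generating pair $\{h_1,h_2\}$ of $H$ and perform Nielsen transformations, combined with a single conjugation in $G$, to produce a generating pair $\{x,y\}$ of a conjugate of $H$ that is Nielsen-reduced with respect to the amalgam length function. The standard structural theorem for such reduced pairs yields a dichotomy: either every nontrivial reduced product in $x,y$ has amalgam length equal to the sum of its syllable lengths, or some nontrivial reduced word in $x,y$ represents an element of the amalgamated subgroup $A$. In the first, ``free'', case the subgroup $\langle x,y\rangle$ is isomorphic to the free product of the cyclic groups $\langle x\rangle$ and $\langle y\rangle$, and conclusion~(1) holds.

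In the ``amalgamation'' case I would argue that for a reduced pair of two generators the only way a nontrivial word in $x,y$ can represent an element of $A$ is that some power $x^t$ already lies in $A = \langle U\rangle = \langle V\rangle$; reducedness of $\{x,y\}$ then forces $y^{-1}x^ty$ to collapse into a single vertex group, namely $G_1 = \langle a_1,\ldots,a_p\rangle$ or $G_2 = \langle a_{p+1},\ldots,a_n\rangle$, which is exactly conclusion~(2). A degenerate subcase is when both $x$ and $y$ lie in a single factor after conjugation: then $\langle x,y\rangle$ sits inside a free product of cyclics, and by the Kurosh subgroup theorem we land back in conclusion~(1).

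The main obstacle I anticipate is the bookkeeping in the short-length boundary situations, where $x$ or $y$ has amalgam length at most one: here one must verify that all the shortening moves used during Nielsen reduction can be absorbed into a single conjugation of the subgroup, rather than requiring different conjugations at different stages of the reduction. The cyclicity of $A$ is what makes this tractable, since it keeps the centralizers and normalizers of elements of $A$ inside $G_1$ and $G_2$ small and the resulting case analysis finite.
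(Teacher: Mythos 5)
Your proposal is correct and follows essentially the same route as the paper, which gives no detailed argument for this theorem but simply asserts that it follows by a straightforward application of the Nielsen cancellation method in free products with amalgamation as in \cite{49} and \cite{0}. Your outline --- Nielsen-reducing a generating pair with respect to the amalgam length function over $G=G_1\Asterisk_A G_2$ with $A=\langle U^{-1}\rangle=\langle V\rangle$ cyclic, and then reading off the dichotomy between the free case (1) and the case (2) where a power of a generator falls into $A$ --- is precisely that application.
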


We could also prove Theorem 3.2 differently using \cite{128}.

Recall that a subgroup $K$ of $H$ is \textit{malnormal} in $H$ provided $gKg^{-1}\cap K=\{1\}$ unless $g\in H$. If $G$ is a group of F-type then $\langle U\rangle =\langle V\rangle$ is malnormal in $G$ if neither $U$ nor $V$ is a proper power or is conjugate to a word of the form $xy$ for elements $x,y$ of order 2.

Since non-cyclic two-generator subgroups of the free products of cyclic groups are free products of two cyclic groups we get the following corollary.

\begin{cor}\label{cor10a}Let $G$ be a group of F-type. Suppose further that $\langle U\rangle =\langle V\rangle$ is malnormal in $G$. Then any two-generator subgroup of $G$ is a free product of cyclics, and $\rank(G)\ge 3$.
\end{cor}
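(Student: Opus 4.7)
The approach is to combine the preceding two-generator classification with the malnormality of $A := \langle U\rangle = \langle V\rangle$ and Kurosh's subgroup theorem.

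For the first assertion, let $H$ be a non-cyclic two-generator subgroup; by the previous theorem, after conjugation $H = \langle x, y\rangle$ satisfies either case~(1), where $\langle x,y\rangle$ is already a free product of cyclics, or case~(2), in which $x^t \in A \setminus \{1\}$ for some natural $t$ and (say) $y^{-1}x^ty \in G_1$, with the $G_2$-case being symmetric. In case~(2), the malnormality of $A$ in $G$ gives $C_G(x^t) = A$: any $g$ commuting with $x^t$ places the nontrivial element $x^t$ into $gAg^{-1} \cap A$, which by malnormality forces $g \in A$. Hence $x \in A$. For $y$, write it in reduced normal form for $G_1 \Asterisk_A G_2$; malnormality of $A$ is inherited by each $G_i$, so no syllable cancellation occurs when one conjugates $x^t \in A$ across a syllable outside $A$. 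A direct length count shows that if $y$ has reduced length $k \ge 2$, then $y^{-1}x^ty$ has reduced length $2k-1 \ge 3$, contradicting $y^{-1}x^ty \in G_1$. Thus $y \in G_1 \cup G_2$; and $y \in G_2 \setminus A$ would yield $y^{-1}x^ty \in G_2 \setminus A$, again incompatible with $y^{-1}x^ty \in G_1$. So $y \in G_1$, whence $\langle x, y\rangle \subseteq G_1$. Since $G_1$ is a free product of cyclic groups, Kurosh's subgroup theorem gives that $\langle x, y\rangle$ is itself a free product of cyclics.

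For the rank assertion I argue by contradiction: if $\rank(G) \le 2$, then $G$ is itself a two-generator group, so by the first part $G$ is a free product of cyclics; Grushko's theorem then gives $G \cong C_{r_1} * C_{r_2}$ for some $r_1, r_2$, or $G$ is cyclic. The one-factor cyclic case is excluded because $G_1 \supsetneq A$ would force $G_1 \subseteq N_G(A) = A$ by malnormality, contradicting $G_1 \ne A$. In the two-factor case, applying Kurosh to the infinite cyclic subgroup $A$ inside $C_{r_1} * C_{r_2}$ together with malnormality places $A$ either as an infinite-cyclic free factor (necessarily equal to some $C_{r_i}$ with $r_i = \infty$) or as a hyperbolic cyclic subgroup. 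In each case, comparing this with the amalgamated decomposition $G = G_1 \Asterisk_A G_2$ under the requirement $G_i \ne A$ yields a contradiction: passing to the quotient $G / \langle\langle A\rangle\rangle$ gives on one side a cyclic quotient of $C_{r_1} * C_{r_2}$ and on the other the free product $(G_1 / \langle\langle A\rangle\rangle_{G_1}) * (G_2 / \langle\langle A\rangle\rangle_{G_2})$, and a cyclic group cannot be expressed as such a nontrivial free product without a collapse that is ruled out by the structure of $G_i$ together with malnormality. The chief obstacle is making this Bass--Serre-style comparison fully rigorous while eliminating every degenerate possibility.
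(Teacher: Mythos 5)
Your treatment of the first assertion is correct and coincides with what the paper intends (the paper offers no written proof beyond the one-sentence remark preceding the corollary): in case~(2) of the preceding theorem malnormality gives $C_G(x^t)=A$, hence $x\in A$, and the syllable-length count using $g^{-1}ag\notin A$ for $g\in G_i\setminus A$, $a\in A\setminus\{1\}$, forces $y\in G_1$, so $\langle x,y\rangle$ lies in a conjugate of a factor and Kurosh finishes. (You do silently read case~(2) as asserting $x^t\neq 1$; that is the only sensible reading of the theorem, but it is worth making explicit.)

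The genuine gap is in the proof that $\rank(G)\ge 3$. The reduction to ``$G\cong C_{r_1}\Asterisk C_{r_2}$ or $G$ cyclic'' is fine, and your quotient argument does dispose of the cases where $G$ is cyclic or where $A$ is conjugate to an infinite cyclic free factor of $C_{r_1}\Asterisk C_{r_2}$ (modulo checking that both vertex quotients $G_i/\langle\langle U\rangle\rangle_{G_i}$ are nontrivial, which itself uses $p\ge 2$ and $n-p\ge 2$). But in the remaining case, where the generator of $A$ is not conjugate into a free factor of $C_{r_1}\Asterisk C_{r_2}$, the quotient $G/\langle\langle A\rangle\rangle$ is merely a one-relator quotient of $C_{r_1}\Asterisk C_{r_2}$ and there is no reason for it to be cyclic, so no contradiction is obtained; you concede as much in your final sentence, and as written this case is simply not handled. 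Two observations repair the argument. First, malnormality already forces $2\le p\le n-2$: if $p=1$ or $p=n-1$, the paper's convention makes $U$ or $V$ a proper power $a_i^m$ with $m\ge 2$, and then $a_i$ normalizes $A$ without lying in it. Hence $\rank(G_1)=p\ge 2$ and $\rank(G_2)=n-p\ge 2$. Second, the Nielsen method for generating systems of $G_1\Asterisk_A G_2$ (Collins--Zieschang \cite{49}, and \cite{82}, \cite{0}) yields, once case~(2) is excluded by malnormality exactly as in your first part, the Grushko-type estimate $\rank(G)\ge\rank(G_1)+\rank(G_2)-1\ge 3$. Alternatively one could invoke a Shenitzer-type theorem on cyclic splittings of free products of cyclics to make your comparison of the two decompositions rigorous, but that is a substantial theorem that must be cited explicitly; it does not follow from Kurosh's theorem alone.
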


\begin{rem}In Subsection 3.5.4 of \cite{0} we just gave a list of properties of groups of F-type. Unfortunately, the last property is not correct as stated at this place. We just forgot to add that $\langle U\rangle =\langle V\rangle$ is malnormal in $G$. The correct statement is given in \cite[Corollary 1.5.20]{0}.

Indeed, the malnormality of $\langle U\rangle =\langle V\rangle$ in $G$ is important. For instance, the following groups of F-type \[\langle a_1,a_2,a_3,a_4\mid a_1^2=a_2^2=a_3^2=a_4^3=a_1a_2a_3a_4=1\rangle\]
and
\[\langle a_1,a_2,a_3\mid a_1^2=a_2^3=a_1a_2a_3^2=1\rangle\]
are two-generator groups. These examples are considered in \cite{0}.
\end{rem}

We may extend Corollary \ref{cor10a} easily to the following.

\begin{cor} Let $G$ be a group of F-type and $UV$ involve all the generators. Suppose further that $p\ge 3$, and $n-p\ge 3$, and that $\langle U\rangle = \langle V\rangle$ is malnormal in $G$. Then any three-generator subgroup of $G$ is a free product of cyclics.
\end{cor}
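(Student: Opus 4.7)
The plan is to extend the Nielsen cancellation argument that underlies Corollary \ref{cor10a} by one further level. Let $H=\langle x,y,z\rangle$ be a three-generator subgroup of $G=G_1\Asterisk_A G_2$. First apply the Nielsen reduction procedure for finitely generated subgroups of amalgamated free products from \cite{49} and \cite{0} to the triple $\{x,y,z\}$; after replacing $H$ by a conjugate in $G$ if necessary, we may assume the reduced triple, still called $x,y,z$, is Nielsen reduced with respect to the amalgam syllable length. The standard classification then lists the possible shapes of such a reduced triple according to the amalgam lengths of its members.

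If some reduced generator has syllable length $\ge 2$, then Nielsen reducedness together with malnormality of $\langle U\rangle=\langle V\rangle$ forces the generators of length $\ge 2$ to act in ping-pong fashion on the Bass-Serre tree of the splitting $G_1\Asterisk_A G_2$, so the subgroup they generate is free, and the remaining generators of length $1$ adjoin freely, giving the desired free-product-of-cyclics structure. If instead all three reduced generators have syllable length $1$, then after a further conjugation we may assume that each of $x,y,z$ lies in $G_1\cup G_2$. Apply the preceding two-generator theorem to each of the three pairs: malnormality of $A$ rules out alternative $(2)$ of that theorem, so any two of $x,y,z$ freely generate a free product of cyclics.

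From here one assembles $H$ from its intersections with (conjugates of) the factors. The hypotheses $p\ge 3$ and $n-p\ge 3$ guarantee that the pieces $H\cap G_1$ and $H\cap G_2$, viewed as finitely generated subgroups of the free products of cyclics $G_1$ and $G_2$, are themselves free products of cyclics by the Kurosh subgroup theorem. Malnormality of $\langle U\rangle=\langle V\rangle$ in $G$ then forces the amalgamation in the induced graph-of-groups decomposition of $H$ to be trivial, so that $H$ splits as an honest free product of those pieces, hence as a free product of cyclics.

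I expect the main obstacle to be the third step, where one must control how the three generators distribute between the two factors and simultaneously verify that no nontrivial amalgamation survives in $H$. The rank conditions $p\ge 3$ and $n-p\ge 3$ are used precisely to avoid the degenerate two-generator collapses exhibited in the preceding Remark; without them, a three-generator subgroup can already represent the whole group and the intended splitting need not exist.
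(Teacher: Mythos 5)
Your overall strategy---Nielsen reduction in the amalgam $G_1\Asterisk_A G_2$, a case split on the syllable lengths of the reduced triple, and malnormality of $\langle U\rangle=\langle V\rangle$ to kill bad alternatives---matches the paper's in outline. The paper reduces a rank-three subgroup $\langle x,y,z\rangle$ to exactly two normal forms: either $x,y,z\in G_1$ (done by Kurosh), or $x,y\in G_1$ and $z=\mu_1\cdots\mu_k$ with $\mu_1,\mu_k\in G_2\setminus\langle V\rangle$, where the conclusion is forced precisely because $n-p\ge 3$. But your write-up has genuine gaps at the points where the content lies. First, you place the hypotheses $p\ge 3$ and $n-p\ge 3$ in the wrong spots: the Kurosh subgroup theorem gives that $H\cap G_1$ and $H\cap G_2$ are free products of cyclics with no condition on $p$ whatsoever, and the degenerate two-generator collapses in the preceding Remark are excluded by malnormality (as the paper says explicitly), not by the rank conditions. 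The conditions $p\ge 3$ and $n-p\ge 3$ are needed in the mixed case---two reduced generators inside one factor, the third with a nontrivial normal form meeting the other factor---and your proposal never invokes them there.

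Second, two of your assembly steps are asserted with justifications that do not hold. Knowing that each of the three pairs from $\{x,y,z\}$ generates a free product of cyclics does not imply the triple does; this is the whole difficulty, and you concede it without resolving it. More seriously, the claim that malnormality of $\langle U\rangle=\langle V\rangle$ ``forces the amalgamation in the induced graph-of-groups decomposition of $H$ to be trivial'' is false as a general principle: malnormality does not prevent $H$ from meeting a conjugate of $A$ nontrivially. For instance $H=\langle U, c_1d_1\rangle$ with $c_1\in G_1\setminus A$, $d_1\in G_2\setminus A$ satisfies $H\cap A=A$, so its induced decomposition has a nontrivial edge group, yet it is a free product of cyclics by Corollary \ref{cor10a} for a different reason (the edge group sits as a free factor in the relevant vertex group). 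What actually closes the argument is the Nielsen cancellation analysis, as in \cite{49} and \cite{0}, of the normal form of the third generator against the two generators lying in $G_1$, using $n-p\ge 3$; that step is absent from your proposal.
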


\begin{proof}Let $x,y,z\in G$. If $\langle x,y,z\rangle$ is already of rank two there is nothing to show. Hence, let $\rank(\langle x,y,z\rangle)=3$. Using the Nielsen cancellation method as described in Section 1.5 of \cite{0} we finally assume, without loss of generality, that one of the following two cases holds:

\begin{enumerate}
\item[a)] $x,y,z\in\langle a_1,\dots, a_p\rangle=G_1$,
\item[b)] $x,y\in\langle a_1,\dots, a_p\rangle =G_1$ with $z\notin G_2$.
\end{enumerate}
In case a) we are done. Now, let $x,y\in G_1$ and $z\notin G_2$. We consider the subgroup $\langle G_1, z\rangle$. We may assume that $z$ has the normal form \[z=\mu_1\mu_2\dots\mu_k,\] $k\ge 1$ and $\mu_1,\mu_k\in G_2\setminus\langle V\rangle$. But then $\langle y,x,z\rangle$ must be a free product of cyclics because $n-p\ge 3$. \end{proof}

If neither $U$ nor $V$ is a proper power, then from Theorem \ref{theo3}, $G$ is both residually finite and Hopfian. This holds in general for groups of F-type. Recall that a group $H$ is \textit{conjugacy separable} if given any non-trivial $g,h\in G$ that are not conjugate then there exists a finite quotient $H^\ast$ of $H$ where images of $g$ and $h$ are still not conjugate. Conjugacy separability implies residual finiteness.

From work of R.\,B.\,J.\,F.~Allenby in \cite{All} we get that a group of F-type is conjugacy separable. Allenby actually proves the following.

Let \[ G=\langle a_1,\dots, a_n\mid a_1^{e_1}=\dots=a_n^{e_n}=(UV)^m=1\rangle,\]
where $n\ge 2$, $e_i=0$ or $e_i\ge 2$, $1\le p\le n-1$, $U=U(a_1,\dots, a_p)$ a non-trivial cyclically reduced word in the free product on $a_1,\dots, a_p$, $V=V(a_{p+1},\dots,a_n)$ a non-trivial cyclically reduced word in the free product on $a_{p+1},\dots,a_n$, and $m\ge 2$. Then $G$ is conjugacy separable.

However, in this proof $m\ge 2$ is only used in the case where either $U$ or $V$ has finite order in the respective free product on the generators which they involve. In a group of F-type $U$ and $V$ are assumed to have infinite order so the result goes through. Hence, we have the following.

\begin{thm}A group of F-type is conjugacy separable and, hence, residually finite and Hopfian.
\end{thm}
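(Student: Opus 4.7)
The plan is to obtain the theorem by a direct appeal to Allenby's result from \cite{All}. First I would state Allenby's theorem as a black box: for the wider class of presentations
\[H=\langle a_1,\dots,a_n\mid a_1^{e_1}=\dots=a_n^{e_n}=(UV)^m=1\rangle\]
with the same cyclically reduced, non-trivial $U$ and $V$ on disjoint sets of generators, and with $m\ge 2$, Allenby shows that $H$ is conjugacy separable. The F-type presentation is on its face the forbidden case $m=1$, so Allenby's theorem cannot be applied verbatim.

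Next I would examine Allenby's proof and pinpoint exactly where the assumption $m\ge 2$ is used. As already indicated in the discussion preceding the theorem, a careful reading shows that this hypothesis enters at a single point, namely the subcase in which $U$ or $V$ has finite order in the respective free product on the generators it involves; there the extra factor $(UV)^m$ with $m\ge 2$ is required to force the amalgamated subgroup $\langle U\rangle=\langle V\rangle$ to be infinite cyclic. In the F-type setting this problematic subcase simply does not arise: by the very definition of an F-type group, both $U$ and $V$ have infinite order in their respective factors, so $\langle U\rangle=\langle V\rangle$ is already infinite cyclic without any auxiliary power. Consequently the remainder of Allenby's Nielsen/cancellation analysis inside $G_1\Asterisk_A G_2$ applies verbatim with $m=1$, and this gives conjugacy separability of $G$.

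Finally I would deduce the two additional properties. Residual finiteness is immediate from conjugacy separability: given any $1\neq g\in G$, the elements $g$ and $1$ are non-conjugate, so there is a finite quotient of $G$ in which their images remain non-conjugate and therefore distinct. The Hopfian property then follows at once from Mal'cev's classical theorem that every finitely generated residually finite group is Hopfian, and $G$ is finitely generated by the defining presentation of an F-type group.

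The principal obstacle is the second paragraph: one must verify line by line that each invocation of $m\ge 2$ in Allenby's cancellation argument can be replaced by the infinite-order hypothesis on $U$ and $V$, and that no hidden use of the extra power of the long relator is required elsewhere. Once that verification is in place, the rest of the argument is formal.
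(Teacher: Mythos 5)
Your proposal matches the paper's own argument: the paper likewise invokes Allenby's theorem for the class with relator $(UV)^m$, $m\ge 2$, observes that the hypothesis $m\ge 2$ is used only in the subcase where $U$ or $V$ has finite order in its free factor, and notes that this subcase is excluded for groups of F-type since $U$ and $V$ have infinite order by definition, so the argument goes through with $m=1$ (the paper then refers to \cite{82} for the detailed verification you flag as the remaining obstacle). Your deduction of residual finiteness and the Hopf property via Mal'cev is the standard route the paper takes implicitly.
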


A detailed proof can be found in \cite{82}. Recall that a group $H$ is \textit{subgroup separable} or \textit{LERF} (locally extended residually finite) if given any subgroup $K$ of $H$ and any element $g$ not in $K$ there exists a subgroup $K^\ast$ of finite index in $H$ such that $K$ is in $K^\ast$ and $g$ is not in $K^\ast$. From work of M.~Aab and G.~Rosenberger, see \cite{1}, we can deduce that groups of F-type are subgroup separable. Details can be found in \cite{0}.

\begin{thm}A group of F-type is subgroup separable.
\end{thm}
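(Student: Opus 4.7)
The plan is to invoke the separability criterion of Aab and Rosenberger~\cite{1} for amalgamated free products of the form $G_1\ast_A G_2$ in which the factors are LERF and the cyclic amalgamating subgroup sits suitably inside each factor. Since a group of F-type carries the canonical decomposition
\[G=G_1\ast_A G_2,\qquad A=\langle U\rangle=\langle V\rangle,\]
with $G_1,G_2$ finitely generated free products of cyclic groups and $A$ infinite cyclic, I would verify the hypotheses of that criterion in turn.

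First I would establish subgroup separability of the two factors. Each $G_i$ is virtually free: by the Kurosh subgroup theorem applied to a torsion-free normal subgroup of finite index, $G_i$ contains a finitely generated free group of finite index. M.\ Hall's classical theorem gives LERF for finitely generated free groups, and since LERF is preserved under finite extensions, each $G_i$ is subgroup separable.

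Next I would check the positioning conditions on the amalgamated subgroup $A$ required to feed the Aab--Rosenberger machinery. LERF of $G_i$ immediately gives separability of the cyclic subgroup $A$ in each factor. Moreover, because $A$ is cyclic and each $G_i$ is virtually free, the stronger double-coset separability used in~\cite{1} holds, and intersections of conjugates of $A$ inside $G_i$ are either trivial or cyclic (in particular finitely generated and separable). With these inputs the Aab--Rosenberger theorem delivers LERF of the amalgamated product $G$.

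The principal obstacle is the precise translation of the hypotheses of~\cite{1} into the F-type setting: the criterion is phrased in terms of separability of certain double cosets and associated intersections in the factor groups, and these conditions must be verified against the Nielsen-cancellation normal forms for $G_1\ast_A G_2$. Once this bookkeeping is carried out, as done in detail in~\cite{0}, the conclusion is immediate.
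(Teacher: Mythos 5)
Your proposal follows exactly the route the paper takes: it deduces subgroup separability from the Aab--Rosenberger criterion for free products with cyclic amalgamation \cite{1}, applied to the canonical decomposition $G=G_1\Asterisk_A G_2$ with LERF factors, and defers the detailed verification of the hypotheses to \cite{0}. Your added justification that each factor is LERF (virtually free via Kurosh, plus M.~Hall's theorem and closure under finite extensions) is correct and only fleshes out what the paper leaves implicit.
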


C.\,Y.~Tang, see \cite{Tang}, considered a class of groups which contains the groups of F-type.

\begin{thm} A group of F-type has solvable generalized word problem, and hence, solvable word problem.
\end{thm}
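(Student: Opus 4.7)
The plan is to combine the subgroup separability of $G$ established just above with the finite presentation built into the definition of a group of F-type. Since $G$ is explicitly finitely presented and was shown to be residually finite, it already has solvable word problem by McKinsey's theorem (any finitely presented residually finite group does). It therefore remains to upgrade this to the generalized word problem for an arbitrary finitely generated subgroup $H\le G$.

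At this point the standard LERF enumeration argument applies. Given a finite generating set for $H$ and a word $w$ in the generators of $G$, I would run two semi-decision procedures in parallel. The first enumerates all products of the generators of $H$ and their inverses and, using the solvable word problem in $G$, tests equality with $w$; it halts with \emph{yes} if and only if $w\in H$. The second enumerates finite quotients of $G$: for each $k$ one lists all candidate maps $\varphi\colon G\to S_k$ by trying all choices of images of the generators and verifying, via the word problem, that the defining relators hold. For each homomorphism produced this way, compute the finite set $\varphi(H)$ and test whether $\varphi(w)\in\varphi(H)$; halt with \emph{no} as soon as some $\varphi$ shows $\varphi(w)\notin\varphi(H)$. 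Subgroup separability guarantees that if $w\notin H$ then some finite quotient separates $w$ from $H$, so exactly one of the two processes terminates.

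An alternative route, and the one suggested by the citation to Tang, is to exploit the amalgamated structure $G=G_1\ast_A G_2$. Both factors are free products of cyclics, hence virtually free, and so have solvable generalized word problem. The amalgamated subgroup $A$ is infinite cyclic, generated by $U$ in $G_1$ and by $V$ in $G_2$, and the membership problems for $\langle U\rangle\le G_1$ and for $\langle V\rangle\le G_2$ are solvable by a direct normal form computation in a free product of cyclics. Tang's machinery for amalgamated products over cyclic subgroups with such effective data then assembles these ingredients into a solution of the generalized word problem for $G$.

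The main obstacle in the Tang-style approach is organizing the Britton-like cancellation so that one can effectively decide, given an element written in amalgam normal form, whether it lies in a fixed finitely generated subgroup; this is precisely the delicate content of \cite{Tang}. The LERF approach avoids this entirely, at the cost of a highly inefficient enumeration. Either way, taking $H=\{1\}$ in the generalized word problem recovers the ordinary word problem, yielding the second assertion as an immediate corollary.
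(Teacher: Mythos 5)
Your argument is correct, but it follows a genuinely different route from the paper. The paper does not give an internal proof at all: it simply observes that Tang's class of one-relator quotients of free products contains the groups of F-type and imports the solvability of the generalized word problem from \cite{Tang}, i.e.\ from a direct combinatorial analysis of the amalgam normal forms --- essentially the ``alternative route'' you sketch second. Your primary route instead derives the result softly from two facts already on the table in the paper: finite presentability (built into the definition) together with residual finiteness gives the word problem by the Mal'cev/McKinsey enumeration, and finite presentability together with the subgroup separability theorem proved immediately beforehand gives the generalized word problem by the standard two-sided enumeration (one semi-procedure enumerating products of the subgroup generators, the other enumerating finite quotients; LERF guarantees that a separating finite quotient exists after passing to the normal core of the separating finite-index subgroup). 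This deduction is sound and arguably more self-contained relative to the paper's own results, at the cost of being wildly non-effective in practice and of resting on the Aab--Rosenberger separability theorem, whose proof is itself only cited. Tang's approach buys an explicit algorithm and does not need LERF; yours buys economy of means given what the paper has already established. One small imprecision: when verifying that a candidate map $G\to S_k$ is a homomorphism you only need to multiply permutations, not invoke the word problem of $G$; and it is worth saying explicitly that a finite-index subgroup $K^\ast\supseteq H$ with $w\notin K^\ast$ yields, via its normal core $N$, a finite quotient $G/N$ in which the image of $w$ avoids the image of $H$. Neither point affects correctness.
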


Recall that the generalized word problem is the following. Let $H=\langle X\mid R\rangle$ be a finite presentation and $W$ a finite set of words $\{w_1,\dots,w_m\}$ in $X$. Let $K$ be the subgroup of $G$ generated by $W$. Given a word $v$ in $X$, can one decide whether or not $v$ is in $K$? From Theorem 4.6 in \cite{152} we see that groups of F-type have solvable conjugacy problem. The applied technique also answers the \textit{power conjugacy problem} for groups of F-type, that is, given two elements to determine if a power of one is conjugate to a power of the other. Hence, we have the following.

\begin{thm}Groups of F-type have both solvable conjugacy problem and solvable power conjugacy problem.
\end{thm}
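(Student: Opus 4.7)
The plan is to use the free product with amalgamation decomposition $G = G_1 \Asterisk_A G_2$ of a group of F-type, apply the conjugacy algorithm from Theorem 4.6 of \cite{152}, and then show that the same Nielsen cancellation technique extends to the power conjugacy problem.

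First I would verify the hypotheses for the cited algorithm. The factors $G_1$ and $G_2$ are free products of cyclic groups and therefore have solvable word, conjugacy, and power problems, and a decidable membership problem in any cyclic subgroup. Since $A = \langle U \rangle = \langle V \rangle$ is infinite cyclic, one can, for any $w \in G_i$, effectively compute the set $\{k \in \mathbb{Z} : w^{-1} U^k w \in A\}$, which is precisely the data required for the amalgamated conjugacy algorithm. With these ingredients the standard procedure applies: bring both inputs to cyclically reduced normal form and analyse cases according to whether each element lies in a conjugate of $A$, in a conjugate of $G_1$ or $G_2$ but not of $A$, or has cyclic syllable length at least two (comparison up to cyclic permutation of syllables modulo $A$).

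For the power conjugacy problem, given $g, h \in G$ I would reduce them to cyclically reduced form and let $r, s$ denote the syllable lengths. If both $r, s \geq 2$, then $g^k$ has cyclic syllable length $kr$ for every $k \geq 1$, since no amalgamated cancellation occurs at a cyclic join of a cyclically reduced word; likewise for $h^\ell$. Hence $g^k \sim h^\ell$ forces $kr = \ell s$. After effectively extracting primitive roots $g_0, h_0$ of $g, h$ in the sense of cyclic syllable structure, the problem reduces to the conjugacy tests $g_0 \sim h_0^{\pm 1}$, each of which is decidable by the conjugacy algorithm above. If $r \leq 1$ or $s \leq 1$, then the corresponding element is conjugate into some $G_i$ or into $A$, and the problem reduces to the power conjugacy problem in a free product of cyclics --- solvable by classical Nielsen methods --- together with decidable membership tests in $A$.

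The main obstacle is the case analysis for the power conjugacy problem, specifically the effective extraction of primitive roots in the cyclic syllable sense and the separate treatment of torsion elements of $G_1$ or $G_2$, whose powers may collapse to the identity or become conjugate into $A$. The cyclic structure of $A$ and the algorithmic transparency of free products of cyclics make each of these steps effective, though the bookkeeping between the three syllable-length regimes is the delicate part of the argument.
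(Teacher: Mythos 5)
Your proposal takes essentially the same route as the paper, which simply decomposes $G=G_1\Asterisk_A G_2$, invokes Theorem 4.6 of \cite{152} for the conjugacy problem, and notes that the same technique settles the power conjugacy problem. Your verification of the hypotheses for the amalgamated conjugacy algorithm and the syllable-length analysis of powers is exactly the content the paper leaves to that citation, so the outline is correct and consistent with the intended argument.
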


\section{Hyperbolic Groups of F-Type}
In the following we present the results by A. Juhász and G. Rosenberger, see \cite{117}, and consider the combinatorial curvature of a class of one-relator products which generalize groups of F-type. The main result is the following.

\begin{thm}\label{theoA} Let $G^{(1)}=\Asterisk_{\alpha\in \mathcal{T}_1}G_\alpha^{(1)}$, $G^{(2)}=\Asterisk_{\beta\in \mathcal{T}_2}G_\beta^{(2)}$, and let $G^{(0)}=G^{(1)}\ast G^{(2)}$. Let $U\in G^{(1)}$ and $V \in G^{(2)}$ be cyclically
reduced words of infinite order in $G^{(0)}$. Assume that if $U \in G_{\alpha}^{(1)}$, then $G_{\alpha}^{(1)}$ is free and $U$ is cyclically reduced in $G_{\alpha}^{(1)}$. Assume the analogue for $V$. Let $G$ be the quotient of $G^{(0)}$ by the normal closure of $UV$ in $G^{(0)}$. Then
\begin{enumerate}
\item[1.] $G$ has non-positive combinatorial curvature as a quotient of ${G}^{(0)}$.
\item[2.] $G$ is hyperbolic in the sense of M.~Gromov if and only if
$G^{(i)}$ is hyperbolic for $i=1,2$, and the following property holds:
\begin{enumerate}
\item[($\dagger$)] At least one of $U$ or $V$ is neither a proper power nor a product of two elements of order  2.
\end{enumerate}
\end{enumerate}
\end{thm}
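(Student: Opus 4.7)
The plan is to work with reduced van Kampen diagrams $D$ over the composite presentation of $G$ obtained from presentations of the factors $G_{\alpha}^{(i)}$ together with the single new relator $UV$. To each corner of each face I would assign a \emph{hyperbolic angle}, so that the face acquires a hyperbolic area measured by the excess of its angle sum over $(k-2)\pi$ for a $k$-gonal face. Both statements then follow from Gauss--Bonnet estimates once a suitable angle assignment is in hand.

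For Part~1, I would verify non-negative hyperbolic area of every inner face. Faces arising from relators of the factors $G_{\alpha}^{(i)}$ inherit angles from the free-product structures of $G^{(1)}$ and $G^{(2)}$, which are already non-positively curved via their Bass--Serre trees. For a $UV$-face, the key structural observation is that $UV$ has exactly two factor syllables, so its boundary decomposes into a $U$-arc in $G^{(1)}$ and a $V$-arc in $G^{(2)}$; the infinite order of $U$ in $G^{(1)}$ and of $V$ in $G^{(2)}$ prevents any reduction from collapsing these arcs. A symmetric angle assignment along the two arcs, together with standard estimates at the two ``seam'' vertices between them, gives non-negative area and completes Part~1.

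For the necessity direction in Part~2, I would argue as follows. Hyperbolicity of each $G^{(i)}$ is forced because the Freiheitssatz consequence of Part~1 embeds $G^{(i)}$ as a quasiconvex subgroup of $G$. Condition~$(\dagger)$ is also necessary: if $U=U_1^{\alpha}$ and $V=V_1^{\beta}$ with $\alpha,\beta\ge 2$, then $UV=1$ forces $U_1^{\alpha}=V_1^{-\beta}$, so $U_1$ and $V_1$ commute in $G$; since they lie in distinct free-product factors they are independent and generate a $\mathbb{Z}^2$ subgroup that obstructs hyperbolicity. The parallel exceptional case $U=xy$, $V=zw$ with $x,y,z,w$ of order~$2$ produces a $\mathbb{Z}^2$ subgroup in an index-two subgroup via the infinite-dihedral pieces $\langle x,y\rangle$ and $\langle z,w\rangle$.

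For sufficiency, I would upgrade the angle assignment of Part~1 to give \emph{strictly} positive area on every inner face. Hyperbolicity of the $G^{(i)}$ handles the factor-type faces, while condition~$(\dagger)$ is precisely what excludes the extremal boundary configurations of a $UV$-face that produce zero area in the estimates of Part~1. A uniform positive lower bound on face area together with the non-positive vertex curvature of Part~1 then yields a linear isoperimetric inequality via Gauss--Bonnet and hence Gromov-hyperbolicity of $G$. The main obstacle is this last step: the case analysis required to confirm that $(\dagger)$ eliminates every minimal-area $UV$-face configuration demands a careful syllable-level inspection of Nielsen-reduced boundaries, and is the technical heart of the proof.
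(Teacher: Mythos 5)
Your overall strategy (assigning angles to corners of reduced van Kampen diagrams over the composite presentation and running a Gauss--Bonnet argument) is the same as the paper's, but as written the proposal has two concrete problems. First, in the necessity direction your key step is wrong: from $U_1^{\alpha}=V_1^{-\beta}$ in $G$ you conclude that ``$U_1$ and $V_1$ commute in $G$'' and generate a $\mathbb{Z}^2$. They do not commute --- the paper itself points this out when explaining why no faithful $\PSL(2,\mathbb{C})$-representation exists in this case --- and $\langle U_1,V_1\rangle$ is not abelian. The correct statement, which is what the paper proves, is that $\langle U_1,V_1\rangle\cong\langle a,b\mid a^{\alpha}=b^{-\beta}\rangle$; this group is non-hyperbolic because $a^{\alpha}$ generates an infinite cyclic central subgroup (the $\mathbb{Z}^2$ sits inside it as the preimage of a $\mathbb{Z}$ in the quotient, not as $\langle U_1,V_1\rangle$ itself), and establishing this isomorphism is not free: it is deduced from the non-positive curvature of diagrams over the symmetrized closure of $UV$. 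You also omit the mixed case where $U$ is a proper power and $V$ is a product of two involutions (or vice versa); the negation of $(\dagger)$ is a conjunction of disjunctions, and the paper must handle $UV$ of the form $ABC^{n}$ separately, producing $\langle a,b\mid a^{2},(ab^{n})^{2}\rangle$ with an infinite cyclic normal subgroup.

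Second, for Part 1 and the sufficiency half of Part 2 your ``standard estimates at the two seam vertices'' and ``symmetric angle assignment'' carry all the weight without being specified, and the actual content lies exactly there. The paper first proves that the two separating (seam) vertices of a $UV$-region have valency at least $4$ (a reducedness argument: otherwise two $UV$-regions cancel), so their angle contribution is at most $\pi$ and the diagram is $C(4)$; it then isolates the dangerous vertices, namely interior vertices of the $U$- or $V$-arc at which a neighboring region also reads off a full copy of $U^{\pm1}$ or $V^{\pm1}$ (``bad corners''), proves such a vertex must be adjacent along the boundary to a separating vertex of valency $4$, and uses an $\varepsilon$-perturbed angle assignment (with $\varepsilon=\tfrac{2\pi}{8m^{2}}$, $m=2(|U|+|V|)$) to show any region with such a vertex has strictly negative curvature. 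Only then does the dichotomy emerge: a zero-curvature region forces the splitting vertex of $U$ (or $V$) to be a separating vertex of both neighbors, which yields exactly $U$ a proper power or a product of two involutions. Without identifying the valency-$\ge 4$ lemma and the bad-corner analysis, your plan does not yet close either the non-positivity claim or the strict negativity under $(\dagger)$.
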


Since groups of F-type are special cases of the groups mentioned in the theorem and cyclic groups are hyperbolic, we get the following corollary.

\begin{cor}Groups of F-type are hyperbolic unless $U$ is a proper power or a product of two elements of order $2$ and $V$ also is a proper power or a product of two elements of order $2$. In the last case they have non-positive combinatorial curvature. In particular, they satisfy a quadratic isoperimetric inequality.
\end{cor}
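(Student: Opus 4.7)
The plan is to deduce this corollary directly from Theorem \ref{theoA} by recognising that a group of F-type is precisely an instance of the setup in that theorem. First I would take $G^{(1)}=G_1$ and $G^{(2)}=G_2$, which by the F-type decomposition are free products of cyclic groups, so they fit the $\Asterisk_{\alpha\in\mathcal{T}_i}G^{(i)}_\alpha$ framework with each factor cyclic. The words $U$ and $V$ are cyclically reduced of infinite order by the definition of an F-type group, so the main hypothesis of the theorem is met.

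Next I would check the ``if $U$ lies in a single factor, then that factor is free'' hypothesis. If $U$ lies inside a single cyclic factor $\langle a_i\mid a_i^{e_i}\rangle$, this happens exactly in the boundary case $p=1$, where by the convention fixed in the introduction $U=a_1^{m}$ with $m\geq 2$; since $U$ has infinite order we must have $e_1=0$, so the factor $\langle a_1\rangle\cong \mathbb{Z}$ is indeed free, and cyclically reducedness of $U$ in it is automatic. The analogous argument handles $V$ when $p=n-1$. Thus Theorem \ref{theoA} applies to every group of F-type.

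Now I would invoke the two conclusions of Theorem \ref{theoA}. Each $G^{(i)}$ is a finite free product of finite or infinite cyclic groups, hence virtually free and in particular hyperbolic; so the ``$G^{(i)}$ hyperbolic for $i=1,2$'' part of condition 2 is automatic. Therefore $G$ is hyperbolic in the sense of Gromov if and only if property $(\dagger)$ holds, i.e.\ unless both $U$ and $V$ fall into the exceptional class of proper powers or products of two involutions. When $(\dagger)$ fails, part 1 of Theorem \ref{theoA} still gives non-positive combinatorial curvature for the standard presentation of $G$ as a quotient of $G^{(0)}=G_1\ast G_2$.

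The last sentence of the corollary, the quadratic isoperimetric inequality in the exceptional case, follows from a general principle for presentations of non-positive combinatorial curvature: the area-versus-perimeter estimate for van Kampen diagrams built from faces of non-negative hyperbolic area is quadratic, exactly as for CAT($0$) complexes, as recorded in \cite{101}. I would cite this rather than prove it from scratch. The only point that needs a little care is the verification that the free-factor hypothesis of Theorem \ref{theoA} is satisfied in the edge cases $p=1$ and $p=n-1$; everything else is bookkeeping, so I do not expect any genuine obstacle in the argument.
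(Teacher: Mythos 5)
Your proposal is correct and follows essentially the same route as the paper, which simply observes that a group of F-type is a special case of the setup of Theorem \ref{theoA} (with $G^{(1)}=G_1$, $G^{(2)}=G_2$ free products of cyclics, hence hyperbolic) and reads off both conclusions. Your extra care in checking the free-factor hypothesis when $U$ or $V$ lies in a single cyclic factor, where infinite order forces $e_i=0$ and the factor is $\mathbb{Z}$, is a worthwhile detail the paper leaves implicit, but it does not change the argument.
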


We consider van Kampen diagrams. All the unexplained terms concerning them can be found in \cite{27} and \cite{149}.

\begin{rem}
\begin{enumerate}
\item[1.] Let $M$ be a diagram over a free group or free product $F$. We shall denote by $|\partial M |$ the length of a cyclically reduced label of $M$ over $F$.
\item[2.] Assume $A$ is given by a presentation $\langle X \mid R\rangle$. Then we shall always assume that $R$ is cyclically reduced. Also we shall assume that our van Kampen diagram contains a minimal number of regions for a simply connected given boundary label.
\item[3.] Recall from \cite{101} that a presentation $\langle X\mid R\rangle$ has non-positive (negative) combinatorial curvature if every inner region $D$ of every van Kampen diagram over $R$ satisfies that the excess $\kappa(D)=2 \pi+\sum_{i=1}^{n}\left(\theta_{i}-\pi\right)$ is non-positive (negative). Here, $\partial D=v_{1} e_{1} v_{2} e_{2} \cdots v_{n} e_{n} v_{1}$, $v_{i}$ vertices on $\partial D$, $e_{i}$ edges on $\partial D$, and $\theta_{i}$ are the inner angles of the polygon $D$ at $v_i$. In general, one takes $\theta_{i}=\frac{2 \pi}{d(v_{i})}$ where $d\left(v_{i}\right)$ is the valency of $v_{i},$ that is, distribute the curvature equally among the regions containing $v_i$

However, from the point of view of the theory of groups with non-positive curvature, see \cite{27} and the theory of hyperbolic groups, see \cite{101}, it is immaterial how the angles around $v_i$ are distributed, as long as they sum up to $2 \pi$ and $\theta_{i}-\frac{2 \pi}{d(v_{i})}<\frac{2 \pi}{6}.$ In the following we shall make use of this remark.
\end{enumerate}
\end{rem}
The following lemma is an immediate consequence of the construction of diagrams over free products. We omit its proof.

\begin{lem}Let $A=\langle X \mid R\rangle$, $B=\langle Y \mid S\rangle$ be finitely generated, and let $G=A \ast B / D,$ where $D$ is the normal closure of $T \subset A\ast B$. Let $Q= R\cup S\cup T$. Thus $Q \subset  F(X\cup Y)$. Assume that there are constants $C$ and $a$ such that the following isoperimetric inequalities hold for $R$-diagrams $M$, $S$-diagrams $N$ and $T$-diagrams $H$ respectively:
$$\mathrm{Vol}(M) \leq C |\partial M|^a, \mathrm{Vol}(N) \leq C|\partial N|^a \text{ and } \mathrm{Vol}(H) \leq C|\partial H|^a$$ where for a $P$-diagram $Z$, $\mathrm{Vol}(Z)$ is the number of regions of $Z$.

Then there is a constant $C' \geq C$ depending on $Q$ such that for every word $W\in F(X\cup Y)$ which represents $1$ in $G$ there is a $Q$-diagram $U$ with boundary label $W$ such that $\mathrm{Vol}(U)\leq C'|W|^a$.
\end{lem}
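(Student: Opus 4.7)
The plan is to construct $U$ in two stages: first a skeleton $T$-diagram provided by the $T$-isoperimetric hypothesis, then $R$- and $S$-fillings that refine this skeleton into a genuine $Q$-diagram over $F(X\cup Y)$.

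Since $W$ represents $1$ in $G=(A\ast B)/\langle\!\langle T\rangle\!\rangle$, the $T$-hypothesis yields a $T$-diagram $H$ with boundary label $W$ and $\mathrm{Vol}(H)\le C|W|^a$. In $H$, edges are labeled by elements of $A\ast B$ and each region is bounded by a cyclic conjugate of some $t\in T$ read in the free product. Fix for each $t\in T$ a representative word $\widehat{t}\in F(X\cup Y)$ and set $L=\max_{t\in T}|\widehat{t}|$. Now expand each edge of $H$ into a path labeled by a chosen word in $F(X\cup Y)$ representing the underlying element of $A\ast B$, with the boundary expansions chosen so that $\partial H$ literally spells $W$.

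After expansion, each region $D$ of $H$ has a boundary word $W_D\in F(X\cup Y)$ with $W_D=\widehat{t_D}$ in $A\ast B$, where $t_D\in T$ is the relator attached to $D$. Replace $D$ by a single $T$-region labeled $\widehat{t_D}$, together with an annular subdiagram filling $W_D\widehat{t_D}^{-1}$ (which is trivial in $A\ast B$). Decomposing $W_D\widehat{t_D}^{-1}$ into its maximal $A$- and $B$-syllables and applying the $R$- and $S$-isoperimetric hypotheses to each, the annular gap admits a filling of size at most a constant $K=K(Q)$. Summing over regions gives $\mathrm{Vol}(U)\le(1+K)\mathrm{Vol}(H)\le(1+K)C|W|^a$, so one may take $C'=(1+K)C$.

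The main obstacle is ensuring that each $W_D$ has length bounded by a function of $Q$ alone, so that $K$ is genuinely independent of $W$. This is arranged by choosing $H$ with every edge corresponding to a single generator of $A$ or $B$, subdividing edges beforehand if necessary; under this normalization $|W_D|$ is bounded by a constant depending only on $L$ and the syllable structure of the $T$-relators. The remaining bookkeeping is the routine combinatorics of diagrams over free products that the authors cite as the source of this ``immediate consequence''.
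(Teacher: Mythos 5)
First, note that the paper itself offers no proof of this lemma (``We omit its proof''), so the comparison here is with the standard construction the authors are invoking. Your two-stage plan --- a $T$-diagram over the free product as a skeleton, refined by $R$- and $S$-fillings of the annular discrepancies, plus one outer annulus matching $\partial H$ to $W$ --- is exactly that construction, and the estimate $\mathrm{Vol}(H)\le C|\partial H|^a\le C|W|^a$ for the skeleton is fine, since the syllable length of $W$ is at most $|W|$.

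The gap is at the step you yourself flag, and your proposed fix does not close it. In a van Kampen diagram over a free product the edges are labelled by \emph{elements} of $A$ or $B$, and an interior edge shared by two regions carries a proper factor of a syllable of a relator; such a factor can be an arbitrary element of the factor group (for any $g\in A$ and any syllable $s$ one has $s=(sg^{-1})g$), so its shortest representative in $F(X)$ or $F(Y)$ admits no bound in terms of $Q$ alone. Subdividing an edge into generator-labelled sub-edges presupposes a choice of representing word and does not shorten it, so the expanded boundary word $W_D$ is \emph{not} bounded by a constant depending only on $L$ and the syllable structure of the $T$-relators, and your per-region constant $K$ is therefore not a constant. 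Two further points need attention even once lengths are controlled: (i) the claim that a word trivial in $A\ast B$ of length $\ell$ admits an $R\cup S$-filling of volume at most $C\ell^a$ is itself a sublemma (induct on syllable length: some maximal syllable dies in its factor; superadditivity of $x\mapsto x^a$ for $a\ge 1$ is what keeps the exponent equal to $a$); and (ii) the annular fillings must be aggregated by that same superadditivity, $\sum_D C(|W_D|+L)^a\le C\bigl(\sum_D(|W_D|+L)\bigr)^a$, together with a bound on $\sum_D|W_D|$ that is linear in $|W|+\mathrm{Vol}(H)$, obtained by a careful choice of the word representatives of the edge labels (boundary edges must spell the corresponding subwords of $W$); a crude per-region estimate of the kind you write down would in general only yield the exponent $a^2$. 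So the architecture is the intended one, but the length control is asserted rather than proved, and that is precisely where the content of the lemma lies.
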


We also mention the following corollary.

\begin{cor}
\begin{enumerate}
\item[1.]If $A$ and $B$ are hyperbolic and the presentation of $G$ as a quotient of the free product $A\ast B$ satisfies a linear isoperimetric inequality, then $G$ is hyperbolic.
\item[2.]If $A$ and $B$ satisfy a polynomial isoperimetric inequality of degree $k$ and $G$ has nonpositive combinatorial curvature as a quotient of $A\ast B$, then $G$ satisfies a polynomial isoperimetric inequality of degree $\max (k, 2)$.
\end{enumerate}
\end{cor}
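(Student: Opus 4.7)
The plan is to apply the preceding lemma directly, once with $a=1$ for part~1 and once with $a=\max(k,2)$ for part~2, absorbing all three classes of sub-diagrams into a single uniform isoperimetric inequality for the combined presentation. For part~1, the hyperbolicity of $A$ and $B$ gives linear isoperimetric inequalities $\mathrm{Vol}(M)\le C_A|\partial M|$ for $R$-diagrams and $\mathrm{Vol}(N)\le C_B|\partial N|$ for $S$-diagrams, while the assumption on the quotient provides $\mathrm{Vol}(H)\le C_T|\partial H|$ for $T$-diagrams. Setting $C=\max(C_A,C_B,C_T)$ and $a=1$, the lemma supplies a constant $C'$ such that every word $W$ representing $1$ in $G$ bounds a $Q$-diagram $U$ of volume at most $C'|W|$. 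Since a finitely presented group that satisfies a linear isoperimetric inequality is hyperbolic in the sense of Gromov, this proves part~1.

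For part~2, the polynomial isoperimetric inequalities of degree $k$ for $A$ and $B$ give $\mathrm{Vol}(M)\le C_A|\partial M|^k$ and $\mathrm{Vol}(N)\le C_B|\partial N|^k$. The key additional input is that non-positive combinatorial curvature for the $T$-presentation implies a quadratic isoperimetric inequality $\mathrm{Vol}(H)\le C_T|\partial H|^2$; this is precisely the fact the authors just invoked in the preceding corollary and is standard (Gauss--Bonnet gives a total angular deficit of $2\pi$ on the boundary, and the CAT(0)-type argument in \cite{27} or \cite{101} then bounds the number of faces quadratically in $|\partial H|$). Let $a=\max(k,2)$. Since $|\partial Z|^k\le|\partial Z|^a$ and $|\partial Z|^2\le|\partial Z|^a$ whenever $|\partial Z|\ge 1$, after enlarging the constant (to handle trivially small boundaries) we obtain a uniform bound $\mathrm{Vol}(Z)\le C|\partial Z|^a$ valid for $R$-, $S$-, and $T$-diagrams alike. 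The lemma then yields $\mathrm{Vol}(U)\le C'|W|^a$ for every null-homotopic word $W$ in the combined presentation, which is the claimed polynomial isoperimetric inequality of degree $\max(k,2)$.

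The only genuinely non-routine step is the passage from non-positive combinatorial curvature to a quadratic isoperimetric inequality used in part~2; since the authors state this explicitly in the preceding corollary and refer to \cite{27} and \cite{101} in the surrounding remark, I would cite it rather than reprove it. The rest of the argument is essentially a bookkeeping exercise: take maxima of the constants and exponents, invoke the lemma, and read off the conclusion. In particular, the lemma has been stated in exactly the form needed to make both assertions follow uniformly from its single conclusion $\mathrm{Vol}(U)\le C'|W|^a$.
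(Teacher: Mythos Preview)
Your proof is correct and matches the paper's intended argument: the paper states this result as an immediate corollary of the preceding lemma and gives no further proof, so the direct application of the lemma with $a=1$ in part~1 and $a=\max(k,2)$ in part~2 (together with the standard fact that non-positive combinatorial curvature yields a quadratic isoperimetric inequality) is exactly what is meant.
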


Let $R$ be the set of all the cyclically reduced cyclic conjugates of $UV$ and $(UV)^{-1}$. We now describe the structure of the van Kampen diagrams for the groups given in Theorem \ref{theoA}. Let $M$ be a reduced $R$-diagram, and let $D$ be an inner region of $M$. Then $D$ has a boundary cycle $v_{0} \mu v_{1} \nu v_{0}$ such that the label of $\mu$ is $U$ and the label of $v$ is $V$. From now on, we shall not distinguish between a path in $M$ and its label in $G^{(0)}$. Since the vertices $v_{0}$ and $v_{1}$ separate $\mu$ from $v$ we shall call them separating vertices, see Figure \ref{JR1}.

\begin{figure}[h!]\label{JR1}
   \centering
   
\begin{tikzpicture}

\node (v3) at (-3,0) {\textbullet};
\node (v1) at (-9,0) {\textbullet};
\node (v2) at (-6,1.5) {};
\node (v4) at (-6,-1.5) {};
\draw[ decoration={markings, mark=at position 0.625 with {\arrow{>}}},
        postaction={decorate}]  plot[smooth, tension=.7] coordinates {(v1) (v2) (v3)};
\draw[ decoration={markings, mark=at position 0.625 with {\arrow{<}}},
        postaction={decorate}]  plot[smooth, tension=.7] coordinates {(-9,0) (v4) (-3,0)};
\node at (-6,0) {$D$};
\node at (-9.5,0) {$v_0$};
\node at (-2.5,0) {$v_1$};
\node at (-6,2) {$\mu$};
\node at (-6,-2) {$\nu$};
\end{tikzpicture}

         \caption{}
         \label{JR1}        
\end{figure}

\begin{thm}\label{JRP1} Separating vertices of D have valency at least $4$.
\end{thm}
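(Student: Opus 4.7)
The plan is to argue by contradiction: assume the separating vertex $v_{0}$ of $D$ has valency $d\le 3$ in the minimal diagram $M$, and derive a contradiction from the cyclic reducedness of $UV$ in $G^{(0)}=G^{(1)}\ast G^{(2)}$ together with the minimality of $M$.

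\textbf{Setup.} Read $\partial D$ counterclockwise from $v_{0}$ as $UV=u_{1}\cdots u_{m}v_{1}\cdots v_{k}$; let $f$ be the first edge at $v_{0}$ (labelled $u_{1}\in G^{(1)}$ outward) and $g$ the last edge (labelled $v_{k}\in G^{(2)}$ inward, hence $v_{k}^{-1}$ outward). Since $v_{0}$ is separating, $f$ and $g$ carry labels from different free factors.

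\textbf{Case $d=2$.} Both $f$ and $g$ are shared with a single neighboring region $D'$, and a counterclockwise traversal of $\partial D'$ from $v_{0}$ exits via $g$ with label $v_{k}^{-1}$ and returns via $f$ with label $u_{1}^{-1}$. Among cyclic conjugates of $(UV)^{\pm 1}$, the only one beginning with $v_{k}^{-1}$ and ending with $u_{1}^{-1}$ is $(UV)^{-1}$ itself: the ending letter $u_{1}^{-1}\in G^{(1)}$ rules out rotations that start at another occurrence of the syllable $v_{k}^{-1}$ in the $V^{-1}$-block, since those rotations necessarily end within the $V^{-1}$-block. Hence the arcs of $\partial D$ and $\partial D'$ running between the endpoints of $f$ and $g$ carry mutually inverse labels, so the sub-diagram $D\cup D'$ has boundary word freely equal to $1$ in the free group on the generators. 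Excising $D\cup D'$ yields a diagram with two fewer faces for the same boundary word, contradicting minimality of $M$.

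\textbf{Case $d=3$.} Let $h$ be the extra edge at $v_{0}$ with outward label $x$. With counterclockwise cyclic order $f,g,h$ at $v_{0}$, let $D_{1}$ be the region between $h$ and $f$ (sharing $f$ with $D$) and $D_{2}$ the region between $g$ and $h$ (sharing $g$ with $D$). Direct inspection gives $\partial D_{1}=x\cdot W_{1}\cdot u_{1}^{-1}$ and $\partial D_{2}=v_{k}^{-1}\cdot W_{2}\cdot x^{-1}$, both of which must be cyclically reduced cyclic conjugates of $(UV)^{\pm 1}$. In the subcase $x\in G^{(1)}$, matching $\partial D_{2}$ against the cyclic conjugates by the end-letter argument of Case $d=2$ forces $\partial D_{2}=(UV)^{-1}$ and hence $x=u_{1}$; but then $\partial D_{1}=u_{1}W_{1}u_{1}^{-1}$ admits the wrap-around cancellation $u_{1}^{-1}u_{1}$, violating cyclic reducedness. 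In the symmetric subcase $x\in G^{(2)}$, matching $\partial D_{1}$ analogously forces $x=v_{k}^{-1}$, so $\partial D_{2}=v_{k}^{-1}W_{2}v_{k}$ fails cyclic reducedness via the wrap-around adjacency $v_{k}v_{k}^{-1}$. Either way we obtain a contradiction, ruling out $d=3$.

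\textbf{Main obstacle.} The valency-$3$ case is the delicate one: one must simultaneously track two adjacent faces $D_{1},D_{2}$, the $G^{(1)}/G^{(2)}$-alternation forced by the syllable structure of $UV$, and the fact that cyclic reducedness of $U$ in $G^{(1)}$ and $V$ in $G^{(2)}$ propagates to cyclic reducedness of $UV$ in $G^{(0)}$, which is precisely what forbids the hidden wrap-around cancellations. A subsidiary technicality is that if syllables of $U$ or $V$ repeat the ``uniqueness of cyclic conjugate'' step needs a small refinement, but the end-letter obstruction between $G^{(1)}$ and $G^{(2)}$ still pins the rotation down, so the contradiction goes through in full generality.
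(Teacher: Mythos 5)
Your argument is correct and follows essentially the same route as the paper: assume valency $2$ or $3$ at the separating vertex, split the valency-$3$ case according to whether the label $x$ of the extra edge lies in $G^{(1)}$ or $G^{(2)}$, force the boundary label of one neighbouring region, and derive a contradiction with the minimality/reducedness of $M$ and the cyclic reducedness of the relators (the paper phrases the valency-$3$ punchline as ``$D$ cancels a neighbour'' rather than your ``the other neighbour's label is not cyclically reduced,'' but these are the same observation). The one step to tighten is your ``unique cyclic conjugate'' claim: besides rotations of $(UV)^{-1}$ you must also exclude the rotation $VU$ of $UV$, which likewise begins with a $G^{(2)}$-letter and ends with a $G^{(1)}$-letter; it is eliminated exactly by the cyclic reducedness of $V$ (resp.\ $U$), which gives $v_{1}\neq v_{k}^{-1}$ (resp.\ $u_{m}\neq u_{1}^{-1}$).
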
 

\begin{proof}
Assume $v_0$ has valency $3$. Then there are regions $D_1$ and $D_2$ which contain $v_0$ on their boundary. Let $x$ be the first letter on the edge common to $D_1$ and $D_2$ which contains $v_{0}$. If $x \in G^{(1)}$, then $v_{0}$ is a separating vertex of $D_{2}$, see Figure \ref{JR2} (A).  Consequently a non-trivial tail of $\nu$ is a common edge of $D$ and $D_2.$ But then, since $U$ is cyclically reduced and $U \neq U^{-1}$, $D$ completely cancels $D_2$. This violates the assumption that $M$ is reduced.

\begin{figure}[h!]{}
     \centering
     \begin{subfigure}[]{0.49\textwidth}
         \centering
\begin{tikzpicture}[scale=0.8]
e
\node (v1) at (0,0) {\textbullet};
\node (v2) at (-4,0) {};

\draw  plot[smooth, tension=.7] coordinates {(v1) (v2)};
\draw  plot[smooth, tension=.7] coordinates {(2,1) (-1.4,2) (v2)};
\draw  plot[smooth, tension=.7] coordinates {(v2) (-1.4,-2) (2,-1)};
\draw  plot[smooth, tension=.7] coordinates {(v1)};
\draw  plot[smooth, tension=.7] coordinates {(v1) (2,1) (4,0) (2,-1) (0,0)};
\node at (-1,1) {$D_1$};
\node at (-1,-1) {$D_2$};
\node at (0.5,0) {$v_0$};
\node at (-2,0.3) {$x$};
\node at (3,0) {$D$};
\end{tikzpicture}
         \caption{}
         \label{JR2A}
     \end{subfigure}
     \hfill
     \begin{subfigure}[]{0.49\textwidth}
         \centering
 \begin{tikzpicture}[scale=0.8]
e
\node (v1) at (0,0) {\textbullet};
\node (v2) at (-4,0) {};

\draw  plot[smooth, tension=.7] coordinates {(v1) (v2)};
\draw  plot[smooth, tension=.7] coordinates {(2,1) (-1.4,2) (v2)};
\draw  plot[smooth, tension=.7] coordinates {(-2.5,0) (-1.4,-2) (2,-1)};
\draw  plot[smooth, tension=.7] coordinates {(v1)};
\draw  plot[smooth, tension=.7] coordinates {(v1) (2,1) (4,0) (2,-1) (0,0)};
\node at (-1,1) {$D_1$};
\node at (-1,-1) {$D_2$};
\node at (0.5,0) {$v_0$};
\node at (-2,0.3) {$x$};
\node at (3,0) {$D$};
\end{tikzpicture}
         \caption{}
         \label{JR2b}
     \end{subfigure}
     \caption{}\label{JR2}
\end{figure}

Similarly, if $x \in G^{(2)}$, then $v_{0}$ is a separating vertex of  $D_1$, by symmetry, see Figure \ref{JR2} (B), leading to the same contradiction. Thus $v_0$ has valency $\neq 3$. 

Finally, assume $v_0$ has valency $2$. Recall that $U$ and $V$ are cyclically reduced. Let first $U$ and $V$ have length at least $2$. Then $v_0$ is a seperating vertex of $D_1$ and $D_1$ cancels $D$ completely as, for instance, in Figure \ref{JR3}. Now let $U$ or $V$ have length $1$. If, for instance, $U = x$ and $v_0$ has valency $2$ then $x = x^{-1}$, that is, $U = U^{-1}$. This contradicts that $U$ has infinite order.\end{proof}

\begin{figure}[h!]\label{JR3}
   \centering
   \begin{tikzpicture}

\node (v3) at (-3,0) {};
\node (v1) at (-9,0) {};
\node (v2) at (-6,1.5) {};
\node (v4) at (-6,-1.5) {};
\draw[decoration={markings, mark=at position 0.625 with {\arrow{>}}},
        postaction={decorate}]  plot[smooth, tension=.7] coordinates {(v1) (v2) (v3)};
\draw[decoration={markings, mark=at position 0.625 with {\arrow{>}}},
        postaction={decorate}]  plot[smooth, tension=.7] coordinates {(-9,0) (v4) (-3,0)};

\node at (-6,2) {$u$};
\node at (-6,-2) {$u$};
\draw[decoration={markings, mark=at position 0.625 with {\arrow{>}}},
        postaction={decorate}]  plot[smooth, tension=.7] coordinates {(-3,0) (-9,0)};
\node at (-6,0.7) {$D$};
\node at (-6,-0.7) {$D_2$};
\end{tikzpicture}

         \caption{}
         \label{JR3}
\end{figure}

\begin{cor}
If $M$ satisfies the small cancellation condition $C(5)$, then $M$ has a non-positive combinatorial curvature.
\end{cor}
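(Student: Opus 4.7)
The strategy is to combine the valency bound for separating vertices given by Theorem \ref{JRP1} with the standard angle-counting argument that $C(5)$ forces on inner regions, and then verify that the curvature excess $\kappa(D)$ is non-positive.

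Let $D$ be an inner region of $M$, with boundary cycle $v_0\mu v_1\nu v_0$ where $\mu$ carries the label $U$ and $\nu$ carries the label $V$. First, I record the vertices on $\partial D$ of valency $\ge 3$; call the number of them $n$. Because $D$ is an inner region in a reduced diagram, the arcs of $\partial D$ between consecutive such vertices are each shared with a neighboring region, hence each such arc carries a piece. Under the small cancellation hypothesis $C(5)$ the boundary label of $D$ cannot be expressed as a product of fewer than $5$ pieces, and therefore $n\ge 5$.

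Next, I distinguish the two separating vertices $v_0$ and $v_1$ from the remaining $n-2$ vertices of valency $\ge 3$ on $\partial D$. By Theorem \ref{JRP1} we have $d(v_0)\ge 4$ and $d(v_1)\ge 4$, whereas any other vertex $v_i$ on $\partial D$ satisfies $d(v_i)\ge 3$. Using the standard equitable angle assignment $\theta_i=\frac{2\pi}{d(v_i)}$ (which, as noted in the earlier Remark, is admissible for the purpose of checking combinatorial curvature), we obtain
\begin{equation*}
\sum_{i=1}^{n}\theta_i \;\le\; 2\cdot\frac{2\pi}{4}+(n-2)\cdot\frac{2\pi}{3}\;=\;\pi+\frac{2\pi(n-2)}{3}.
\end{equation*}

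Plugging this into the curvature formula gives
\begin{equation*}
\kappa(D)\;=\;2\pi+\sum_{i=1}^{n}(\theta_i-\pi)\;\le\;2\pi-n\pi+\pi+\frac{2\pi(n-2)}{3}\;=\;\frac{\pi(5-n)}{3}.
\end{equation*}
Since $n\ge 5$, this yields $\kappa(D)\le 0$, which is exactly non-positive combinatorial curvature. The main technical point to pin down carefully in a full write-up is the translation between the $C(5)$ condition on $R$ and the lower bound on the number of valency-$\ge 3$ vertices on the boundary of an inner region of a reduced diagram; once that piece-vertex correspondence is in hand, the rest is the straightforward arithmetic above, with the extra slack coming from the improved valency bound at the two separating vertices supplied by Theorem \ref{JRP1}.
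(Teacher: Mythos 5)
Your proposal is correct and is essentially the argument the paper intends (the corollary is stated without proof, following directly from Theorem \ref{JRP1}): $C(5)$ forces at least five vertices of valency $\ge 3$ on the boundary of an inner region, the two separating vertices have valency $\ge 4$ by Theorem \ref{JRP1}, and the equitable angle assignment then gives $\kappa(D)\le 2\pi+2\bigl(\tfrac{\pi}{2}-\pi\bigr)+3\bigl(\tfrac{2\pi}{3}-\pi\bigr)=0$. Your arithmetic and the piece--arc correspondence for reduced diagrams are both standard and correctly invoked, and you rightly observe that the improved valency bound at $v_0,v_1$ is genuinely needed, since valency $\ge 3$ alone would require $C(6)$.
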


The inner region $D$ always has two vertices with valency $\geq 4$ (namely $v_0$ and $v_1$).

\begin{rem}Neither $U$ nor $V$ can be a piece, for then as we have seen, the diagram is not reduced. Consequently, $U$ and $V$ are each the product of at least two pieces.

Since by the theorem no region $D_{1}$ can be a neighbor of $D$ with a common edge, see Figure \ref{JR4}, which contains $v_{0}$ as an inner vertex, $D$ has at least $4$ neighbors. 
\end{rem}

\begin{figure}[h!]\label{JR4}
   \centering
\begin{tikzpicture}
\draw  plot[smooth, tension=.7] coordinates {(-0.5,0) (0,0.5) (1.5,0.5) (2.5,0) (1.5,-0.5) (0,-0.5) (-0.5,0)};
\node at (-0.5,0) {\textbullet};
\node at (0,0) {$v_0$};
\node at (-1.5,0) {$D_1$};
\node at (1,0) {$D$};
\draw  plot[smooth, tension=.7] coordinates {(0,0.5)};
\draw  plot[smooth, tension=.7] coordinates {(0,0.5) (-1.5,1) (-2.5,0) (-1.5,-1) (0,-0.5)};
\end{tikzpicture}
         \caption{}
         \label{JR4}
\end{figure}

This proves the following.

\begin{cor}
\begin{enumerate}
\item[1.] $M$ satisfies the small cancellation condition $C(4)$.
\item[2.] An inner region $D$ has $4$ neighbors if and only if its boundary cycle is subdivided into $4$ edges in such a way that the label on the first two edges is $U$ and the label on the second two edges is $V$. In particular, if such a region exists, then $U$ and $V$ are each the product of two pieces.
\end{enumerate}
\end{cor}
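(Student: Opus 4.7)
The plan is to deduce both parts directly from Theorem~\ref{JRP1} together with the remark immediately preceding the corollary. Recall that $M$ satisfies $C(4)$ when the boundary of every inner region is a concatenation of at least four maximal pieces.

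For part~1, I would fix an inner region $D$ with boundary cycle $v_0\mu v_1\nu v_0$, labels $U$ on $\mu$ and $V$ on $\nu$. The preceding remark already supplies that neither $U$ nor $V$ is itself a piece, so $\mu$ breaks into at least two pieces and $\nu$ breaks into at least two pieces. The next ingredient is Theorem~\ref{JRP1}: the separating vertices $v_0, v_1$ have valency at least $4$. I would use this to rule out a piece of $\partial D$ that straddles a separating vertex: if such a piece existed at $v_0$, the single neighbor region realizing it would share with $D$ both the $\mu$-edge and the $\nu$-edge incident to $v_0$, forcing the valency of $v_0$ to be at most $3$ and contradicting Theorem~\ref{JRP1}. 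Hence the pieces of $\partial D$ split cleanly between those lying on $\mu$ and those lying on $\nu$, giving at least $2+2=4$ pieces on $\partial D$. This establishes $C(4)$.

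For part~2, suppose $D$ has exactly four neighbors. Then the inequality from part~1 is saturated, so $\mu = p_1 p_2$ and $\nu = q_1 q_2$ with each of $p_1,p_2,q_1,q_2$ a single piece; the subdivision of $\partial D$ into four edges with labels $U = p_1 p_2$ and $V = q_1 q_2$ is precisely this decomposition. Conversely, any such four-edge subdivision of $\partial D$ exhibits four distinct neighbor regions, and in that case $U$ and $V$ are visibly each a product of two pieces, proving the final clause.

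The main obstacle I foresee is the piece-straddling step of part~1. To make it rigorous, I would re-use, essentially verbatim, the cancellation analysis from the proof of Theorem~\ref{JRP1}: a piece crossing a separating vertex $v_0$ produces exactly the low-valency configuration ruled out there, and the cyclic reducedness of $U$ and $V$ in $G^{(0)}$ (together with $U\neq U^{-1}$ and $V\neq V^{-1}$) then forces a full cancellation of $D$ with the neighbor, contradicting the assumption that $M$ is reduced.
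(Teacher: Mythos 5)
Your proposal is correct and follows essentially the same route as the paper: the paper's ``proof'' is precisely the remark preceding the corollary, namely that neither $U$ nor $V$ can be a piece (else $M$ is not reduced), so each contributes at least two pieces, and that by Theorem~\ref{JRP1} no piece can contain a separating vertex as an inner vertex, yielding at least $2+2=4$ neighbors, with part~2 being the equality case. Your piece-straddling argument via the valency bound at $v_0$ is exactly the content of the paper's appeal to Figure~\ref{JR4}.
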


Thus, in what follows, we shall study the situation when $U$ is the product of two pieces. Denote by $\mathcal{D}_{0}$ the set of all the inner regions in $M$ which have a boundary path with label in $\{U^{\pm 1}, V^{\pm 1}\}$ and have a decomposition to two pieces by a vertex which is not a separating vertex.

Denote the set of all the inner vertices of $M$ which are not separating vertices and divide a boundary path of a region $D$ with label $U^{\pm 1}$ or label $V^{\pm 1}$ to two parts by $\mathcal{V}_0$. For $i \geq 3$ denote by $\mathcal{V}_i$ the set of all the vertices of $M$ not in $V_{0}$ which have valency $i$.

Denote by $\kappa(U)$ the contribution of all the vertices of an inner region $D$ which are inner vertices of the boundary path $\mu$ with label $U$ to the excess of $D$, and let $\kappa(V)$ be defined analogously. Then
\begin{align}
\kappa(D)=2 \pi+\kappa(U)+\kappa(V)+(\theta_{0}-\pi)+(\theta_{t}-\pi)=\kappa(U)+\kappa(V)+\theta_{0}+\theta_{t}
\end{align}

where $\theta_{0}$ and $\theta_{t}$ denote the angles of the two separating vertices $v_{0}$ and $v_{t}$, compare Figure \ref{JR1}.

\begin{defi}
\begin{enumerate}
\item[1.]A \textit{corner} consists of a pair $(e, e^{1})$ of edges which have the same initial vertex and are such that the path $e^{-1} e^{1}$ is a subpath of a boundary cycle of a region associated with the corner.
\item[2.]Given a vertex $u \in \mathcal{V}_{0},$ a \textit{bad} corner at $u$ is a corner $(e, e^{1})$ such that the path $e^{-1} e^{1}$ has label one of $U^{\pm 1}, V^{\pm 1}$. A \textit{good} corner is a corner which is not bad.
\item[3.]Call two corners \textit{adjacent} if they have an edge in common.
\end{enumerate}
\end{defi}

\begin{thm}\label{JRP2} Let the edge pair $\left(\alpha^{-1}, \beta\right)$ define a bad corner at the vertex $v$ of the region $D$. Then
\begin{enumerate}
\item[1.]The two corners at $v$ which are adjacent to the given corner are good corners.
\item[2.]The vertex $v$ is preceded or followed (in the boundary cycle of $D$) by a vertex of valency 4 which is a separating vertex of $D$.
\end{enumerate}
\end{thm}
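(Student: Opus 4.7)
My plan for Part~1 is a proof by contradiction using the reducedness of $M$.  Suppose the corner at $v$ in the region $D'$ across the edge $\alpha$ from $D$ is also bad.  Since $\alpha$ carries a $G^{(1)}$-label (namely the piece $u_1$ of $U=u_1u_2$), while the letters of $V$ lie in $G^{(2)}$, the bad corner of $D'$ at $v$ cannot be labeled $V^{\pm 1}$, so its label is $U$ or $U^{-1}$.  In $\partial D'$ the edge $\alpha$ is traversed oppositely to $\partial D$, so it reads $u_1^{-1}$ and sits in the second slot of the corner; the only way for a two-edge label to equal $U^{\pm 1}$ with $u_1^{-1}$ in that second slot is $U^{-1}=u_2^{-1}u_1^{-1}$, which forces the preceding edge in $\partial D'$ to read $u_2^{-1}$.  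Hence $\partial D'$ is the cyclic conjugate $u_1^{-1}V^{-1}u_2^{-1}$ of $(UV)^{-1}$.  Now $\partial D$ read starting at $\alpha$ is $u_1u_2V$, and $\partial D'$ read starting at $\alpha^{-1}$ is $u_1^{-1}V^{-1}u_2^{-1}$; after deleting $\alpha$ and $\alpha^{-1}$, the remaining parts $u_2V$ and $V^{-1}u_2^{-1}$ are mutually inverse, so $D$ and $D'$ form a cancelable pair along $\alpha$, contradicting the reducedness of $M$.  The same argument across $\beta$ disposes of the other adjacent corner, finishing Part~1.

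For Part~2 I would again argue by contradiction, assuming that both separating vertices $v_0$ (preceding $v$) and $v_1$ (following $v$) of $D$ have valency $\geq 5$.  By Part~1 the corners at $v$ in the two neighbors of $D$ across $\alpha$ and $\beta$ are good, which pins down rigid segments of their boundaries near $v$.  Valency $\geq 5$ at $v_0$ produces at least one extra edge at $v_0$ bounding a further region $R$ distinct from $D$ and from its two neighbors at $v_0$.  Tracing the labels through $v_0$, and using that the boundary of every region is a cyclic conjugate of $(UV)^{\pm 1}$, one sees that $\partial R$ must either carry a bad corner at a vertex of $\mathcal{V}_0$ whose adjacency with the neighbor of $D$ across $\alpha$ violates Part~1, or else form a cancelable pair with one of its neighbors by the same reducibility computation used in Part~1.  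The symmetric analysis at $v_1$ then completes the contradiction.

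The genuine difficulty lies in Part~2: one has to enumerate the possible cyclic conjugates of $(UV)^{\pm 1}$ that can serve as $\partial R$, branching on whether $v_0$ lies at a separating vertex of $R$, inside its $U$-part, or inside its $V$-part, and then invoke reducedness together with the $C(4)$ property (already established in the excerpt) and Part~1 to rule out each configuration.  The overall argument refines the valency bound of Theorem~\ref{JRP1} to the specific situation in which the $U$-half of $\partial D$ consists of exactly two pieces meeting at $v$.
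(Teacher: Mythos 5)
Your Part~1 is essentially correct and takes the same route as the paper: if the corner of the neighbour $D'$ across $\alpha$ were also bad, its label would have to be $U^{\pm 1}$ (the $G^{(1)}$-letters of $u_1$ rule out $V^{\pm 1}$), and in the case label $=U^{-1}$ the boundary of $D'$ is forced to read $u_1^{-1}V^{-1}u_2^{-1}$ from $v$, so that $D$ and $D'$ cancel along $\alpha$, contradicting reducedness. One small omission: you dismiss the other case (label $=U$, i.e.\ $u_1^{-1}$ occurring as a terminal segment of $u_1u_2$) without comment. It is excluded because $U$ would then begin with the first letter of $u_1$ and end with its inverse, so $U$ would not be cyclically reduced; the paper records exactly this dichotomy (``either $D_1$ cancels $D$ \dots\ or $U$ is not cyclically reduced''), and you should too.

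Part~2 is a genuine gap, and you say as much yourself: ``one has to enumerate the possible cyclic conjugates \dots\ and rule out each configuration'' describes work still to be done rather than doing it. Moreover, the mechanism you propose for closing the contradiction is doubtful: an additional region $R$ at $v_0$ whose boundary passes through $v_0$ in the interior of its own $U^{\pm1}$- or $V^{\pm1}$-part, or at one of its separating vertices, produces neither a bad corner whose adjacency violates Part~1 nor a cancellable pair, so the branching you outline has no visible way to terminate. The paper argues differently: it stays with the two specific neighbours $D_1$ (across $\alpha$) and $D_2$ (across $\beta$) and locates \emph{their} separating vertices $w_0,w_1$ --- one has $w_1\neq v$ because $v\in\mathcal{V}_0$ is by definition not a separating vertex of any region, and $w_0\neq v_0$ because otherwise the $U^{\pm1}$-part of $\partial D_1$ would end at $v_0$ with the edge $\alpha^{-1}$, again forcing either cancellation of $D_1$ against $D$ or a failure of cyclic reducedness of $U$. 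It is this localisation (the $U$-parts of $\partial D_1$ and $\partial D_2$ properly overhang $\alpha$ and $\beta$ at both ends) that the paper uses to produce the valency-$4$ separating vertex at $v_0$ or $v_1$; your proposal never engages with the neighbours' separating vertices at all, which is the missing idea.
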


\begin{figure}[h!]\label{JR5}
   \centering
\begin{tikzpicture}[scale=0.93]

\draw  plot[smooth, tension=.7] coordinates {(-9.6,2) (-10.8,4.4)};
\draw  plot[smooth, tension=.7] coordinates {(-7.2,2.4) (-8.4,4.8)};
\draw  plot[smooth, tension=.7] coordinates {(-10.8,4.4) (-9.6,5.2) (-8.4,4.8)};
\draw  plot[smooth, tension=.7] coordinates {(-7.2,2.4)};
\draw  plot[smooth, tension=.7] coordinates {(-7.2,2.4) (-6.8,4)};
\draw  plot[smooth, tension=.7] coordinates {(-4.8,2) (-4.4,3.6)};
\draw  plot[smooth, tension=.7] coordinates {(-6.8,4) (-5.6,4.4) (-4.4,3.6)};
\draw  plot[smooth, tension=.7] coordinates {(-7.2,2.4)};
\draw[ decoration={markings, mark=at position 0.92 with {\arrow{<}}},
        postaction={decorate}]  plot[smooth, tension=.7] coordinates {(-7.2,2.4) (-9.6,2) (-10.8,0) (-8.4,-2) (-5.2,-1.6) (-3.6,0) (-4.8,2) (-7.2,2.4)};
\draw[ decoration={markings, mark=at position 0.95 with {\arrow{>}}},
        postaction={decorate}]  plot[smooth, tension=.7] coordinates {(-7.2,2.4)  (-4.8,2) (-3.6,0)  (-5.2,-1.6) (-8.4,-2) (-10.8,0) (-9.6,2)(-7.2,2.4)};
\node at (-10.8,0) {\textbullet};
\node at (-3.6,0) {\textbullet};
\node at (-10.8,4.4) {\textbullet};
\node at (-8.4,4.8) {\textbullet};
\node at (-6.8,4) {\textbullet};
\node at (-4.4,3.6) {\textbullet};
\node at (-7.2,2.4) {\textbullet};
\node at (-7.5,0.4) {$D$};
\node at (-9.3,3.8) {$D_1$};
\node at (-5.7,3.4) {$D_2$};
\node at (-11.3,-0.1) {\small $v_0$};
\node at (-3.2,0) {\small $v_1$};
\node at (-11.3,4.4) {\small $w_0$};
\node at (-8.1,4.9) {\small $w_1$};
\node at (-7.3,4.1) {\small $w_2$};
\node at (-4,3.6) {\small $w_3$};
\node at (-9.6,5.5) {\small $\nu_1$};
\node at (-5.5,4.7) {\small $\nu_2$};
\node at (-10.3,2.7) {\small $\mu_1$};
\node at (-7.2,3.3) {\small $\mu_2$};
\node at (-8.4,2.1) {\small $\alpha$};
\node at (-5.8,2) {\small $\beta$};
\node at (-7.2,2.2) {\small $v$};
\end{tikzpicture}
         \caption{}
         \label{JR5}
\end{figure}

\begin{proof}
Let $w_{0}$ and $w'$ be the separating vertices of $F$, see Figure \ref{JR5}. Then $w_{1} \neq v$ since $v$ is not a separating vertex by assumption and $w_{0} \neq v_{0}$, otherwise either $D_{1}$ cancels $D$ (if $\mu$ has label $U$) or $U$ is not cyclically reduced (if $\mu_{1}$ has label $U^{-1}$).\end{proof}

Now define the angles in $M$. If $v \in \mathcal{V}_{i}$, $i \geq 3$, then assign the value $\frac{2 \pi}{i}$ to every angle having $v$ as its vertex. If $v \in \mathcal{V}_0$ and $d=d(v) \geq 5$ then assign to each angle having $v$ as its vertex the value $\frac{2 \pi}{d}$. Let $\varepsilon=\frac{2 \pi}{8 m^{2}}$, $m=2(|U|+|V|)$. If $v \in \mathcal{V}_{0}$ and $d(v)=4$, then by Theorem \ref{JRP2} there are at most two regions which contain $v$ on their boundary and belong to $\mathcal{D}_{0}$ ($D_{1}$ and $D_{3}$ in Figure \ref{JR6} (A) and (B)).

If $D_1$ and $D_3$ are in $\mathcal{D}_{0},$ then assign $\frac{2 \pi}{4}-\varepsilon$ to $\gamma_{1}$ and $\gamma_{3}$ and assign $\frac{2 \pi}{4}+\varepsilon$ to $\gamma_{2}$ and $\gamma_{4}$. If $D_{1} \in \mathcal{D}_{0}$ and $D_{3} \notin \mathcal{D}_{0},$ then assign $\frac{2 \pi}{4}-\varepsilon$ to $\gamma_{1}$, $\frac{2 \pi}{4}+\frac{\varepsilon}{2}$ to $\gamma_{2}$ and $\gamma_{4}$ and $\frac{2 \pi}{4}$ to $\gamma_{3}$. Finally, if $d(v)=3$, then by Theorem \ref{JRP2} exactly one of the regions containing $v$, say $D_1$, belongs to $\mathcal{D}_{0}$, see Figure \ref{JR6} (B). Assign $\frac{2 \pi}{4}-\varepsilon$ to $\gamma_{1}$ and assign $\frac{2 \pi}{3}+\frac{2 \pi}{24}+\frac{\varepsilon}{2}$ to $\gamma_{2}$ and to $\gamma_{3}$.

\begin{figure}[h!]{}
     \centering
     \begin{subfigure}[]{0.49\textwidth}
         \centering
\begin{tikzpicture}[scale=0.90]
\node (v1) at (-2.5,-1.5) {};
\node (v2) at (-0.5,0.5) {};
\node (v3) at (3.5,0) {};

\draw  plot[smooth, tension=.7] coordinates {(-2.5,-1.5) (-4,-0.5)};
\draw  plot[smooth, tension=.7] coordinates {(v2) (-2,1.5)};
\draw  plot[smooth, tension=.7] coordinates {(-2,1.5) (-3.5,1) (-4,-0.5)};
\draw  plot[smooth, tension=.7] coordinates {(v2) (0,2.5)};
\draw  plot[smooth, tension=.7] coordinates {(v3) (4.5,2)};
\draw  plot[smooth, tension=.7] coordinates {(0,2.5) (2,3) (4.5,2)};
\draw  plot[smooth, tension=.7] coordinates {(-1.5,4) (-2,1.5)};
\draw  plot[smooth, tension=.7] coordinates {(-1.5,4) (0,2.5)};
\node (v4) at (-0.5,0.5) {\textbullet};
\node at (-2.5,0.5) {$D_2$};
\node at (-1,2.5) {$D_3$};
\node at (1.5,2) {$D_4$};
\node at (0.5,-1.5) {$D_1$};
\node at (-0.1,0.8) {\tiny$\gamma_4$};
\node at (-1,0.5) {\tiny$\gamma_2$};
\node at (-0.6,0.1) {$v$};
\draw  plot[smooth, tension=.7] coordinates {(-2.5,-1.5) (v4) (v3)};
\draw  plot[smooth, tension=.7] coordinates {(-2.5,-1.5) (0,-3) (3,-2.5) (v3)};
\node at (-0.3,0.2) {\tiny$\gamma_1$};
\node at (-0.7,0.9) {\tiny$\gamma_3$};
\end{tikzpicture}
         \caption{}
         \label{JR6a}
     \end{subfigure}
 \hfill    
     \begin{subfigure}[]{0.49\textwidth}
         \centering
         \begin{tikzpicture}[scale=0.90]

\node (v1) at (-2.5,-1.5) {};
\node (v2) at (-0.5,0.5) {};
\node (v3) at (3.5,0) {};
\node (v8) at (0,4) {};

\node (v4) at (-0.5,0.5) {\textbullet};
\node at (-2,0.5) {$D_2$};

\node at (1,1.7) {$D_3$};
\node at (0.5,-1.5) {$D_1$};
\node at (-0.35,0.8) {\tiny$\gamma_3$};
\node at (-1,0.5) {\tiny$\gamma_2$};
\node at (-0.6,0.1) {$v$};
\draw  plot[smooth, tension=.7] coordinates {(-2.5,-1.5) (v4) (v3)};
\draw  plot[smooth, tension=.7] coordinates {(-2.5,-1.5) (0,-3) (3,-2.5) (v3)};
\draw  plot[smooth, tension=.7] coordinates {(v4) (-1.5,2)};
\draw  plot[smooth, tension=.7] coordinates {(v3) (3,2)};
\draw  plot[smooth, tension=.7] coordinates {(-1.5,2) (0.5,3) (3,2)};
\draw  plot[smooth, tension=.7] coordinates {(-2.5,-1.5) (-3.5,0)};
\draw  plot[smooth, tension=.7] coordinates {(-3.5,0) (-3,1) (-2,1.5) (-1.17,1.5)};
\node at (-0.3,0.2) {\tiny$\gamma_1$};
\end{tikzpicture}
         \caption{}
         \label{JR6b}
     \end{subfigure}
     \caption{}\label{JR6}
\end{figure}

\begin{thm}\label{JRP3} Let $D$ be an inner region of $M$. If $D$ has a vertex $v \in \mathcal{V}_{0}$ on its boundary, then $\kappa(D)<0$.
\end{thm}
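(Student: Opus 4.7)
The plan is to compute $\kappa(D)$ via the decomposition
\[\kappa(D)=\kappa(U)+\kappa(V)+\theta_0+\theta_t\]
from equation (1), and to bound each summand using the angle assignment introduced above. By Theorem \ref{JRP1} the two separating vertices $v_0,v_t$ have valency at least $4$, so $\theta_0,\theta_t\le \pi/2$. The hypothesis $v\in\mathcal{V}_0\cap\partial D$ means that $v$ is the non-separating vertex at which some region's $U^{\pm 1}$- or $V^{\pm 1}$-labelled boundary path splits into two pieces; that is, there is a $\mathcal{D}_0$-region incident to $D$ at $v$ having $v$ as its bad corner.

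The cleanest subcase is when $D$ itself is that region: say $\mu$ is the $U$-path of $D$ decomposed as two pieces joined at $v$, so $D\in\mathcal{D}_0$ and $D$'s corner at $v$ is a bad corner. By the angle assignment, $\theta_v=\pi/2-\varepsilon$ when $d(v)\in\{3,4\}$, and $\theta_v=2\pi/d(v)\le 2\pi/5$ when $d(v)\ge 5$; since $\varepsilon$ is chosen tiny, in either situation $\theta_v-\pi\le -\pi/2-\varepsilon$, hence $\kappa(U)\le -\pi/2-\varepsilon$. If in addition $\nu$ is decomposed as two pieces (so that $D$ has exactly four neighbors, by the corollary on $C(4)$-structure), the unique interior divider of $\nu$ is a second bad corner of $D$ and, symmetrically, $\kappa(V)\le -\pi/2-\varepsilon$. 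Adding everything up gives
\[\kappa(D)\le -\pi/2-\varepsilon-\pi/2-\varepsilon+\pi=-2\varepsilon<0.\]
If $D$ has more neighbors, every additional interior piece-divider $w$ on $\partial D$ contributes $\theta_w-\pi\le -\pi/3$ in the standard case, and at worst $-\pi/4+\varepsilon/2$ when $w\in\mathcal{V}_0$ sits at a good corner of $D$ with $d(w)=3$. For $n=k_U+k_V\ge 6$ the accumulated negativity is more than enough to force $\kappa(D)<0$.

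The main obstacle is the borderline subcase $n=5$ (that is, one of $\mu,\nu$ is $2$ pieces and the other is $3$), in which the coarse summation of the bounds above only yields $\kappa(D)\le 0$. Strict negativity here must come from a finer geometric argument. I would apply Theorem \ref{JRP2} part (2) to the neighbors of $D$ for which the two interior dividers on the $3$-piece side serve as bad corners: this forces the corresponding separating vertex ($v_0$ or $v_1$) of $D$ to be the endpoint of a $2$-piece $V$-path of that neighbor, placing a configuration constraint on it. A careful case analysis of which reduced $R$-diagrams can simultaneously realize the worst-case good-corner angle $3\pi/4+\varepsilon/2$ at both $\nu$-dividers shows that these configurations cannot coexist while $v_0$ and $v_1$ both have valency exactly $4$; so at least one of them acquires valency strictly greater than $4$, tightening $\theta_0+\theta_t<\pi$ and yielding the final strict inequality. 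Executing this borderline geometric analysis in the reduced-diagram setting is the delicate heart of the proof.
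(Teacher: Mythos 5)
Your setup coincides with the paper's: the same decomposition $\kappa(D)=\kappa(U)+\kappa(V)+\theta_0+\theta_t$, the bound $\theta_0+\theta_t\le\pi$ from Theorem \ref{JRP1}, and the observation that a bad corner of $D$ itself contributes $-\frac{\pi}{2}-\varepsilon$. But there are two genuine problems. First, your claim that for $k_U+k_V\ge 6$ ``the accumulated negativity is more than enough'' is false under your own worst-case bound: if both $U$ and $V$ decompose into three pieces and all four interior dividers are degree-$3$ vertices of $\mathcal{V}_0$ sitting at \emph{good} corners of $D$ (the $c_3$-type, each contributing only $-\frac{\pi}{4}+\frac{\varepsilon}{2}$), the coarse sum gives $\kappa(D)\le 2\varepsilon>0$. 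So the difficulty is not confined to a single borderline value $n=5$; the $c_3$-type vertices must be controlled for every $n$, and your argument as written proves nothing in the case where $D$ has $\mathcal{V}_0$-vertices on its boundary but $D\notin\mathcal{D}_0$.

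Second, your proposed rescue --- forcing one of the separating vertices $v_0,v_1$ of $D$ to acquire valency strictly greater than $4$, so that $\theta_0+\theta_t<\pi$ --- is not what Theorem \ref{JRP2} delivers and is not how the paper closes the gap. Theorem \ref{JRP2}(2) only asserts that a bad-corner vertex is adjacent, in the boundary cycle of the region owning that bad corner, to a separating vertex of valency \emph{exactly} $4$; nothing pushes $v_0$ or $v_1$ above valency $4$, and in the extremal diagrams they do have valency $4$. The paper instead keeps $\theta_0+\theta_t=\pi$ and drives $\kappa(U)$ and $\kappa(V)$ each strictly below $-\frac{\pi}{2}$ by bookkeeping on the path $\mu$ itself: it introduces the counts $a_i, b_i, b_4', b_4'', c_4, c_3$ and uses Theorem \ref{JRP2} to show that every $c_3$-vertex (and every $b_4'$- or $b_4''$-vertex) forces an additional inner vertex of $\mu$ of valency at least $4$, giving $a_4^*\ge\frac{1}{2}c_3$ (resp.\ $a_4>0$), whence $\kappa(U)\le -\frac{\pi}{4}c_3-\frac{\pi}{2}a_4^*-\frac{\pi}{2}(b_4'+b_4'')+\frac{\pi}{4m}<-\frac{\pi}{2}$ in all cases. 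That counting argument --- which is precisely the part you defer as ``the delicate heart of the proof'' --- is the actual content of the theorem, so the proposal as it stands has a substantive gap both in the stated reduction and in the unexecuted core.
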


\begin{proof}
Denote by $a_i$ the number of inner vertices of $\mu$ (see Figure \ref{JR1}) in $\mathcal{V}_i$, $i\ge 3$, and let $a_4^*=\sum_{i\ge 4}a_i$. For $i\ge 5$, denote by $b_i$ the number of inner vertices of $\mu$ in $\mathcal V_0$ with valency $i$. Define numbers $b_4',b_4'', c_4$ and $c_3$ as follows: 
\begin{itemize}
\item $b_4'$ is the number of inner vertices of $\mu$ in $\mathcal{V}_0$ which are of degree $4$ and such that
\begin{enumerate}
\item[a)]the corner associated with $D$ is a good corner;
\item[b)]exactly one of the two corners adjacent to the corner associated with $D$ is a bad corner;
\end{enumerate}

\item $b_4''$ is the number of inner vertices of $\mu$ in $\mathcal{V}_{0}$ which are of degree $4$ and such that
\begin{enumerate}
\item[a)]the corner associated with $D$ is a good corner;
\item[b)]both the two corners adjacent to the corner associated with $D$ are bad corners;
\end{enumerate}
\item $c_{4}$ is the number of inner vertices of $\mu$ in $\mathcal{V}_{0}$ which are of degree $4$ and such that
\begin{enumerate}
\item[a)]the corner associated with $D$ is a good corner;
\item[b)]neither of the two corners adjacent to the corner associated with $D$ are bad corners (so that the remaining corner which is 'opposite' to the corner associated with $D$, must be a bad corner);
\end{enumerate}

\item $c_{3}$ is the number of inner vertices of $\mu$ in $\mathcal{V}_{0}$ of degree $3$ such that the corner associated with $D$ is a good corner (in which case exactly one of the other two corners adjacent to the corner associated with $D$ is a bad corner).\end{itemize} Assume $D \notin \mathcal{D}_{0}$ and evaluate $\kappa(U)$. Thus,
$$
\begin{aligned}
\kappa(U) &=\sum_{i \geq 3} a_{i}\left(\frac{2 \pi}{i}-\pi\right)+\sum_{i \geq 5} b_{i}\left(\frac{2 \pi}{i}-\pi\right)+b_{4}^{\prime}\left(\frac{2 \pi}{4}+\varepsilon-\pi\right) \\
&+b_{4}^{\prime \prime}\left(\frac{2 \pi}{4}+\frac{\varepsilon}{2}-\pi\right)+c_{4}\left(\frac{2 \pi}{4}-\pi\right)+c_{3}\left(\frac{2 \pi}{3}+\frac{2 \pi}{24}-\pi+\frac{\varepsilon}{2}\right) \\
& \leq-\frac{\pi}{4} c_{3}-\frac{\pi}{2} a_{4}^{*}-\frac{\pi}{2}\left(b_{4}^{\prime}+b_{4}^{\prime \prime}\right)+\frac{\varepsilon}{2}\left(2 b_{4}^{\prime}+b_{4}^{\prime}+c_{3}\right) \\
& \leq-\frac{\pi}{4} c_{3}-\frac{\pi}{2} a_{4}^{*}-\frac{\pi}{2}\left(b_{4}^{\prime}+b_{4}^{\prime \prime}\right)+\frac{\pi}{8 m^{2}}(2 m).
\end{aligned}
$$

Assume $c_{3}>0 .$ Then, by Theorem \ref{JRP2}, $a_{4}^{*} \geq \frac{1}{2} c_{3}$, hence $-\frac{\pi}{2}\left(a_{4}^{*}+\frac{1}{2} c_{3}\right) \leq-\frac{\pi}{2}-\frac{\pi}{4},$ and
$$
\kappa(U) \leq-\left(\frac{\pi}{4}+\frac{\pi}{2}\right)+\frac{\pi}{8 m^{2}}(2 m)=\frac{3}{4} \pi+\frac{\pi}{4 m}<-\frac{3}{4} \pi+\frac{\pi}{4}=-\frac{\pi}{2}
$$
as $m>1 .$ Thus
\begin{align}
\text{if } D \notin D_{0} \text{ and } c_{3}>0\text{, then }\kappa(U)<-\frac{\pi}{2}.
\end{align}

Assume $c_{3}=0 .$ Then $b_{4}^{\prime}+b_{4}^{\prime \prime} \neq 0,$ hence, by Theorem \ref{JRP2}, $a_{4}>0$. Consequently,

\[k(U) \leq-\frac{\pi}{2} a_{4}-\frac{\pi}{2}\left(b^{\prime}+b^{\prime \prime}\right)+\frac{\pi}{4 m}  \leq-\frac{\pi}{2}-\frac{\pi}{2}+\frac{\pi}{4 m} 
<\frac{\pi}{2}-\frac{\pi}{2}+\frac{\pi}{4}<\frac{\pi}{2}.\]

\begin{align}
\text{If } D \notin \mathcal{D}_0 \text{ and } c_{3}=0\text{, then }\kappa(U)<-\frac{\pi}{2}. 
\end{align}
Assume now that $D \in \mathcal{D}_{0}$. Then, by the choice of $\gamma_1$, $\kappa(U)=-\frac{\pi}{2}-\varepsilon$.

\begin{align}
\text{If } D \in \mathcal{D}_{0}\text{, then } \kappa(U)<-\frac{\pi}{2}.
\end{align}

Evaluate now $\kappa(D)$. By (4.1), (4.2), (4.3), and (4.4) we have \[\kappa(D)=\kappa(U)+\kappa(V)+\frac{\pi}{2}+\frac{\pi}{2}<-\frac{\pi}{2}-\frac{\pi}{2}+\frac{\pi}{2}+\frac{\pi}{2}=0\] which yields the theorem.
\end{proof}

\begin{cor} $D$ has non-positive combinatorial curvature. $D$ has zero curvature if and only if $D$ has four neighbors and every boundary vertex is a separating vertex with valency $4$.
\end{cor}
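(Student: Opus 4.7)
The plan is to combine Theorem \ref{JRP3} with the structural information already established about $\partial D$. First, if $\partial D$ contains a vertex of $\mathcal{V}_0$, then Theorem \ref{JRP3} directly gives $\kappa(D) < 0$, so the non-positivity claim is immediate. It therefore remains to handle the case $\partial D \cap \mathcal{V}_0 = \emptyset$ and, in parallel, to extract the equality characterization.

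In this remaining case every boundary vertex of $D$ is either a separating vertex (of valency $\ge 4$ by Theorem \ref{JRP1}) or a non-separating vertex of valency $\ge 3$. Using the natural angle assignment $\theta_i = 2\pi/d(v_i)$, justified by the angle-redistribution remark given just before Theorem \ref{JRP1}, each separating vertex contributes at most $-\pi/2$ to $\kappa(D) = 2\pi + \sum_{i=1}^k (\theta_i - \pi)$, and each other boundary vertex contributes at most $-\pi/3$. Since $D$ has at least four neighbors (and hence $k \ge 4$ boundary vertices), plugging in the sharpest bounds yields $\kappa(D) \le 2\pi - 4 \cdot \pi/2 = 0$, which settles the first assertion.

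For the equality case I would simply trace back through the chain of inequalities. The identity $\kappa(D) = 0$ rules out any $\mathcal{V}_0$-vertex on $\partial D$ (otherwise Theorem \ref{JRP3} forces strict inequality), it forces exactly four boundary vertices of $D$ (so exactly four neighbors, matching the lower bound with equality), and it forces valency exactly $4$ at each of them so that each contribution $\theta_i - \pi = -\pi/2$ is attained sharply. These are precisely the conditions stated in the corollary.

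The main obstacle will be reconciling the $\varepsilon$-perturbed angle assignment used in Theorem \ref{JRP3} with the clean equality statement: the perturbation was introduced precisely to secure strict negativity at $\mathcal{V}_0$-vertices and would prevent $\kappa(D)$ from ever being exactly zero. This is resolved by reverting to the unperturbed assignment $\theta_i = 2\pi/d(v_i)$ in the extremal configuration, which is permissible under the earlier angle-redistribution remark; once this is in place the two halves of the corollary follow by direct bookkeeping on the excess formula.
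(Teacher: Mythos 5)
The paper gives no written proof of this corollary --- it is meant to fall out of Theorem \ref{JRP1}, Theorem \ref{JRP3} and the remark that neither $U$ nor $V$ can be a single piece --- so the burden is on you to supply the bookkeeping, and your bookkeeping has a genuine gap at the non-positivity step. In the case $\partial D\cap\mathcal{V}_0=\emptyset$ you only know that the two separating vertices $v_0,v_1$ of $D$ have valency $\ge 4$; by your own bounds every other boundary vertex contributes merely $\theta_i-\pi\le-\pi/3$. With exactly four boundary vertices this gives $\kappa(D)\le 2\pi-2\cdot\frac{\pi}{2}-2\cdot\frac{\pi}{3}=\frac{\pi}{3}>0$, so ``plugging in the sharpest bounds'' $-\pi/2$ at all four vertices is precisely the assertion that needs proof, not a consequence of what you have established. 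The missing structural fact is this: $\mu$ has at least one inner vertex (since $U$ is not a piece), and an inner vertex of $\mu$ that splits $U$ into two pieces and does \emph{not} lie in $\mathcal{V}_0$ must, by the definition of $\mathcal{V}_0$, be a separating vertex of a \emph{neighbouring} region, hence have valency $\ge 4$ by Theorem \ref{JRP1} applied to that region. Only this forces the contribution $-\pi/2$ at the two non-separating-for-$D$ vertices in the four-vertex case, and it is also exactly what puts the phrase ``every boundary vertex is a separating vertex'' into the equality characterization --- your trace-back recovers only ``valency $4$''. The case where $\mu$ or $\nu$ carries two or more inner vertices also needs separate attention: with five boundary vertices, two of valency $4$ and three of valency $3$, the naive count gives $2\pi-\pi-3\cdot\frac{\pi}{3}=0$, so strict negativity (needed for the ``only if'' direction) does not follow from counting alone there either.

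Your closing paragraph about ``reverting to the unperturbed assignment'' addresses a non-problem and does so in a way that would be illegitimate if it were needed. The paper's assignment is already $\frac{2\pi}{d(v)}$ at every vertex not in $\mathcal{V}_0$, and in the only situation where $\kappa(D)=0$ can occur there are no $\mathcal{V}_0$-vertices on $\partial D$, so the $\varepsilon$-perturbation never enters. One cannot in any case switch angle assignments region by region, since the angles around each vertex must sum to $2\pi$ globally; fortunately no switch is required.
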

We now prove Theorem \ref{theoA}.

\begin{proof}[Proof of Theorem \ref{theoA}] We remark that we already have the first statement in Theorem \ref{theoA} by the previous results. We now consider the relation ($\dagger$). Let the notation be as in Figures \ref{JR7} (A) and (B). Then either $v$ is a separating vertex for $D_1$ or $v$ is a separating vertex for $D_2$.

\begin{figure}[h]{}
     \centering
     \begin{subfigure}[]{0.49\textwidth}
         \centering
\begin{tikzpicture}[scale=0.85]

\draw[decoration={markings, mark=at position 0.1 with {\arrow{<}}},
        postaction={decorate},decoration={markings, mark=at position 0.4 with {\arrow{<}}},
        postaction={decorate}]  (-3,1) ellipse (2.5 and 1);
\node (v1) at (-3,2) {\textbullet};
\draw  plot[smooth, tension=.7] coordinates {(-5.5,1) (-7,2.5)};
\draw  plot[smooth, tension=.7] coordinates {(v1) (-4.5,3) (-7,2.5)};
\draw  plot[smooth, tension=.7] coordinates {(-0.5,1) (1,2.5)};
\draw  plot[smooth, tension=.7] coordinates {(1,2.5) (-1.5,3) (v1)};
\draw  plot[smooth, tension=.7] coordinates {(-4.5,3) (-3,3.8) (-1.5,3)};

\node at (-3,1) {$D$};
\node at (-4.5,1.5) {$A$};
\node at (-3,1.7) {$v$};
\node at (-1.5,1.5) {$B$};
\node at (-1,2.5) {$D_2$};
\node at (-5,2.5) {$D_1$};
\node at (-3,3) {$D_3$};
\node at (-6,3.2) {$V$};
\node at (0,3.2) {$U$};
\end{tikzpicture}
         \caption{}
         \label{JR7a}
     \end{subfigure}
     \hfill
     \begin{subfigure}[]{0.49\textwidth}
         \centering
\begin{tikzpicture}[scale=0.85]

\draw[decoration={markings, mark=at position 0.1 with {\arrow{<}}},
        postaction={decorate},decoration={markings, mark=at position 0.4 with {\arrow{<}}},
        postaction={decorate}]  (-3,1) ellipse (2.5 and 1);
\node (v1) at (-3,2) {\textbullet};
\draw  plot[smooth, tension=.7] coordinates {(-5.5,1) (-7,2.5)};
\draw[decoration={markings, mark=at position 0.7 with {\arrow{<}}},
        postaction={decorate}]   plot[smooth, tension=.7] coordinates {(v1) (-4.5,3) (-7,2.5)};
\draw plot[smooth, tension=.7] coordinates {(-0.5,1) (1,2.5)};
\draw[decoration={markings, mark=at position 0.6 with {\arrow{<}}},
        postaction={decorate}]  plot[smooth, tension=.7] coordinates {(1,2.5) (-1.5,3) (v1)};

\node at (-3,1) {$D$};
\node at (-4.5,1.5) {$A$};
\node at (-3,1.7) {$v$};
\node at (-1.5,1.5) {$B$};
\node at (-6,3.2) {$V$};
\node at (0,3.2) {$V$};
\end{tikzpicture}
         \caption{}
         \label{JR7b}
     \end{subfigure}
     \caption{}\label{JR7}
\end{figure}

If $v$ is a separating vertex for only one of $D_{1}$ and $D_{2},$ then it is a separating vertex for $D_3$ and hence for two neighboring regions of $D$ ($D_1$ and $D_3$ or $D_2$ and $D_3$). But the boundary labels of such neighbors cancel each other, contradicting our assumption that $M$ contains a minimal number of regions with the given boundary label. Consequently, $v$ is a separating vertex for both $D_{1}$ and $D_{2}$, see Figure \ref{JR7} (B).

Let $A$ be the label of the piece common to $D$ and $D_1,$ and let $B$ be the label of the piece common to $D$ and $D_2$. Then one of the following equations holds: $(AB)(AB)=U(AB)V$ (see Figure \ref{JR8} (A)) or $A=A^{-1}$ and $B=B^{-1}$, see Figure \ref{JR8} (B). In the first case $A=D^{\alpha}$ and $B=D^{\beta}$ and in the second case $A$ and $B$ have order $2$ as required.

\begin{figure}[h]{}
     \centering
     \begin{subfigure}[]{0.49\textwidth}
         \centering
\begin{tikzpicture}[scale=0.75]

\draw[decoration={markings, mark=at position 0.4 with {\arrow{<}}},
decoration={markings, mark=at position 0.16 with {\arrow{<}}},
        postaction={decorate}]  (-3.5,0.5) ellipse (2.5 and 1);
\draw[decoration={markings, mark=at position 0.15 with {\arrow{<}}},
decoration={markings, mark=at position 0.4 with {\arrow{<}}},
        postaction={decorate}]  (-8.5,0.5) ellipse (2.5 and 1);
\node at (-6,0.5) {\textbullet};
\node at (-3.5,1.5) {\textbullet};
\node at (-8.5,1.5) {\textbullet};
\draw[decoration={markings, mark=at position 0.15 with {\arrow{<}}},
decoration={markings, mark=at position 0.85 with {\arrow{<}}},
        postaction={decorate}]  plot[smooth, tension=.7] coordinates {(-6,1.5) (-8.5,2.5) (-6,3.5) (-3.5,2.5) (-6,1.5)};

\node at (-10,1) {$A$};

\node at (-7.5,1) {$B$};
\node at (-5,1) {$A$};
\node at (-2.5,1) {$B$};
\node at (-7.5,2.5) {$A$};

\node at (-4.5,2.5) {$B$};
\end{tikzpicture}
         \caption{}
         \label{JR8a}
     \end{subfigure}
     \hfill
     \begin{subfigure}[]{0.49\textwidth}
         \centering
\begin{tikzpicture}[scale=0.75]

\draw[decoration={markings, mark=at position 0.4 with {\arrow{<}}},
        postaction={decorate}]  (-3.5,0.5) ellipse (2.5 and 1);
\draw[decoration={markings, mark=at position 0.15 with {\arrow{<}}},
        postaction={decorate}]  (-8.5,0.5) ellipse (2.5 and 1);
\node at (-6,0.5) {\textbullet};
\node at (-3.5,1.5) {\textbullet};
\node at (-8.5,1.5) {\textbullet};
\draw[decoration={markings, mark=at position 0.15 with {\arrow{>}}},
decoration={markings, mark=at position 0.85 with {\arrow{>}}},
        postaction={decorate}]  plot[smooth, tension=.7] coordinates {(-6,1.5) (-8.5,2.5) (-6,3.5) (-3.5,2.5) (-6,1.5)};

\node at (-7.5,1) {$B$};
\node at (-5,1) {$A$};
\node at (-7.5,2.5) {$B$};

\node at (-4.5,2.5) {$A$};
\end{tikzpicture}
         \caption{}
         \label{JR8b}
     \end{subfigure}
     \caption{}\label{JR8}
\end{figure}

Assume ($\dagger$) holds. Then by Theorem \ref{JRP3} the combinatorial curvature at each inner region of a derived van Kampen diagram over $R$ is strictly negative. Therefore $G$ is hyperbolic.

Assume now that ($\dagger$) does not hold. Then $UV$ (or $VU$) has one of the following forms.
\begin{enumerate}
\item[a)]$A^{n} B^{m}$ for $n, m \geq 2$;
\item[b)]$ABCD, A=A^{-1}$, $B=B^{-1}$, $C=^{-1}$, $D=D^{-1}$;
\item[c)]$ABC^n$, $n\ge 2$, $A=A^{-1}$, $B=B^{-1}$.
\end{enumerate}

In case a) let $H=\langle A, B\rangle$. Then it follows from the fact that every van Kampen diagram over the symmetric closure of $UV$ has a non-positive combinatorial curvature that \[H \cong\langle a,b \mid a^n=b^{-m}\rangle.\]

Consequently, $G$ is not hyperbolic.

A similar argument shows that the subgroup of G generated by $A,B,C$ and $D$ in case b) is isomorphic to $\langle a, b, c, d\mid  a^{2}, b^{2}, c^{2}, d^{2}, abcd\rangle$ which contains a normal free Abelian subgroup generated by $ab$ and $bc$.

Finally in case c) let $H=\langle B,BC^n\rangle$. Then, as above, $H \cong\langle a, b \mid a^{2},(a b^{n})^{2}\rangle$ in which $K=\langle b^{n}\rangle$ is an infinite cyclic normal subgroup such that $H/K=\mathbb{Z}_{2}\ast \mathbb{Z}_{n}, n \geq 2$. Thus in all cases, $G$ is not hyperbolic.
\end{proof}

\begin{cor}Let $G$ be a group of F-type. $G$ is hyperbolic unless $U$ is a proper power or a product of two elements of order 2 and $V$ also is a proper power or a product of two elements of order 2.
\end{cor}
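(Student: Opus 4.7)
The plan is to derive this as a direct specialization of Theorem \ref{theoA}. A group of F-type decomposes as $G=G_1\Asterisk_A G_2$ where $G_1,G_2$ are free products of cyclics and $A=\langle U^{-1}\rangle=\langle V\rangle$ is infinite cyclic, and $G$ is precisely the quotient of $G^{(0)}=G_1\ast G_2$ by the normal closure of $UV$. Thus I would take $G^{(1)}=G_1$ and $G^{(2)}=G_2$ in the framework of Theorem \ref{theoA}, with $\mathcal{T}_1=\{1,\dots,p\}$, $\mathcal{T}_2=\{p+1,\dots,n\}$ and $G_\alpha^{(1)}=\langle a_\alpha\mid a_\alpha^{e_\alpha}=1\rangle$, and similarly for $G_\beta^{(2)}$.

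First I would verify the hypotheses of Theorem \ref{theoA}. The factors $G_1$ and $G_2$ are finitely generated free products of cyclic groups, hence virtually free, and therefore hyperbolic in the sense of Gromov; this handles the first half of condition 2. Next I would check the technical assumption on $U$ and $V$: the words $U$ and $V$ are assumed cyclically reduced of infinite order in the free products on $\{a_1,\dots,a_p\}$ and $\{a_{p+1},\dots,a_n\}$ respectively. The only case requiring attention is when $U$ lies in a single factor $G_\alpha^{(1)}=\langle a_\alpha\mid a_\alpha^{e_\alpha}=1\rangle$; since $U$ has infinite order, this forces $e_\alpha=0$, so $G_\alpha^{(1)}\cong\mathbb{Z}$ is free and $U$ is automatically cyclically reduced in it. The same argument applies to $V$. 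Thus all hypotheses of Theorem \ref{theoA} are met.

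Having verified the setup, I would invoke Theorem \ref{theoA}(2): $G$ is hyperbolic if and only if $G_1$ and $G_2$ are hyperbolic and property ($\dagger$) holds. Since hyperbolicity of $G_1$ and $G_2$ is automatic, the criterion reduces to ($\dagger$) alone: $G$ is hyperbolic if and only if at least one of $U$, $V$ is neither a proper power nor a product of two elements of order 2. Taking the contrapositive yields exactly the statement of the corollary: $G$ fails to be hyperbolic precisely when both $U$ and $V$ are proper powers or products of two elements of order 2.

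There is no serious obstacle here, since the hard work has already been done in Theorem \ref{theoA}; the only step that requires any care is the verification of the technical assumption on $U$ and $V$ lying in a single factor, which is resolved by the infinite-order hypothesis built into the definition of groups of F-type.
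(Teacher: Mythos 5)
Your proposal is correct and follows the same route as the paper, which deduces the corollary directly from Theorem \ref{theoA} by noting that groups of F-type are special cases of the groups treated there and that the cyclic factors (hence the free products $G_1$, $G_2$) are hyperbolic, so the criterion reduces to ($\dagger$). Your extra check that a $U$ lying in a single cyclic factor forces that factor to be infinite cyclic (hence free) is a worthwhile detail the paper leaves implicit.
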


We just want to remark here that by the result of F.~Dahmani and V.~Guirardel, see \cite{58}, the isomorphism problem for a hyperbolic group of F-type is solvable in the class of hyperbolic groups.

It remains the question if the isomorphism problem is solvable for a group of F-type in some class of groups. This is an open problem. There exists just a result in the special case of cyclically pinched one-relator groups, that is, in the case of groups of F-type with $e_1=e_2=\dots=e_n=0$. The isomorphism problem for a cyclically pinched one-relator group $G$ is solvable in the class of one-relator groups, see \cite{193} and \cite{0}.

Certainly, $G$ has no faithful representation in $\PSL(2,\mathbb{R})$ if $U=XY$ with $X^2=Y^2=1$ and $V=WZ$ with $W^2=Z^2=1$ or $U=U_1^k$, $V=V_1^q$ with $k,q\ge 2$.

Concerning the question if there exists a faithful representation $\rho\colon G\to\PSL(2,\mathbb{R})$ we are left, up to symmetry, with the case $U=XY$ with $X^2=Y^2=1$ and $V=V_1^q$ with $q\ge 2$. But in this case there does not exist such a $\rho$ because
\[(\rho(X))^2=(\rho(UY))^2=(\rho(VY))^2=1\] implies $\rho(V_1Y)^2=1$, but $(V_1Y)^2\neq 1$ in $G$ for any essential representation $\rho\colon G\to\PSL(2,\mathbb{R})$.

Our next aim is to characterize the hyperbolicity of a group of F-type by means of faithful representations in $\PSL(2,\mathbb{R})$. In \cite{73} we showed the following.

\begin{thm}Let $G$ be a hyperbolic group of F-type. Then $G$ has a faithful representation in $\PSL(2,\mathbb{R})$.
\end{thm}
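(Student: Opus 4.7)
The plan is to upgrade Theorem \ref{theo3} by replacing $\PSL(2,\mathbb{C})$ with $\PSL(2,\mathbb{R})$ and by promoting the representation to one that is faithful on all of $G=G_1\Asterisk_A G_2$. First, invoke the preceding corollary: the hyperbolicity of $G$ forces at least one of $U$, $V$ to be neither a proper power nor a product of two elements of order $2$. After interchanging the two factors if necessary, we may assume $V$ has this property. Recall that $A=\langle U^{-1}\rangle=\langle V\rangle$ is infinite cyclic and each $G_i$ is a nontrivial free product of cyclic groups.

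Since each $G_i$ is a nontrivial free product of cyclics, each admits a faithful discrete representation into $\PSL(2,\mathbb{R})$ as a Fuchsian group; call these $\rho_i\colon G_i\to\PSL(2,\mathbb{R})$. Because $U$ and $V$ have infinite order, $\rho_1(U^{-1})$ and $\rho_2(V)$ can be taken to be hyperbolic isometries. Repeating the character-variety argument from the proof of Theorem \ref{theo3}, but now over $\mathbb{R}$ (with a direct calculation in the case when a factor is cyclic, which in the F-type setup forces $e_1=0$ and $G_1\cong\mathbb{Z}$), we rescale the $\rho_i$ so that both $\rho_1(U^{-1})$ and $\rho_2(V)$ are conjugate in $\PSL(2,\mathbb{R})$ to the same diagonal matrix $\mathrm{diag}(t,t^{-1})$ with $t$ real and transcendental, without destroying faithfulness of either $\rho_i$. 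After a common conjugation in $\PSL(2,\mathbb{R})$ we arrange $\rho_1(U^{-1})=\rho_2(V)$, and the universal property of the amalgamated free product yields $\rho\colon G\to\PSL(2,\mathbb{R})$ restricting to $\rho_i$ on each $G_i$.

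It remains to establish injectivity of $\rho$. Let $L\subset\mathbb{H}^2$ be the common axis of $\rho_1(U^{-1})=\rho_2(V)$. The hypothesis on $V$ yields two crucial geometric facts: since $V$ is not a proper power in $G_2$, the stabiliser of $L$ in $\rho_2(G_2)$ equals $\rho_2(A)$; and since $V$ is not the product of two involutions, no order-$2$ element of $\rho_2(G_2)$ interchanges the two endpoints of $L$. These conditions permit a Dirichlet-style construction that places a fundamental domain $F_2$ for $\rho_2(G_2)$ inside one closed half-plane bounded by $L$, while a fundamental domain $F_1$ for $\rho_1(G_1)$ is placed in the opposite half-plane. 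Faithfulness of $\rho$ then follows from a Klein--Maskit-type combination theorem: one checks by a ping-pong argument across $L$, driven by the genericity of $V$, that every reduced normal-form word $g_1h_1\cdots g_kh_k$ with $g_j\in G_1\setminus A$ and $h_j\in G_2\setminus A$ maps to a non-trivial element of $\PSL(2,\mathbb{R})$. The main obstacle is the asymmetric case in which $U=U_1^k$ is a proper power on the $G_1$ side, so $\rho_1(U_1)$ also stabilises $L$ and the $\rho_1(G_1)$ line-stabiliser $\rho_1(\langle U_1\rangle)$ strictly contains $\rho_1(A)$; one must show that this extra stabiliser refines the tessellation along $L$ without producing any unwanted identification with the $G_2$ side, and this is exactly what the genericity of $V$ (supplied by the hyperbolicity of $G$ via the corollary) guarantees.
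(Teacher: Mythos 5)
The paper does not actually prove this theorem: it is quoted from \cite{73}, so there is no in-paper argument to compare you against, and your proposal has to be judged on its own terms. It has two genuine gaps, both located exactly where the real work of \cite{73} lies. The first concerns your ``Dirichlet-style construction'' on the $G_2$ side. Placing a fundamental domain for the Fuchsian group $\rho_2(G_2)$ inside one closed half-plane bounded by the axis $L$ of $\rho_2(V)$ amounts to requiring that this half-plane be precisely invariant under $\langle\rho_2(V)\rangle$ in $\rho_2(G_2)$, i.e.\ that $g(L)\cap L=\emptyset$ for every $g\in G_2\setminus\langle V\rangle$. That is the condition that $V$ represent a \emph{simple}, indeed boundary-like, closed geodesic on the quotient orbifold, and it is far stronger than ``$V$ is neither a proper power nor a product of two involutions.'' A typical cyclically reduced $V$ of infinite order (for instance $V=a_3a_4a_3a_4^{-1}a_3$ in a free factor $\langle a_3,a_4\rangle$) has self-intersecting axis projection in every discrete faithful realization of $G_2$, so no half-plane bounded by $L$ is precisely invariant and the Klein--Maskit combination theorem you invoke does not apply. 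The hypotheses you correctly extract from hyperbolicity (the endpoint-preserving stabilizer of $L$ in $\rho_2(G_2)$ is exactly $\rho_2(\langle V\rangle)$, and no involution swaps the endpoints of $L$) give you irreducibility of the pairs $\{\rho_2(h),\rho_2(V)\}$ for $h\in G_2\setminus\langle V\rangle$; they do not give disjointness of the translates of $L$, which is what your geometric picture needs.

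The second gap is the case you yourself flag as ``the main obstacle,'' namely $U=U_1^k$ a proper power (or a product of two involutions) on the $G_1$ side: it is not resolved but merely asserted. When $U_1\in G_1\setminus A$ stabilizes $L$, ping-pong across $L$ fails outright --- the defining inclusion $g(X_2)\subset X_1$ for $g\in G_1\setminus A$ is violated by $g=U_1$, and a reduced word such as $U_1dU_1^{-1}d^{-1}$ is exactly the kind of element a table-tennis argument across $L$ cannot see. Saying that ``the genericity of $V$ guarantees'' the tessellation refines correctly is a restatement of what must be proved, not a proof. (A smaller, repairable point: matching the real translation lengths of $\rho_1(U^{-1})$ and $\rho_2(V)$ while keeping both representations discrete and faithful needs a word about the image of the length function on the real character variety.) A workable argument has to get around both obstructions --- for example by trace-polynomial methods in a transcendental parameter in the spirit of Theorem \ref{theo3} and of Theorem \ref{frei}, with a separate device for syllables that commute with the amalgamated generator --- and none of that machinery is present in your sketch.
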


Hence, we get the following.

\begin{cor}Let $G$ be a group of F-type. Then $G$ is hyperbolic if and only if $G$ has a faithful representation in $\PSL(2,\mathbb{R})$.
\end{cor}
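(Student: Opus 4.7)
The forward direction is exactly the theorem immediately preceding this corollary, so nothing further is required there. The whole content of the statement is the converse: I want to show that if $G$ is a group of F-type that is \emph{not} hyperbolic, then $G$ admits no faithful representation into $\PSL(2,\mathbb{R})$. My plan is to argue by contraposition, using the hyperbolicity classification that has already been established.

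First, suppose $G$ is not hyperbolic. Then the preceding corollary (the one classifying hyperbolic groups of F-type) forces both $U$ and $V$ to fall into the non-generic list: each of $U$ and $V$ is either a proper power or a product of two elements of order $2$. Up to swapping $U$ and $V$ this leaves three cases, which are precisely the cases already discussed in the two paragraphs immediately before the theorem on hyperbolic F-type groups having a faithful $\PSL(2,\mathbb{R})$-representation. My task is simply to assemble those three observations into a single statement.

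Case one: $U = U_1^k$ and $V = V_1^q$ with $k,q\ge 2$. Here the relator $UV=1$ forces $\rho(U_1)$ and $\rho(V_1)$ to share a common pair of fixed points on $\partial\mathbb{H}^2$, hence to commute in $\PSL(2,\mathbb{R})$; but $U_1$ and $V_1$ do not commute in $G$ by the normal form theorem in $G=G_1\Asterisk_A G_2$, so $\rho$ cannot be faithful. Case two: $U=XY$ and $V=WZ$ with $X^2=Y^2=W^2=Z^2=1$; the analogous argument applied to the involutions (each a non-trivial element of finite order in $G$) again produces extra relations in $\PSL(2,\mathbb{R})$ not satisfied in $G$. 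Case three, the mixed case $U=XY$ with $X^2=Y^2=1$ and $V=V_1^q$ with $q\ge 2$: this is the one the paper singles out explicitly just before the theorem. There, from $UV=1$ one computes $(\rho(X))^2=(\rho(UY))^2=(\rho(VY))^2=1$, and since $V=V_1^q$ this forces $\rho(V_1Y)^2=1$; on the other hand $(V_1Y)^2\neq 1$ in $G$, again by the normal form theorem in $G_1\Asterisk_A G_2$.

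Collecting the three cases gives: if $G$ is non-hyperbolic of F-type, there is no faithful $\rho\colon G\to\PSL(2,\mathbb{R})$. Together with the preceding theorem, this yields the biconditional. The only nontrivial technical step is verifying in the mixed case that $(V_1Y)^2\neq 1$ in $G$; I expect this to be the main obstacle, but it follows from cyclic reducedness of $V=V_1^q$ together with the fact that $Y$ and $V_1$ lie in different factors of $G_1\Asterisk_A G_2$, so their alternating product cannot collapse inside $G$.
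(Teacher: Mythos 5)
Your overall architecture is exactly the paper's: the forward direction is the quoted theorem from the faithful-real-representations paper, and the converse is a contraposition through the three non-hyperbolic configurations discussed in the two paragraphs preceding that theorem. Cases one and three are fine and at the same level of rigour as the source: in case one, $\rho(U_1)$ and $\rho(V_1)$ both lie in the centralizer of the infinite-order element $\rho(U)=\rho(V)^{-1}$, which is abelian in $\PSL(2,\mathbb{R})$, while $U_1\notin\langle U\rangle$ and $V_1\notin\langle V\rangle$ sit in different factors of $G_1\Asterisk_A G_2$ and hence do not commute; and your normal-form justification that $(V_1Y)^2\neq 1$ in the mixed case is precisely the supplement the paper's computation needs.

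The genuine gap is case two ($U=XY$, $V=WZ$ with $X,Y,W,Z$ involutions), and it is not a routine verification you have merely deferred: the ``analogous argument'' does not produce a relation that fails in $G$. All the relations force is that $\rho(X),\rho(Y),\rho(W),\rho(Z)$ are half-turns about points of one geodesic $\ell$ (the axis of $\rho(U)$), so $\langle \rho(X),\rho(Y),\rho(W),\rho(Z)\rangle$ lies in the setwise stabilizer of $\ell$, which is isomorphic to $\mathbb{R}\rtimes_{-1}\mathbb{Z}/2$. But the subgroup $\langle X,Y,W,Z\rangle$ of $G$ is isomorphic to $T=\langle a,b,c,d\mid a^2=b^2=c^2=d^2=abcd=1\rangle\cong \mathbb{Z}^2\rtimes_{-1}\mathbb{Z}/2$, and $T$ \emph{does} embed, non-discretely, into that stabilizer: send $a,b,c,d$ to the half-turns $\left(\begin{smallmatrix}0&-\mu\\ \mu^{-1}&0\end{smallmatrix}\right)$ about the points $i,\, i\beta,\, i\beta\delta,\, i\delta$ of the imaginary axis with $\log\beta/\log\delta$ irrational; then $abcd\mapsto I$, the index-two translation subgroup maps injectively onto $\langle \mathrm{diag}(\beta,\beta^{-1}),\mathrm{diag}(\delta,\delta^{-1})\rangle\cong\mathbb{Z}^2$, and every element outside it maps to an anti-diagonal involution, so the representation is faithful on $T$. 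In particular, for the group of F-type $G=H_3=T$ itself (which is virtually $\mathbb{Z}^2$, hence not hyperbolic) this gives a faithful representation into $\PSL(2,\mathbb{R})$, so the converse cannot be closed in case two by exhibiting a violated relation. To be fair, the paper supplies nothing here beyond the word ``Certainly,'' so you have reproduced its argument faithfully; but as written this case fails, and it can only be repaired by strengthening ``faithful'' to ``discrete and faithful,'' by excluding the doubly degenerate configuration, or by finding a genuinely different obstruction for the groups in this case that properly contain $\langle X,Y,W,Z\rangle$.
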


This has now some additional algebraic consequences. We know that $\PSL(2,\mathbb{R})$ is \textit{commutative transitive}. Hence, any hyperbolic group of F-type is commutative transitive. Recall that a group $H$ is commutative transitive if $[x,y]=1=[y,z]$ for $x,y,z\in H$, $y\neq 1$, then $[x,z]=1$.

Closely tied to commutative transitivity is the concept of being CSA. A group $H$ is CSA or \textit{conjugately separated Abelian} if maximal Abelian subgroups are malnormal. These concepts have played a prominent role in the studies of fully residually free groups, limits groups and discriminating groups see \cite{67}. They also play a role in the solution of the Tarski problems. CSA always implies commutative transitivity.

In general the class of CSA groups is a proven subclass of the class of commutative transitive groups, however they are equivalent in the presence of residual freeness. For hyperbolic groups of F-type we have the following.
\begin{thm}Let $G$ be a hyperbolic group of F-type. Assume that $G$ is torsion-free or has only odd torsion, that is, $e_i$ is odd if $e_i\ge 2$. Then $G$ is CSA.
\end{thm}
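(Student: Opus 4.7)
The plan is to combine the faithful representation $\rho\colon G\to\PSL(2,\mathbb{R})$ supplied by the preceding corollary with the commutative transitivity of $\PSL(2,\mathbb{R})$; the torsion hypothesis will enter only at a single, well-isolated point of the argument.

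First I would observe that, since $\rho$ is injective and $\PSL(2,\mathbb{R})$ is commutative transitive, $G$ is commutative transitive. From this I would extract two standard facts: for any nontrivial $h\in G$ the centralizer $C_G(h)$ is the unique maximal abelian subgroup of $G$ containing $h$, and every maximal abelian subgroup $M$ satisfies $C_G(M)=M$. This turns the CSA claim into the following: whenever $g\in G$ and $gMg^{-1}\cap M\ne\{1\}$, I must show $g\in M$. Picking a nontrivial $h$ in the intersection makes $M$ and $gMg^{-1}$ two maximal abelian subgroups containing $h$, so by uniqueness they agree and $g\in N_G(M)$. The remaining task is therefore to upgrade $g\in N_G(M)$ to $g\in C_G(M)=M$.

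The second step is a geometric case analysis based on the action of $\rho(M)$ on $\mathbb{H}^2$. Being abelian in $\PSL(2,\mathbb{R})$, $\rho(M)$ has one of three fixed-point configurations on the closed disk: a single interior point $z_0$, the pair of endpoints of a common axis, or a single boundary point $\xi$. Since $\rho(g)$ normalizes $\rho(M)$, it permutes this fixed-point set. In the single-interior-point case, $\rho(g)$ lies in the abelian rotation group at $z_0$; in the single-boundary-point case, a short computation in the upper half-plane (normalizing $\xi=\infty$, so that a generator of $\rho(M)$ acts as a translation) shows that $\rho(g)$ must itself be a parabolic translation. In both cases $\rho(g)$ commutes with $\rho(M)$, whence $g\in C_G(M)=M$.

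I expect the main obstacle to be the remaining hyperbolic case, since this is where the torsion hypothesis is actually needed. Here $\rho(g)$ either fixes both endpoints of the axis — in which case it lies in the one-parameter hyperbolic subgroup along that axis and commutes with $\rho(M)$, giving $g\in M$ — or it swaps them, in which case $\rho(g)$ must be an elliptic involution of $\PSL(2,\mathbb{R})$. Faithfulness of $\rho$ then forces $g$ to have order $2$ in $G$, which is ruled out precisely by the hypothesis that $G$ is torsion-free or has only odd torsion. Assembling the three cases yields $g\in M$ for every $g\in N_G(M)$, so every maximal abelian subgroup of $G$ is malnormal and $G$ is CSA.
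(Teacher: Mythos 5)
Your proposal is correct and follows essentially the same route as the paper: embed $G$ faithfully in $\PSL(2,\mathbb{R})$, note that a maximal abelian subgroup has a common fixed-point set which any conjugating/normalizing element must permute, and observe that the only obstruction to commuting is an element swapping the two endpoints of a hyperbolic axis, which is necessarily an involution and is excluded by the torsion hypothesis. Your version merely spells out the elliptic/parabolic/hyperbolic trichotomy and the reduction to $N_G(M)=M$ that the paper's terser argument leaves implicit.
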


\begin{proof}We may assume that $G$ is already a subgroup of $\PSL(2,\mathbb{R})$. Let $A$ be a maximal Abelian subgroup of $G$. Since $G$ is a subgroup of $\PSL(2,\mathbb{R})$ any two elements $a,b\in A$ have, considered as linear fractional transformations, the same fixed points. 

Assume that $G$ is not CSA. Then there are non-trivial elements $a,b\in A$ and $x\in G\setminus A$ with $xax^{-1}=b$. This means that $x$ permutes non-trivially the fixed points of $a$, which are also the fixed points $b$. Hence, $x$ must have order two. This gives a contradiction, and therefore $G$ is CSA.\end{proof}
There is a certain relation between CSA groups and RG groups. A group $H$ is a \textit{restricted Gromov group} or an \textit{RG} group, if $H$ satisfies the following property: If $g$ and $h$ are elements in $H$ then either the subgroup $\langle g,h\rangle$ is cyclic or there exists a positive integer $t$ with $g^t\neq h^t$ and $\langle g^t,h^t\rangle=\langle g^t\rangle\ast\langle h^t\rangle$.

An RG group $H$ has the property that every Abelian subgroup is locally cyclic. Hence, it is often convenient to assume that every element of $H$ is contained in a maximal cyclic subgroup of $H$. This last property is of course satisfied in hyperbolic groups. Certainly, an RG group is commutative transitive. Using  \cite[Theorem 3.9.18]{0} we easily get Theorem \ref{thm4.18a}.

\begin{thm}\label{thm4.18a}Let $G$ be a hyperbolic group of F-type. Assume that $G$ is torsion-free or has only odd torsion. Then $G$ is an RG group.
\end{thm}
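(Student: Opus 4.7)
The plan is to stack three previously obtained facts: the faithful embedding $G\hookrightarrow\PSL(2,\mathbb{R})$ from the preceding corollary, the CSA property from the immediately preceding theorem, and Theorem 3.9.18 of \cite{0}. First I would realise $G$ as a subgroup of $\PSL(2,\mathbb{R})$, observe (using the preceding theorem, and hence the torsion-free or odd-torsion assumption) that $G$ is CSA, and therefore commutative transitive with all maximal Abelian subgroups malnormal.

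Second I would argue that every non-trivial element of $G$ sits in a unique maximal cyclic subgroup. As noted in the remarks preceding the theorem, this feature is automatic for hyperbolic groups. What has to be checked in the present setting is only that ``virtually cyclic'' can be upgraded to ``cyclic'': finite subgroups of $\PSL(2,\mathbb{R})$ are cyclic, and the odd-torsion hypothesis prevents any extension of an infinite cyclic subgroup by a $\mathbb{Z}_2$, since such extensions would produce dihedral-like configurations violating malnormality of maximal cyclic subgroups. Consequently the centralizer of each non-trivial $g\in G$ is literally cyclic, and the unique maximal cyclic overgroup $M(g)$ is malnormal.

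Third I would invoke Theorem 3.9.18 of \cite{0} directly. For $g,h\in G$, either the maximal cyclic subgroups $M(g),M(h)$ coincide, in which case $\langle g,h\rangle$ is already cyclic, or they are distinct, in which case the combination of malnormality and hyperbolicity supplies, via the ping-pong mechanism packaged into Theorem 3.9.18, a positive integer $t$ with $g^t\ne h^t$ and $\langle g^t,h^t\rangle=\langle g^t\rangle\ast\langle h^t\rangle$, which is exactly the RG condition. The main obstacle is the second step: confirming that the odd-torsion hypothesis is precisely what preserves malnormality of maximal cyclic subgroups in the presence of torsion, which is essential because, as illustrated by the dihedral and $\mathbb{Z}_2\times\mathbb{Z}_2$ phenomena appearing earlier in the paper, even-torsion hyperbolic groups of F-type typically fail both CSA and RG. Once that check is in place, the conclusion follows by a direct citation.
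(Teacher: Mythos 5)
Your proposal follows the same route as the paper, whose entire argument is the single line ``Using Theorem 3.9.18 from \cite{0} we easily get the following,'' resting implicitly on the faithful representation in $\PSL(2,\mathbb{R})$ and the preceding CSA theorem exactly as you do. Your elaboration of why maximal cyclic subgroups are malnormal under the odd-torsion hypothesis is a reasonable filling-in of what the paper leaves to the cited theorem, and nothing in it conflicts with the paper's reasoning.
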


We present the following result and sketch its proof.

\begin{thm}Let $G$ be a group of F-type which is not a free product of cyclic groups. Assume that each subgroup of $G$ of infinite index is a free product of cyclic groups. Then $G$ is a co-compact planar discontinuous group.
\end{thm}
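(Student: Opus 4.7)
The plan is to descend to a torsion-free subgroup $H$ of finite index in $G$, identify $H$ as a closed surface group using a classical characterization, and then promote the resulting hyperbolic structure to $G$.

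Since $G$ is virtually torsion-free by the earlier Corollary, I would first fix a torsion-free normal subgroup $H\trianglelefteq G$ of finite index. The subgroup hypothesis transfers cleanly to $H$: every subgroup $K\le H$ of infinite index in $H$ also has infinite index in $G$, so by hypothesis $K$ is a free product of cyclics, and since $H$ is torsion-free, $K$ is in fact a free group. Next, one must rule out that $H$ itself is free. If it were, $G$ would be virtually free; combining the F-type decomposition $G=G_1\Asterisk_A G_2$ (with $A$ infinite cyclic and $G_1,G_2$ free products of cyclics) with the fact from the earlier Corollary that every finite subgroup of $G$ is cyclic and conjugate to a subgroup of some $\langle a_i\rangle$, a Bass--Serre argument in the spirit of Karrass--Pietrowski--Solitar collapses the amalgamation and forces $G$ itself to be a free product of cyclic groups, contradicting the standing hypothesis on $G$.

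Thus $H$ is a finitely generated, torsion-free, non-free group in which every finitely generated subgroup of infinite index is free. By a classical characterization (going back to Jaco, with refinements by Shalen and P.~Scott), such a group must be the fundamental group of a closed orientable surface $\Sigma_g$ of genus $g\ge 2$. Finally, $G$ is a virtually torsion-free finite extension of $\pi_1(\Sigma_g)$ in which every finite subgroup is cyclic; applying the Dehn--Nielsen--Baer theorem to realise the outer $G/H$-action on $\pi_1(\Sigma_g)$ by isometries of $\Sigma_g$, one extends the standard $H$-action on $\mathbb{H}^2$ to a proper, cocompact action of $G$ on $\mathbb{H}^2$, exhibiting $G$ as a cocompact planar discontinuous group.

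The main obstacle is the classical identification of $H$ as a surface group: the heart of the argument is showing that the only finitely generated, torsion-free, non-free groups in which every finitely generated subgroup of infinite index is free are closed surface groups of genus $\ge 2$. Two subsidiary points also deserve care: the virtually-free step, which must exploit the specific F-type structure rather than a generic Karrass--Pietrowski--Solitar appeal (the amalgamated subgroup $A$ being infinite cyclic is essential); and the Dehn--Nielsen--Baer lifting of the $G/H$-action, where one needs the extension to be realised geometrically and not merely abstractly, for which the finite-cyclicity of the torsion of $G$ is the key input.
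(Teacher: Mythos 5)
Your overall skeleton (pass to a torsion-free finite-index subgroup, identify it as a closed surface group, then realize the finite quotient action geometrically) shares its last step with the paper, which also invokes the solution of Nielsen's realization problem \cite{233}. But the central step of your argument rests on a theorem that does not exist. The claim that every finitely generated, torsion-free, non-free group in which all finitely generated subgroups of infinite index are free must be a closed surface group is precisely (a form of) the \emph{Surface Group Conjecture}, which is open in general. What Jaco, Scott and Shalen prove is essentially the \emph{forward} direction and adjacent 3-manifold facts: infinite-index subgroups of surface groups are free, finitely generated 3-manifold groups are finitely presented, and so on. There is no classical converse characterization of the kind you invoke, and you have placed the entire weight of the proof on it. The paper's (admittedly very terse) proof avoids this by exploiting the specific structure of $G$: it regards $G$ as a quotient of a cyclically pinched one-relator group and extends the arguments of Ciobanu--Fine--Rosenberger \cite{Cio}, where the implication ``property IF $\Rightarrow$ surface group or free product of cyclics'' is established \emph{for that restricted class} using the amalgam decomposition $G=G_1\Asterisk_A G_2$ with $A$ infinite cyclic. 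Note also that even if one grants the characterization for cyclically pinched one-relator groups, your torsion-free subgroup $H$ is not obviously in that class (by the Karrass--Solitar subgroup theorem it is a priori only the fundamental group of a graph of groups with cyclic edge groups and free vertex groups), so your reduction to $H$ does not land where any known theorem applies; the paper sidesteps this by arguing directly with $G$ before passing to the geometric realization.

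Two secondary points. First, your exclusion of the virtually free case is under-justified: ``virtually free with all finite subgroups cyclic'' does not by itself force a free product of cyclics (consider $\mathbb{Z}_4\Asterisk_{\mathbb{Z}_2}\mathbb{Z}_4$), so the Karrass--Pietrowski--Solitar appeal must genuinely use the F-type relator, not just the cyclicity of torsion. Second, in the final lifting step one should check that $G/H\to\mathrm{Out}(H)$ is injective (using that $H$, being a genus $\ge 2$ surface group, has trivial center and that $C_G(H)$ is trivial) before Nielsen realization can be applied to produce a properly discontinuous cocompact planar action of all of $G$.
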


\begin{proof}[Sketch of proof]We first consider the case that $G$ is not hyperbolic. We have to consider the situations for $U$ and $V$ which yield that $G$ is not hyperbolic. For these situations, we easily find subgroups of infinite index which are not free products of cyclic groups unless $G\cong\langle U,V\mid U^2=V^2\rangle$ or $G\cong \langle a,b,V\mid a^2=b^2=abV^2=1\rangle$ or $G\cong \langle a,b,c,d\mid a^2=b^2=c^2=d^2=abcd=1\rangle$. But these three groups are co-compact Euclidean planar discontinuous groups.

Now let $G$ be hyperbolic. We just consider $G$ as a quotient of the cyclically pinched one-relator group where the $a_i$ all have infinite order. Next we extend the arguments in \cite{Cio}. Then we use the solution of Nielsen's realization problem by S.\,P\,.~Kerckhoff in \cite{E8} (see also \cite{233} for many details and applications.)\end{proof}

We end this section with the following conjecture.

\begin{conj}Let $H$ be a finitely generated, non-elementary subgroup of $\PSL(2,\mathbb{R})$. Then $H$ is finitely presented.
\end{conj}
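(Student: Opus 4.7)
The plan is to dichotomize on whether the finitely generated non-elementary subgroup $H\le \PSL(2,\mathbb{R})$ is discrete or not, and treat each case by different techniques.

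First I would dispose of the discrete case. If $H$ is discrete, then it is a finitely generated Fuchsian group. By a theorem of Greenberg (or by a direct argument using that a finitely generated non-elementary Fuchsian group admits a Nielsen-reduced system of generators), the action of $H$ on the hyperbolic plane $\mathbb{H}^2$ admits a fundamental polygon with finitely many sides. Poincaré's polygon theorem then yields a finite presentation of $H$ in terms of side-pairing generators and vertex cycle relations. This case is essentially classical and needs no new input.

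For the non-discrete case, let $\bar H$ denote the closure of $H$ in $\PSL(2,\mathbb{R})$. Then $\bar H$ is a closed, hence Lie, subgroup, and its identity component $\bar H^0$ is a positive-dimensional connected Lie subgroup of $\PSL(2,\mathbb{R})$. I would invoke the classification of such subgroups: up to conjugation they are the one-parameter elliptic, parabolic, or hyperbolic subgroups, the two-dimensional upper-triangular subgroup stabilizing a boundary point, or all of $\PSL(2,\mathbb{R})$. Each proper nontrivial connected subgroup has an invariant set of at most two points on $\overline{\mathbb{H}^2}$, and its normalizer in $\PSL(2,\mathbb{R})$ permutes that invariant set and is therefore elementary. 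Since $H$ is non-elementary, so is $\bar H$, which forces $\bar H^0 = \PSL(2,\mathbb{R})$; hence $H$ is dense in $\PSL(2,\mathbb{R})$.

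This reduces the conjecture to the following sharper claim: every finitely generated dense subgroup of $\PSL(2,\mathbb{R})$ is finitely presented. This is the main obstacle, since density destroys the discreteness needed for any direct fundamental-domain argument and, as finitely generated linear groups in higher dimensions show, linearity alone does not imply finite presentation. My proposed attack is via the representation variety: consider the character variety of $H$ and deform the inclusion $H\hookrightarrow \PSL(2,\mathbb{R})$ to nearby discrete faithful representations (Fuchsian groups), for which finite presentation is already known by the first case. If one can show that every relation holding in $H$ already holds in a sufficiently nearby Fuchsian deformation, and conversely that finitely many such Fuchsian relations determine $H$, a compactness/algebraic-geometry argument on the representation variety would transport finite presentability from the Fuchsian deformations back to $H$. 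The hard part is establishing this transfer; carrying it out will likely require a careful analysis of the Zariski closure of $H$ together with a Mal'cev-type residual finiteness argument to rule out an infinitely generated kernel of relations.
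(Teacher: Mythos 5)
The paper gives no proof of this statement: it is posed as an open conjecture at the end of Section 4, so there is nothing in the paper to compare your argument against, and any complete proof would be a new result rather than a reconstruction.

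Your preliminary reductions are sound and standard. If $H$ is discrete it is a finitely generated Fuchsian group, hence geometrically finite, hence finitely presented; and if $H$ is non-discrete, then $H$ normalizes the identity component $\bar H^0$ of its closure, every proper non-trivial connected subgroup of $\PSL(2,\mathbb{R})$ has elementary normalizer, and non-elementarity of $H$ forces $\bar H^0=\PSL(2,\mathbb{R})$, i.e.\ $H$ is dense. But at that point you have merely restated the conjecture in its essential case, and the strategy you propose for it does not work as described. First, a relation is a closed condition on the representation variety, not an open one: if a word $w$ in the generators satisfies $\rho_0(w)=1$ at the inclusion $\rho_0\colon H\hookrightarrow\PSL(2,\mathbb{R})$, then at a nearby representation $\rho$ the element $\rho(w)$ is only close to the identity, so deforming to a discrete faithful representation generically destroys the relations of $H$ rather than preserving them; there is no mechanism by which "every relation holding in $H$ already holds in a sufficiently nearby Fuchsian deformation." Second, a dense finitely generated subgroup of $\PSL(2,\mathbb{R})$ need not be abstractly isomorphic to any Fuchsian group at all, so the discrete faithful representations you want to deform to may simply not exist. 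Third, the appeal to compactness on the character variety and to a "Mal'cev-type residual finiteness argument to rule out an infinitely generated kernel of relations" is not an argument but a placeholder: residual finiteness of $H$ (which follows from Mal'cev since $H$ is finitely generated and linear) has no known implication for finite presentability. The dense case is precisely where the difficulty of the conjecture lies, and your proposal leaves it open.
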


\section{One-Relator Amalgamated Products and some Algebraic Consequences}

In this section we give versions and extensions of some of the results in \cite{Rohl}.

If $A,B\in\PSL(2,\mathbb{C})$ we say that the pair $\{A,B\}$ is \textit{irreducible} if $A,B$, regarded as linear fractional transformations, have no common fixed point, that is, $\tr[A,B]\neq 2$.

\begin{thm}[Freiheitssatz]\label{frei}Suppose $G=G_1\Asterisk_A G_2$ with $A=\langle a\rangle$ cyclic. Let $R\in G\setminus A$ be given in a reduced form $R=c_1d_1\cdots c_kd_k$ with $k\ge 1$ and $c_i\in G_i\setminus A$, $d_i\in G_2\setminus A$ for $i=1,\dots,k$. Assume that there exists a representation $\phi\colon G\to \PSL(2,\mathbb{C})$ such that $\phi|_{G_1}$ and $\phi|_{G_2}$ are faithful and the pairs $\{\phi(c_i),\phi(a)\}$ and $\{\phi(d_i),\phi(a)\}$are irreducible for $i=1,\dots,k$.

Then the group $H=G/N(R^m)$, $m\ge 2$, admits a representation $\rho\colon H\to \PSL(2,\mathbb{C})$ such that $G_1\to H\stackrel{\rho}{\to}\PSL(2,\mathbb{C})$ and $G_2\to H\stackrel{\rho}{\to}\PSL(2,\mathbb{C})$ are faithful and $\rho(R)$ has order $m$. In particular, $G_1\to H$ and $G_2\to H$ are injective.
\end{thm}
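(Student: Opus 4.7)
The plan is to construct $\rho$ as a one-parameter deformation of the given $\phi$, tuning the parameter so that $\rho(R)$ becomes elliptic of order exactly $m$ while $\rho|_{G_1}$ and $\rho|_{G_2}$ remain faithful. After conjugation we may put $\phi(a)$ in standard form inside its centralizer: if $\phi(a)$ is loxodromic or elliptic, take $\phi(a)=\mathrm{diag}(\lambda,\lambda^{-1})$ and $T(t)=\mathrm{diag}(t,t^{-1})$ for $t\in \mathbb{C}^{\ast}$; if $\phi(a)$ is parabolic, use the analogous unipotent $1$-parameter centralizer. Since $T(t)$ commutes with $\phi(a)$, the assignments $\phi_{t}|_{G_{1}}=\phi|_{G_{1}}$ and $\phi_{t}(g)=T(t)\phi(g)T(t)^{-1}$ for $g\in G_{2}$ agree on $A$ and hence give a well-defined representation $\phi_{t}\colon G\to \PSL(2,\mathbb{C})$. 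The restriction to $G_{1}$ is unchanged and the restriction to $G_{2}$ is a conjugate of the faithful $\phi|_{G_{2}}$, so both factor restrictions stay faithful for every $t$.

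\textbf{Non-constancy of the trace.} I then study $F(t)=\tr \phi_{t}(R)$ in detail in the loxodromic case (the parabolic case is analogous). The hypothesis that $\{\phi(c_{i}),\phi(a)\}$ and $\{\phi(d_{i}),\phi(a)\}$ are irreducible pairs implies, since the centralizer of a non-scalar diagonal matrix is the diagonal torus, that the off-diagonal entries $q_{i},r_{i}$ of $\phi(c_{i})$ and $\beta_{i},\gamma_{i}$ of $\phi(d_{i})$ are all non-zero. Conjugation by $T(t)$ multiplies the $(1,2)$-entry of $\phi(d_{i})$ by $t^{2}$ and the $(2,1)$-entry by $t^{-2}$, while leaving $\phi(c_{j})$ unchanged. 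Expanding
\[
\phi_{t}(R)=\phi(c_{1})\bigl(T(t)\phi(d_{1})T(t)^{-1}\bigr)\cdots \phi(c_{k})\bigl(T(t)\phi(d_{k})T(t)^{-1}\bigr)
\]
as a sum over index paths and tracking the coefficient of $t^{2k}$ in the $(2,2)$-entry, the only surviving contribution comes from selecting the $(1,2)$-entry at each conjugated factor and the $(2,1)$-entry at each $\phi(c_{j})$, giving value $r_{1}\cdots r_{k}\beta_{1}\cdots \beta_{k}\ne 0$; the $(1,1)$-entry admits no $t^{2k}$ contribution, so this is the leading coefficient of $F$. Hence $F(t)$ is a non-constant Laurent polynomial in $t^{2}$.

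\textbf{Finishing and the main obstacle.} Since $F$ is non-constant, the equation $F(t)=2\cos(\pi/m)$ has a solution $t_{0}\in \mathbb{C}^{\ast}$. For that $t_{0}$, $\phi_{t_{0}}(R)$ has eigenvalues $e^{\pm i\pi/m}$, so $\phi_{t_{0}}(R)^{m}=-I$ in $\mathrm{SL}(2,\mathbb{C})$ and is therefore trivial in $\PSL(2,\mathbb{C})$, while for $0<j<m$ its eigenvalues $e^{\pm ij\pi/m}$ are neither both $1$ nor both $-1$, so no smaller power is trivial; hence $\phi_{t_{0}}(R)$ has order exactly $m$. Setting $\rho=\phi_{t_{0}}$, the universal property of $H=G/N(R^{m})$ yields a factorization $\rho\colon H\to \PSL(2,\mathbb{C})$ with $\rho(R)$ of order $m$, and the compositions $G_{i}\to H\xrightarrow{\rho}\PSL(2,\mathbb{C})$ are faithful by the first paragraph, whence $G_{i}\to H$ is injective. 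The main technical obstacle is the non-constancy argument for $F(t)$: the alternating reduced-word structure and the irreducibility hypotheses must be combined to isolate a non-vanishing leading coefficient. Secondary care is needed to treat the parabolic case by the analogous unipotent deformation (with $\gamma_{i}\ne 0$ playing the role of the leading off-diagonal entry) and to confirm that the $\mathrm{SL}(2,\mathbb{C})\to \PSL(2,\mathbb{C})$ sign ambiguity does not disturb the order calculation.
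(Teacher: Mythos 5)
Your proposal is correct and follows essentially the same route as the paper: conjugate $G_2$ by a one-parameter family $T(t)$ in the centralizer of $\phi(a)$ (diagonal or unipotent according to the type of $\phi(a)$), observe that $f(t)=\tr\phi_t(R)$ is a Laurent polynomial whose extreme coefficients are non-zero by the irreducibility hypotheses, and solve $f(t)=2\cos(\pi/m)$ to make $\rho(R)$ of order exactly $m$. Your explicit identification of the leading coefficient as $r_1\cdots r_k\,\beta_1\cdots\beta_k$ merely fills in a detail the paper asserts without computation.
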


\begin{proof}Let $\phi\colon G\to \PSL(2,\mathbb{C})$ be the given representation of $G$ such that $\phi|_{G_1}$ and $\phi|_{G_2}$ are faithful and the pairs $\{\phi(c_i),\phi(a)\}$ and $\{\phi(d_i),\phi(a)\}$ are irreducible for $i=1,\dots, k$.

We may assume that $\phi(a)$ has the form $\phi(a)=
\begin{pmatrix}s&0\\0&s^{-1}
\end{pmatrix}$ or 
 $\phi(a)=
\begin{pmatrix}1&1\\0&1
\end{pmatrix}$.

Suppose first that $\phi(a)=
\begin{pmatrix}s&0\\0&s^{-1}
\end{pmatrix}$ and let $T=
\begin{pmatrix}t&0\\0&t^{-1}
\end{pmatrix}$ with $t$ a variable whose value in $\mathbb{C}$ is to be determined.

Define
\begin{itemize}
\item $\rho(h_1)=\phi(h_1)$ for $h_1\in G_1$ and
\item $\rho(h_2)=T\phi(h_2)T^{-1}$ for $h_2\in G_2$.
\end{itemize}
Since $T$ commutes with $\phi(a)$ for any $t$ the map $\rho\colon H\to\PSL(2,\mathbb{C})$ will define a representation with the desired properties if there exists a value $t$ such that $\rho(R)$ has order $m$.

Recall that a complex projective matrix $B$ in $\PSL(2,\mathbb{C})$ will have finite order $m\ge 2$ if $\tr B=\pm 2\cos (\frac{\pi}{m})$. As in the statement of the theorem assume $R=c_1d_1\cdots c_kd_k$ with $k\ge 1$ and $c_i\in G_1\setminus A$, $d_i\in G_2\setminus A$ for $i=1,\dots, k$, and assume that the pairs $\{\phi(c_i),\phi(a)\}$ and $\{\phi(d_i),\phi(a)\}$ are irreducible for $i=1,\dots,k$. Define
\[f(t)=\tr(\phi(c_1)T\phi(d_1)T^{-1}\cdots\phi(c_k)T\phi(d_k)T^{-1}),\] 
then $f(t)$ is a Laurent polynomial in $t$ of degree $2k$ in both $t$ and $t^{-1}$. The coefficients of $t^{2k}$ and $t^{-2k}$ are non-zero because the pairs $\{\phi(c_i),\phi(a)\}$ and $\{\phi(d_i),\phi(a)\}$ are irreducible.

Therefore by the fundamental theorem of algebra there exists a $t_0$ with $f(t_0)=2\cos\frac{\pi}{m}.$ With this choice of $t_0$ we have $\tr(\rho(R))=2\cos\frac{\pi}{m}$ and this $\rho(R)$ has order $m$. Therefore $\rho$ is a representation with the desired properties. Now assume $\phi(a)=\begin{pmatrix}1&1\\0&1\end{pmatrix}$. In this case, define $T=
\begin{pmatrix}1&t\\0&1
\end{pmatrix}$ with $t$ again a variable. Again $T$ commutes with $\rho(a)$ and the proof goes through analogously as above giving the desired representation.
\end{proof}

\begin{cor}\label{cor1}Let $G=G_1\Asterisk_A G_2$ be a group of F-type. Assume further that $n\ge 4$, $2\le p\le n-2$ and neither $U=U(a_1,\dots,a_p)$ nor $V=V(a_{p+1},\dots,a_n)$ is a proper power in the free product on the generators they involve.

Suppose that $UV$ involves all the generators and let $R=c_1d_1\cdots c_kd_k$ with $k\ge 1$ and $c_i\in\langle a_1,\dots,a_p\rangle\setminus\langle U\rangle$, $d_i\in\langle a_{p+1},\dots,a_n\rangle\setminus\langle V\rangle$ for $i=1,\dots,k$, and let $m\ge 2$. Then the conclusion of Theorem \ref{frei} holds for $H=G/N(R^m)$ with 
\[ G_1=\langle a_1,\dots, a_p\mid a_1^{e_1}=\dots=a_p^{e_p}=1\rangle,\]
\[ G_2=\langle a_{p+1},\dots, a_n\mid a_{p+1}^{e_{p+1}}=\dots=a_n^{e_n}=1\rangle,\]
and
\[A=\langle U^{-1}\rangle =\langle V\rangle.\]
\end{cor}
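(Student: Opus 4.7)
The plan is to apply Theorem \ref{frei} directly, with the faithful representation supplied by Theorem \ref{theo3} as input. Since $n\ge 4$ and $2\le p\le n-2$, both $G_1$ and $G_2$ are non-cyclic free products of cyclic groups, and since neither $U$ nor $V$ is a proper power, Theorem \ref{theo3} furnishes a faithful representation $\phi\colon G\to\PSL(2,\mathbb{C})$; in particular $\phi|_{G_1}$ and $\phi|_{G_2}$ are faithful. Every hypothesis of Theorem \ref{frei} is then in place except for the irreducibility of the pairs $\{\phi(c_i),\phi(a)\}$ and $\{\phi(d_i),\phi(a)\}$, so the task reduces to verifying these.

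To check irreducibility I would argue by contradiction. If $\{\phi(c_i),\phi(a)\}$ were reducible for some $i$, then $\phi(c_i)$ and $\phi(a)=\phi(U^{-1})$ would share a fixed point on $\mathbb{CP}^1$, so up to conjugation $\langle\phi(c_i),\phi(a)\rangle$ would sit inside the Borel subgroup of upper-triangular matrices and would in particular be solvable. Faithfulness of $\phi|_{G_1}$ then forces $\langle c_i,U\rangle\le G_1$ to be solvable as well. Here the algebraic structure of $G_1$ pays off: by the Kurosh subgroup theorem every subgroup of a free product of cyclic groups is again a free product of a free group with conjugates of subgroups of the factors, and such a group is solvable only if it is cyclic. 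Hence $\langle c_i,U\rangle=\langle w\rangle$ for some $w\in G_1$; since $U$ has infinite order and is not a proper power in $G_1$, the generator $w$ must in fact generate $\langle U\rangle$, forcing $c_i\in\langle U\rangle$ and contradicting $c_i\in G_1\setminus\langle U\rangle$. The analogous argument with $G_2$, $V$ in place of $G_1$, $U$ disposes of the pairs $\{\phi(d_i),\phi(a)\}$.

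With irreducibility in hand, a direct application of Theorem \ref{frei} produces a representation $\rho\colon H=G/N(R^m)\to\PSL(2,\mathbb{C})$ whose restrictions to the images of $G_1$ and $G_2$ are faithful and for which $\rho(R)$ has order $m$, which is exactly the conclusion claimed. The main obstacle is the passage from the geometric statement ``the pair is reducible in $\PSL(2,\mathbb{C})$'' back to the algebraic statement ``$c_i\in\langle U\rangle$''; this rests on combining (i) the standard fact that a reducible pair in $\PSL(2,\mathbb{C})$ generates a solvable linear subgroup, (ii) the Kurosh subgroup theorem to cut solvable subgroups of $G_1$ down to cyclic ones, and (iii) the no-proper-power hypothesis on $U$ and $V$ to then identify the resulting cyclic subgroup with $\langle U\rangle$ (respectively $\langle V\rangle$) itself.
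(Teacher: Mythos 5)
Your overall strategy coincides with the paper's: the paper's entire proof of this corollary consists of invoking Theorem \ref{theo3} and observing that the representation constructed there already makes the pairs $\{\phi(c_i),\phi(a)\}$ and $\{\phi(d_i),\phi(a)\}$ irreducible, so Theorem \ref{frei} applies. The gap is in your verification of irreducibility. The claim that a solvable subgroup of a free product of cyclic groups must be cyclic is false: $\mathbb{Z}_2\ast\mathbb{Z}_2$, the infinite dihedral group, is metabelian. By the Kurosh subgroup theorem the non-cyclic solvable subgroups of $G_1$ are precisely those isomorphic to $\mathbb{Z}_2\ast\mathbb{Z}_2$, and this case genuinely occurs under the hypotheses of the corollary, which exclude $U$ being a proper power but not $U$ being a product of two elements of order $2$ --- a distinction the paper itself is careful to draw in the remark on malnormality and in condition ($\dagger$) of Theorem \ref{theoA}. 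Concretely, with $e_1=e_2=2$ and $U=a_1a_2$ (not a proper power in $G_1$), the element $c_i=a_1\in G_1\setminus\langle U\rangle$ satisfies $\langle c_i,U\rangle\cong\mathbb{Z}_2\ast\mathbb{Z}_2$, which is solvable, so the contradiction you aim for never materializes.

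The conclusion is still true in that case, but for a reason your argument never touches: one must use that $\phi(U)$ is loxodromic (its trace $t_1+t_1^{-1}$ with $t_1$ transcendental is $\neq\pm2$). In the dihedral situation $c_i$ is a reflection and $U$ generates the translation subgroup (here the no-proper-power hypothesis enters), so $c_iUc_i^{-1}=U^{-1}$; then $\phi(c_i)$ permutes the two fixed points of $\phi(U)$, and it cannot fix both, since it would then commute with $\phi(U)$, contradicting $U\neq U^{-1}$. Hence it swaps them and shares no fixed point with $\phi(U)$. Note that solvability alone cannot settle this, because $\mathbb{Z}_2\ast\mathbb{Z}_2$ does embed into the Borel subgroup (send the translation generator to a parabolic); loxodromy of $\phi(U)$ is essential. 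Equivalently, one computes $[c_i,U]=U^{\mp 2}$ in this case, so $\tr[\phi(c_i),\phi(U)]=t_1^{2}+t_1^{-2}\neq 2$ since $t_1$ is transcendental. Once this extra case is handled, your argument is complete and matches the paper's.
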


\begin{proof}By Theorem \ref{theo3}, the group $G$ admits a faithful representation $\phi$ in $\PSL(2,\mathbb{C})$ such that the pairs $\{\phi(c_i),\phi(U)\}$ and $\{\phi(d_i),\phi(V)\}$ are irreducible for $i=1,\dots, k$.
\end{proof}

\begin{cor}Let $G$ be a group of F-type as in Corollary \ref{cor1} and $R$ the relator as in Corollary \ref{cor1}. Suppose $m\ge 8$. Then $H=G/N(R^m)$ is virtually torsion-free.
\end{cor}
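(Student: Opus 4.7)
The plan is to deduce virtual torsion-freeness of $H$ from the existence of an essentially faithful representation, together with a classification of torsion in $H$. Concretely, by Corollary \ref{cor1} the quotient $H=G/N(R^m)$ admits a representation $\rho\colon H\to\PSL(2,\mathbb{C})$ with the property that $\rho|_{G_1}$ and $\rho|_{G_2}$ are faithful and $\rho(R)$ has exact order $m$. Since $H$ is finitely presented, Proposition 1 reduces the problem to showing that $\rho$ is essentially faithful, that is, that $\ker(\rho)$ is torsion-free.

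The key ingredient is therefore a torsion theorem for the one-relator amalgamated product $H=G_1\Asterisk_A G_2/N(R^m)$ with $m\ge 8$: every element of finite order in $H$ is conjugate either into the image of $G_1$, into the image of $G_2$, or into the cyclic subgroup $\langle R\rangle$ generated by the image of $R$. This is the analogue for amalgamated products of the Newman spelling theorem for one-relator groups with torsion, and in this amalgamated setting it can be obtained either by Howie-tower techniques over the splitting $G=G_1\Asterisk_A G_2$ or by verifying that for $m\ge 8$ the symmetrized closure of $R^m$ satisfies a sufficient small-cancellation condition (e.g.\ $C(6)$, which is the origin of the numerical bound $m\ge 8$ once one accounts for pieces coming from $R$ itself). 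Given the irreducibility hypotheses inherited from Corollary~\ref{cor1}, no accidental cancellation is possible between $R$ and elements of $G_1$ or $G_2$, so the small-cancellation/tower hypotheses do apply.

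Once this classification of finite-order elements is in hand, the proof closes quickly. Suppose $h\in\ker(\rho)$ has finite order. Replacing $h$ by a conjugate, we may assume $h\in G_1$, $h\in G_2$, or $h=R^k$ for some integer $k$. In the first two cases faithfulness of $\rho|_{G_i}$ forces $h=1$. In the third case $\rho(h)=\rho(R)^k$, and because $\rho(R)$ has exact order $m$, we conclude $k\equiv 0\pmod m$, hence $h=R^k=1$ in $H$. Therefore $\ker(\rho)$ is torsion-free, $\rho$ is essentially faithful, and Proposition 1 yields that $H$ is virtually torsion-free.

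The main obstacle is the torsion-classification step: one must rule out the appearance of exotic finite-order elements in $H$ that are not conjugate into a factor or into $\langle R\rangle$. This is exactly where $m\ge 8$ enters. Once the relevant spelling/tower argument (or a clean citation to it) is in place, the rest is a short bookkeeping exercise combining Corollary~\ref{cor1} with Proposition 1, so the real content of the statement lies entirely in the torsion theorem for sufficiently high exponent one-relator amalgamated quotients.
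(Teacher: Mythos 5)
Your argument is correct and follows essentially the same route as the paper: invoke the torsion theorem for small cancellation products (with $m\ge 8$) to show every finite-order element of $H$ is conjugate into a factor or into $\langle R\rangle$, then use faithfulness of $\rho$ on the factors and the exact order $m$ of $\rho(R)$ to conclude the kernel is torsion-free, and finish with Proposition 1. The paper simply cites Lyndon--Schupp and Collins--Perraud for the torsion classification rather than sketching how it is proved.
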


\begin{proof}In $G$ any element of finite order is conjugate to a power of a generator $a_i$. Since $m\ge 8$ from the torsion theorem for small cancellation products of D.~Collins and F.~Perraud, see \cite{149} or \cite{Collins}, any element of finite order in $G$ is conjugate to a power of a generator or a power of $R$.

Since $\rho(R)$ has exact order $m$ an the representations of $G_1$ and $G_2$ are faithful it follows that (the constructed) $\rho$ is essentially faithful and therefore $G$ is virtually torsion-free.
\end{proof}

\begin{rem}Let $G$ be a group of F-type with the presentation \[G=G_1\Asterisk_A G_2\]
with $G_1\neq A\neq G_2$, \[ G_1=\langle a_1,\dots, a_p\mid a_1^{e_1}=\dots=a_p^{e_p}=1\rangle,\]
\[ G_2=\langle a_{p+1},\dots, a_n\mid a_{p+1}^{e_{p+1}}=\dots=a_n^{e_n}=1\rangle,\] $U=U(a_1,\dots,a_p)$, $V=(a_{p+1},\dots,a_n)$, and $A=\langle U^{-1}\rangle=\langle V\rangle$.

We call $G$ a \textit{special group of F-type} if $UV$ involves all the generators, $n\ge 4$, $2\le p\le n-2$, and neither $U$ nor $V$ is a proper power in the free product on the generators they involve.
\end{rem}

Now let $G$ be a special group of F-type and let $R\in G\setminus A$ be given in a reduced form $R=c_1d_1\cdots c_kd_k$ with $k\ge 1$ and $c_i\in G_1\setminus A$, $d_i\in G_2\setminus A$ for $i=1,\dots,k$.

Then we may apply the theory of one-relator quotients $H=G/N(R^m)$ with $m\ge 2$ as described in \cite{82} and the deficiency arguments in Section 2 analogously in this more general context.

If we mirror the respective proofs there, then we easily get the following.

\begin{thm}Let $G$ be a special group of F-type, $R$ the relator as in Corollary \ref{cor1}, and $H=G/N(R^m)$, $m\ge 2$. Then the following hold.
\begin{enumerate}
\item[1.] For $i=1,\dots, n$ let $\alpha_i=0$ if $e_i=0$ and $\alpha_i=\frac{1}{e_i}$, if $e_i\ge 2$. Then
\begin{enumerate}
\item[(i)] if $\sum_{i=1}^n\alpha_i+\frac{1}{m} \le n- 2$, then $H$ has a subgroup of finite index mapping homomorphically on $\mathbb{Z}$. In particular, $H$ is infinite.
\item[(ii)] if $\sum_{i=1}^n\alpha_i+\frac{1}{m}<n-2$, then $H$ has a subgroup of finite index mapping homomorphically onto a free group of rank 2. In particular, $H$ is SQ-universal.
\end{enumerate}
\item[2.] $H$ is a non-trivial free product with amalgamation.
\item[3.] If $n\ge 5$ or $n=4$ and at least one of the $e_i$ is not equal to 2, then $H$ has a free subgroup of rank 2.
\end{enumerate}
\end{thm}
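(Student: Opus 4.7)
The plan is to mirror the deficiency argument in the Section~2 proof of SQ-universality for groups of F-type, suitably enhanced by treating the new finite-order element $R$ (of order $m$ in $H$) as an additional cyclic factor. The starting point is Corollary~\ref{cor1}: a representation $\rho\colon H\to\PSL(2,\mathbb{C})$ with $\rho|_{G_1}$ and $\rho|_{G_2}$ faithful and $\rho(R)$ of exact order $m$. By Selberg's theorem the finitely generated linear group $\rho(H)$ has a torsion-free normal subgroup $N$ of finite index, so the composition $\pi_N\circ\rho\colon H\to\rho(H)/N$ is a homomorphism onto a finite group in which the images of $a_i$ and of $R$ still have orders $e_i$ and $m$ respectively.

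To exploit this, I introduce the enhanced free product of cyclics
\[\hat X=\langle a_1,\dots,a_n,r\mid a_1^{e_1}=\dots=a_n^{e_n}=r^m=1\rangle\]
together with the canonical epimorphism $\hat\varepsilon\colon\hat X\to H$, $a_i\mapsto a_i$, $r\mapsto R$, so that $H=\hat X/K''$ where $K''$ is the normal closure in $\hat X$ of $\{UV,\,rR^{-1}\}$. Put $\hat\psi=\pi_N\circ\rho\circ\hat\varepsilon$ and $Y=\ker\hat\psi$, a normal subgroup of finite index $\hat j=|\rho(H)/N|$ in $\hat X$. Since $N$ is torsion-free, $Y$ meets every cyclic free factor $\langle a_i\rangle$ and $\langle r\rangle$ trivially, so by the Kurosh subgroup theorem $Y$ is free of rank
\[\hat r=1+\hat j\Bigl(n-\sum_{i=1}^n\alpha_i-\tfrac{1}{m}\Bigr)\]
from Riemann--Hurwitz on the Fuchsian group $\hat X$. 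Because $K''\subset Y$, the image $Y/K''$ is a subgroup of finite index $\hat j$ in $H$, and Reidemeister--Schreier rewriting presents it on these $\hat r$ free generators with $2\hat j$ relators (one conjugate of each of $UV$ and $rR^{-1}$ per Schreier coset representative), whence deficiency
\[d=\hat r-2\hat j=1+\hat j\Bigl(n-2-\sum_{i=1}^n\alpha_i-\tfrac{1}{m}\Bigr).\]

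Parts 1(i) and 1(ii) now follow from standard deficiency-to-quotient implications, exactly as at the end of the Section~2 SQ-universality argument. When the inequality of 1(ii) holds strictly, the bracket is positive, so $d\ge 2$ for $\hat j$ sufficiently large, and the result from \cite{9} (via Baumslag--Pride) yields a finite-index subgroup of $H$ mapping onto a free group of rank two; this gives both (ii) and SQ-universality. For 1(i), one combines the same deficiency mechanism with the Euler-characteristic reading $\chi(H)=2-n+\sum\alpha_i+1/m$: the stated inequality guarantees a finite-index subgroup of positive deficiency (hence of positive first Betti number), producing the $\mathbb Z$-quotient, and in the residual ``near-Euclidean'' case $\chi(H)\ge 0$ one falls back on virtual abelianity of the finite-index torsion-free subgroup to extract the $\mathbb{Z}$-quotient directly. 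Part~2 is immediate from Corollary~\ref{cor1}: the Freiheitssatz ensures $G_1,G_2\to H$ are injective and $\langle U^{-1}\rangle=\langle V\rangle$ persists as a proper common cyclic subgroup, so the amalgamated splitting of $G=G_1\Asterisk_A G_2$ descends non-trivially to $H$. Part~3 follows by the same reduction strategy as in the Section~2 SQ-universality proof: for $n\ge 5$, or $n=4$ with some $e_i\ne 2$, one passes to an epimorphic image of $H$ (typically also killing $R^m$) landing in a group of F-type already known from Section~2 to possess a free subgroup of rank two, and pulls back.

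The main obstacle is the Reidemeister--Schreier bookkeeping: verifying that the $2\hat j$ conjugates by a Schreier transversal really constitute an independent system of normal generators of $K''$ inside the free group $Y$ (no redundancies modulo the free relations), and that the auxiliary generator $r$ inserted to extract the $1/m$ contribution to $\hat r$ does not over-count the relator $r^m$ when one converts back to the presentation on $a_1,\dots,a_n$. Once this is handled cleanly, the stated thresholds in part~1 reduce to the elementary sign analysis of the bracketed quantity $n-2-\sum\alpha_i-1/m$.
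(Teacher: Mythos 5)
Your overall route is the one the paper itself prescribes: the paper gives no detailed argument for this theorem, saying only that one should ``mirror'' the deficiency computation of Section~2 in the setting of the one-relator quotient $H=G/N(R^m)$, with the details in \cite{Rohl}. Your bookkeeping with the auxiliary free factor $\langle r\mid r^m=1\rangle$ and the extra relator $rR^{-1}$ is a legitimate way to extract the $\frac{1}{m}$ term, and it produces the correct deficiency $d=\hat r-2\hat j=1+\hat j\bigl(n-2-\sum_{i=1}^n\alpha_i-\frac{1}{m}\bigr)$. Two small remarks: since the images of $a_i$ and $R$ in the finite quotient have orders exactly $e_i$ and $m$, each $e_i$ and $m$ divides $\hat j$, so $\hat j\bigl(n-2-\sum\alpha_i-\frac1m\bigr)$ is an integer and under the hypothesis of 1(ii) is automatically $\ge 1$; no ``$\hat j$ sufficiently large'' is needed. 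Also, the ``main obstacle'' you flag is not one: a deficiency lower bound only requires an \emph{upper} bound on the number of relators, so possible redundancies among the $2\hat j$ rewritten conjugates can only help. With that, 1(ii) follows from \cite{9} exactly as in Section~2, and your treatment of part~3 (pass to a quotient with a free subgroup of rank $2$ and use that a group surjecting onto $F_2$ contains $F_2$) is the paper's Section~2 strategy.

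The genuine gap is in part 1(i). Its hypothesis is $\sum_{i=1}^n\alpha_i+\frac{1}{m}\ge 2$, which is \emph{not} the inequality $\sum_{i=1}^n\alpha_i+\frac{1}{m}\le n-2$ that would make your deficiency non-negative; for $n=4$ with all $e_i=2$ (a case squarely covered by 1(i)) one gets $d=1-\hat j/m\le 0$, so ``positive deficiency, hence positive first Betti number'' fails outright. Your fallback --- that when $\chi(H)\ge 0$ one ``falls back on virtual abelianity'' --- is asserted, not proved: nothing you have established shows that a finite-index torsion-free subgroup of $H$ is abelian or surjects onto $\mathbb{Z}$. These boundary cases need a separate argument. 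For instance, when $n=4$ and all $e_i=2$, the special F-type hypotheses force $U=(a_1a_2)^{\pm1}$ and $V=(a_3a_4)^{\pm1}$, so $G$ is the Euclidean $(2,2,2,2)$-group, which is virtually $\mathbb{Z}^2$; since $H$ is infinite (by part~2), its finite-index image of $\mathbb{Z}^2$ is an infinite finitely generated abelian group and hence maps onto $\mathbb{Z}$. The remaining configurations with $\sum\alpha_i+\frac1m\ge 2$ require a similar explicit case analysis (this is precisely the kind of supplement the Section~2 proof performs for $n=4$, all $e_i=2$, and which \cite{Rohl} carries out here). Relatedly, your one-line justification of part~2 is too thin: injectivity of $G_1,G_2\to H$ from Corollary 5.2 does not by itself exhibit $H$ as a non-trivial amalgamated product, since $N(R^m)$ is not visible in the splitting of $G$; an actual decomposition of $H$ must be constructed, again as in \cite{Rohl}.
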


The details can be found in \cite{Rohl}.

We have considered special groups $G$ of F-type. We can make similar calculations for the hyperbolic groups of F-type which also have a faithful representation on $\PSL(2,\mathbb{C})$.

If now, for instance, $V=V_1^q$, $q\ge 2$, $V_1$ not a proper power in $\langle a_{p+q},\dots,a_n\rangle$, one has to be careful with the relator $R=c_1d_1\cdots c_kd_k$ for $k\ge 1$ where $c_i\in\langle a_1,\dots, a_p\rangle\setminus\langle U\rangle$ and $d_i\in\langle a_{p+1},\dots, a_n\rangle\setminus \langle V\rangle$ for $i=1,\dots,k$; $G$ admits a faithful representation $\phi$ in $\PSL(2,\mathbb{C})$ but then a pair $\{\phi(d_i),\phi (V)\}$ is not irreducible if $d_i=V_1^t$ with $q\nmid t$. But everything goes through analogously if we consider only relators $R$ of the form as above with $d_i\in\langle a_{p+1},\dots,a_n\rangle\setminus\langle V_1\rangle$.

\end{document}